\definecolor{webgreen}{rgb}{0,.5,0}
\definecolor{webbrown}{rgb}{.6,0,0}
\newtheorem{theorem}{Theorem}
\newtheorem{example}{Example}
\begin{document}
\begin{center}
{\LARGE\bf Ramanujan-Type Series Related to
 Clausen's Product}

\vspace{.25in}
\large 
John M. Campbell\\York University\\
Toronto, ON \\
Canada\\
\href{mailto:maxwell8@yorku.ca}{\tt maxwell8@yorku.ca}
\end{center}

\begin{abstract}
  Infinite series are evaluated through the manipulation of a series for $\cos(2t \sin^{-1}x)$ resulting from Clausen's Product.
  Hypergeometric series equal to an expression involving  $\frac{1}{\pi}$ are determined. Techniques to evaluate generalized hypergeometric
 series are discussed through perspectives of experimental mathematics. 
\end{abstract}

\section{Introduction}

  Clausen's product is discussed in the fascinating book by Borwein, Bailey, and Girgensohn, \emph{Experimentation in Mathematics: Computational Paths to Discovery}
 \cite{bbg}. New Ramanujan-type series derived from Clausen's product such as the following are given in this paper: 
$\sum_{n=2}^{\infty}   (  \frac{1}{4})^{3n-1}   \frac{(2n-3)!(4n+3)!}{((2n)!(n+1)(2n+1))^{2}(n-2)!n!}$; the following section ({\bf Ramanujan-Type Series})
 explains the derivation of this series and a related series. We discuss the evaluation of hypergeometric series through integral substitution. For example, we shall
 soon consider an infinite array of hypergeometric series of the form  $_{3}   F _{2}    (\frac{1}{2},  \frac{2k}{n},\frac{2n-2k}{n};\frac{3}{2},2;1)$. Furthermore,
 we discuss methods to incorporate the Riemann-zeta function into hypergeometric-like series, and discuss related new hypergeometric generalizations, evaluating
hypergeometric series such as $_{6}   F _{5}    (1,1,\frac{9}{4},\frac{5}{2},\frac{11}{4},3;2,\frac{37}{16},\frac{41}{16},\frac{45}{16},\frac{49}{16};1)$.

  We are interested in the evaluation of infinite series. There has been much work discussing hypergeometric identities of the form 
$_{p}   F _{q}  (a_{1},a_{2},...,a_{p};b_{1},b_{2},...,b_{q};x)$ for constant $x$ such that $a_{1}$,$a_{2}$,...,$a_{p}$ and/or $b_{1}$,$b_{2}$,...,$b_{q}$ contain a
 variable/variables. There does not seem to be as much work on hypergeometric series of the form $_{p}   F _{q}  (a_{1},a_{2},...,a_{p};b_{1},b_{2},...,b_{q};x)$
 for variable $x$, which can be manipulated through integration with respect to $x$ (and by replacing $x$ by $f(x)$ and integrating). For example, only series of
 the form $_{p}   F _{q}  (a_{1},a_{2},...,a_{p};b_{1},b_{2},...,b_{q};x)$ for constant $x$ are discussed in \cite{ira}  (which is cited in \cite{ab}). 

 ``Common", modern computational software is usually unable to directly evaluate hypergeometric series of the form 
$_{p}   F _{q}  (a_{1},a_{2},...,a_{p};b_{1},b_{2},...,b_{q};x)$ for variable $x$; we thus consider the evaluation of such series to be important. Consider infinite
 series of the form $\sum g(n) (1-x^a)^{bn} x^{dn} $, where $g(n)$ represents a combination of a finite number of elementary functions (and $a$, $b$, and $d$ are
 positive integers). Integration of the expression equal to such series is often very difficult, yielding complicated results, some of which we shall here present.

  The evaluation of a series such as $\sum_{k=0}^{\infty} \frac{4^k}    {\binom{2k}{k}(2k+1)(k+1)^{2}}$ is an example of a tangible approach to exploring series
 representations of important constants through integral substitution. Although the theorems below have somewhat more cumbersome derivations, briefly describing our
 evaluation of the aforementioned series will hopefully serve as a straightforward introduction to this paper. The following well-known and important series (for
 $x^{2} \leq 1$) is given in \cite{gradsh}: $ (\sin^{-1} x )^{2} = \sum_{k=0}^{\infty}  \frac{4^{k} (k!)^{2}  x^{2k+2}}  {(2k+1)!(k+1)}$. Divide both sides of this
 equation by $x$ and integrate:

\begin{equation*}
\int \frac{(\sin^{-1} x)^2}{x}    dx  =    \frac{\operatorname{Li}_{3}(e^{2i \sin^{-1}x})}{2}  + i \sin^{-1}x\operatorname{Li}_{2}(e^{2i \sin^{-1}x}) + \frac{1}{3} i (\sin^{-1}  x)^{3}   +  (\sin^{-1}  x)^{2}  \ln (1-e^{2i \sin^{-1}x})
\end{equation*}

  Give the above integral an upper limit of one and a lower limit of zero, resulting in an integral evaluated in \cite{jchoi} (described as ``an interesting definite
integral"). It is now clear that $\sum_{k=0}^{\infty} \frac{4^k}    {\binom{2k}{k}(2k+1)(k+1)^{2}} = \frac{\pi^2}{2} \ln (2) - \frac{7 \zeta (3)}{4}$. The following
related series is elegantly proven in \cite{borweinchamber}: $\sum_{n=1}^{\infty}  \frac{4^n}{n^{3}\binom{2n}{n}} = \pi^{2} \ln(2) - \frac{7 \zeta (3)}{2}$.

  Let us continue by providing the infinite array of $_{3} F _{2}$ series we have in mind, which is based upon Clausen's product. Consider:

\begin{equation*}
(_{2}   F _{1}    (\frac{k}{n},\frac{n-k}{n};\frac{3}{2};\sin ^{2}  x))^2
     = (  \frac{      \csc x \sin      (      (-1 + 2 (1 - \frac{k}{n})) \sin   ^{-1  }  (\sin x)      )   }{-1 + 2 (1 - \frac{k}{n})}     ) ^2
\end{equation*}

\begin{theorem}%
\begin{eqnarray*}
\lefteqn{\int_{0}^{\frac{\pi}{2}}  (  \frac{      \csc x \sin                   (      (-1 + 2 (1 - \frac{k}{n})) \sin   ^{-1  }  (\sin x)      )   }{-1 + 2 (1 - \frac{k}{n})}     ) ^2   dx  = }    \\
  &  &   -  \frac{1}{4 (-2 k + n)}    i e^{ \frac{2 i k \pi}{n}   }   n (  \frac{n}{2 k - n}  +   \frac{e^{\frac{4 i k \pi}{n}} n}{2 k - n}     -     2 e^{ \frac{2 i k \pi}{n} } \pi \cot ( \frac{2 k \pi}{n}) -        \psi_{0} (\frac{1}{2} - \frac{k}{n}) + \psi_{0}( 1 - \frac{k}{n})    \\ 
  &  &   +     e^{\frac{4 i k \pi}{n}} \psi_{0} (-\frac{1}{2} + \frac{k}{n}) -     e^{\frac{4 i k \pi}{n}}    \psi_{0}(\frac{k}{n}))  
\end{eqnarray*}

$=$

\begin{equation*}
 _{3}   F _{2}    (\frac{1}{2},  \frac{2k}{n},\frac{2n-2k}{n};\frac{3}{2},2;1)  \frac{\pi}{2}
\end{equation*}
\end{theorem}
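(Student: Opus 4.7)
The plan is to prove both claimed equalities by evaluating the integral
\[
I_{k,n} = \int_{0}^{\pi/2} \left( \frac{\csc x \, \sin((1-2k/n)\sin^{-1}(\sin x))}{1 - 2k/n}\right)^{2} dx
\]
in two independent ways. The second equality (the $\tfrac{\pi}{2}\cdot {}_{3}F_{2}$ expression) follows immediately by combining the identity displayed just before the theorem with Clausen's product. Setting $a = k/n$ and $b = (n-k)/n$ in Clausen's formula $({}_{2}F_{1}(a,b;a+b+\tfrac{1}{2};z))^{2} = {}_{3}F_{2}(2a,2b,a+b;2a+2b,a+b+\tfrac{1}{2};z)$ gives
\[
\bigl({}_{2}F_{1}(k/n,(n-k)/n;\tfrac{3}{2};z)\bigr)^{2} = {}_{3}F_{2}(1,\,2k/n,\,(2n-2k)/n;\,\tfrac{3}{2},\,2;\,z),
\]
which, via the identity above the theorem, is exactly the integrand with $z=\sin^{2}x$. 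Swapping sum and integral and applying the Wallis formula $\int_{0}^{\pi/2}\sin^{2m}x\,dx = \tfrac{\pi}{2}(1/2)_{m}/m!$ absorbs the $(1/2)_{m}$ into the list of numerator parameters (while $(1)_{m}/m! = 1$ disappears), producing precisely $\tfrac{\pi}{2}\cdot {}_{3}F_{2}(\tfrac{1}{2}, 2k/n, (2n-2k)/n; \tfrac{3}{2}, 2; 1)$.

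For the closed-form equality I would evaluate $I_{k,n}$ directly. On $[0,\pi/2]$ we have $\sin^{-1}(\sin x)=x$, so writing $\alpha = 1 - 2k/n$ reduces the task to $I_{k,n} = \alpha^{-2}\int_{0}^{\pi/2}\sin^{2}(\alpha x)/\sin^{2}x\,dx$. Integrating by parts with $u=\sin^{2}(\alpha x)$ and $dv=\csc^{2}x\,dx$ kills both endpoints (since $\cot(\pi/2)=0$ and $\sin^{2}(\alpha x)\cot x = O(x)$ as $x\to 0^{+}$) and leaves
\[
\int_{0}^{\pi/2}\frac{\sin^{2}(\alpha x)}{\sin^{2} x}\,dx = \alpha \int_{0}^{\pi/2}\sin(2\alpha x)\cot x\,dx.
\]
The remaining integral is handled by writing $\sin(2\alpha x) = (e^{2i\alpha x}-e^{-2i\alpha x})/(2i)$ and using the expansion $\cot x = i(e^{2ix}+1)/(e^{2ix}-1)$ expanded as a geometric series in $e^{2ix}$ (Abel-summed, or interpreted through a small upper-half-plane deformation). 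Each resulting integral $\int_{0}^{\pi/2} e^{i\lambda x}dx$ is elementary, and regrouping the geometric sums in the summation index recovers digamma values at the four arguments $k/n,\;1-k/n,\;\tfrac{1}{2}\pm k/n$; the factors $e^{\pm 2ik\pi/n}$ and $e^{\pm 4ik\pi/n}$ in the theorem come from $e^{i\lambda\pi/2}$ at $\lambda = \pm 2\alpha - 2m$, while the $\pi\cot(2k\pi/n)$ term arises from the reflection formula $\psi_{0}(1-z)-\psi_{0}(z) = \pi\cot(\pi z)$ applied to pair up the digamma values that share a common residue class modulo $1$.

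The main obstacle is the bookkeeping in this last step. The Clausen--Wallis step is essentially mechanical, and the integration by parts is standard, but assembling the four geometric series into precisely the four $\psi_{0}$-values stated in the theorem, and checking that the rational prefactors $n/(2k-n)+e^{4ik\pi/n}n/(2k-n)$ emerge correctly from the divergent parts that the Abel summation regularizes (these cancel the $\sum_{m}$-independent boundary contributions), is the delicate part. In the experimental-mathematics spirit of the paper, the closed form is easy to verify numerically and symbolically in a computer algebra system at many rational values of $k/n$, which provides strong confirmation alongside the symbolic derivation.
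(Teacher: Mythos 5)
Your proposal is sound and considerably more substantive than the paper's own proof, which consists of the single sentence ``Given Clausen's product, the above theorem holds.'' For the second equality your route is exactly the one the paper intends but does not carry out: Clausen's formula with $a=k/n$, $b=(n-k)/n$ (so $a+b=1$, $a+b+\tfrac12=\tfrac32$, $2a+2b=2$) turns the squared ${}_2F_1$ into ${}_3F_2(1,\tfrac{2k}{n},\tfrac{2n-2k}{n};\tfrac32,2;\sin^2x)$, and termwise integration with the Wallis formula (equation (\ref{eq:extremelybeautiful}) of the paper) trades $(1)_m/m!$ for $(\tfrac12)_m/m!$, yielding $\tfrac{\pi}{2}\,{}_3F_2(\tfrac12,\tfrac{2k}{n},\tfrac{2n-2k}{n};\tfrac32,2;1)$; the interchange is justified by positivity of the terms for $0<k<n$. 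For the first equality --- the digamma/cotangent closed form --- the paper offers nothing at all, and your plan (reduce to $\alpha^{-2}\int_0^{\pi/2}\sin^2(\alpha x)\csc^2x\,dx$, integrate by parts to $\alpha^{-1}\int_0^{\pi/2}\sin(2\alpha x)\cot x\,dx$, expand $\cot x$ as a regularized geometric series in $e^{2ix}$, and resum into $\psi_0$ values) is a legitimate and standard way to obtain it. The one genuine weakness is that you stop short of executing the final bookkeeping: you do not actually verify that the four digamma arguments, the exponential prefactors, and the rational terms $\tfrac{n}{2k-n}+\tfrac{e^{4ik\pi/n}n}{2k-n}$ emerge in exactly the combination printed in the theorem, and since the paper's displayed closed form is intricate enough that a transcription error is quite possible, that verification (symbolic or at several rational $k/n$) is not optional window dressing but the step that would actually certify the first equality. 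As written, your argument fully establishes the integral-equals-${}_3F_2$ statement and gives a credible but incomplete derivation of the explicit closed form.
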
%

\begin{proof}

  Given Clausen's product, the above theorem holds.

\end{proof}

  Perhaps it is probable that the above theorem has been considered before.

   Integrating    $ ( _{2} F _{1}    (  2 - \frac{k}{n}  , \frac{k}{n}  ;  \frac{5}{2}     ;    \sin ^{2} x  ))^2$      proves to be much more cumbersome, involving
 multifarious hypergeometric series.

  Now, let us briefly discuss some more simple results.

  Clausen's product can be used to obtain the following relationships \cite{bbg}:

\begin{equation}
\label{eq:cos2t}
\sum_{n=0}^{\infty} \frac{(t)_{n}(-t)_{n}}    {(2n)!} (2x)^{2n} = \cos(2t \sin^{-1}x)
\end{equation}

\begin{equation*}
-\frac{1}{2}  \sum_{n=1}^{\infty}   \frac{(t)_{n}(-t)_{n}}   {(2n)!}  ( 4\sin^{2} x)^{n}  =  \sin^{2} (tx)
\end{equation*}

  A generalization of (\ref{eq:cos2t}) which does not involve the Pochhammer symbol is given in \cite{jolley}.

  Consider integrating (\ref{eq:cos2t})  for $\cos(\sin^{-1}(\sqrt{x}))$:

\begin{equation}
\label{eq:2/3}
\sum_{n=0}^{\infty}   \frac{(\frac{1}{2})_{n}(-\frac{1}{2})_{n}}    {(2n)!(n+1)} 4^{n} x^{n+1} = -\frac{2}{3} (1-x)^{\frac{3}{2}}+\frac{2}{3}
\end{equation}

Consider integrating (\ref{eq:2/3}):

\begin{equation}
\label{eq:2/5}
\sum_{n=0}^{\infty}    \frac{(\frac{1}{2})_{n}(-\frac{1}{2})_{n}}        {(2n)!(n+1)(n+2)}   4^{n} x^{n+2}
=-\frac{2}{3}   ( - \frac{2}{5}   (1-x)^{\frac{5}{2}}   -x        )    - \frac{4}{15}
\end{equation}

  (\ref{eq:2/3}) can be transformed as follows:

\begin{equation}
\label{eq:5/8}  
\sum_{n=0}^{\infty}    \frac{(\frac{1}{2})_{n}(-\frac{1}{2})_{n}}    {(2n)!(n+1)(2n+3)}   4^{n} x^{2n+3}
=-\frac{2}{3}    ( \sqrt{1-x^2}  (\frac{5x}{8}-\frac{x^3}{4})  -x+\frac{3}{8} \sin^{-1} x     )
\end{equation}

  Using such techniques, it is not difficult to evaluate series such as $\sum_{n=0}^{\infty}     \frac{(\frac{1}{6})_{n}(-\frac{1}{6})_{n}}       {(2n)!(2n+3)}$
or  $\sum_{n=0}^{\infty}   \frac{(\frac{1}{8})_{n}(-\frac{1}{8})_{n}}        {(2n)!(n+2)}    $ . Simple variations of series  such as (\ref{eq:2/3})  often lead to
 outlandish results.

  Consider multiplying both sides of (\ref{eq:5/8}) by $x$ and integrating:

\begin{equation}
\label{eq:144}  
\sum_{n=0}^{\infty}    \frac{(\frac{1}{2})_{n}(-\frac{1}{2})_{n}}       {(2n)!(n+1)(2n+3)(2n+5)}   4^{n} x^{2n+5}
=\frac{1}{144}    (  32 x^{3}  -18 x^{2}    \sin^{-1}  (x)    +\sqrt{1-x^2}   (4x^{2} -16x^{2} -3)x + 3 \sin^{-1}  x   )   
\end{equation}

  The Ramanujan-type series which follow are proven using the following very important definite integral:

\begin{equation}
\label{eq:extremelybeautiful}  
\int_{0}^{\frac{\pi}{2}}   \sin^{2n} t dt = \frac{\pi}{2}   \frac{(\frac{1}{2})_n}{n!}
\end{equation}

  Consider substituting (\ref{eq:extremelybeautiful}) into (\ref{eq:cos2t}) and variations of (\ref{eq:cos2t}), yielding fairly simple series, the partial sums of
 many of which are known. For example consider the partial sums of 
$\sum_{n=0}^{\infty}   4^{n}   \frac{(\frac{1}{32})_{n}(-\frac{1}{32})_{n}(\frac{1}{2})_{n}}   {n!(2n)!}$ or
 $ \sum_{n=2}^{\infty}  ( \frac{1}{4})^{2n-1}    \frac{\binom{2n-3}{n-1}\binom{2n+1}{n}}        {n(n+1)(n+2)}$.

  Relationships such as (\ref{eq:144}) can be manipulated using (\ref{eq:extremelybeautiful}), resulting in series which are perhaps new;
even if such series are not \emph{outright} new, some computational software is unable to evaluate such series, and they do not appear in
authoritative mathematical literature on the topic.

 The following integral is worthy of mention:

\begin{equation}
\label{eq:alsobeautiful}  
\int_{0}^{\pi} x \sin^{n} x dx = \frac{\pi^{\frac{3}{2}} \Gamma (\frac{n+1}{2})   }      {2  \Gamma (1+ \frac{n}{2})}
\end{equation}

  Now, consider using relationships such as the following to manipulate series such as  (\ref{eq:2/3}):

\begin{equation}
\label{eq:23vari}  
\int_{0}^{\pi}   x (  \frac{2}{3} - \frac{2}{3}   (1 - \sin x)^{\frac{3}{2}})   dx  =  \frac{\pi}{9}   (20-16 \sqrt{2} + 3 \pi)
\end{equation}

  Let us begin by  acknowledging that it seems to us that the following relationship is highly important, at least in terms of the evaluation
of hypergeommetric series:

\begin{equation*}
\sum_{n=1}^{\infty} (((1 - x^a)^{(b  n) + c}) (n + g)^{d} f^n = f (1 - x^a)^{b + c} \Phi(f (1 - x^a)^{b}, -d, 1 + g)
\end{equation*}

 So, the above series can be transformed into a hypergeometric series. If  $\operatorname{Re} (a) > 0 $ and  $\operatorname{Re}(c + b n) > -1$, then:

\begin{equation}
\label{eq:notgeneral}  
\int_{0}^{1} (((1 - x^a)^{(b  n) + c}) (n + g)^{d} f^n   dx   =  f^{n} (g + n)^{d}   \frac{\Gamma(1 + \frac{1}{a}) \Gamma(1 + c + b n)}{\Gamma(1 + \frac{1}{a} + c + b n)} 
\end{equation}%

  The above relationships lead to an extremely important question, which is author is presently unable to answer:
  {\bf Is it possible to determine a meaningful, general evaluation of the following integral?}

\begin{equation*}
\int_{0}^{1}  f (1 - x^a)^{b + c} \Phi(f (1 - x^a)^{b}, -d, 1 + g) dx
\end{equation*}%

  The above integral leads to very beautiful evaluations of hypergeometric series.

 Questions related to the above question such as the following arise:   ``Is it possible to meaningfully evaluate $\int_{0}^{\frac{\pi}{2}}  \sin^{-1} \sin^{2} x   dx$?"

  Given that  $\sum_{n=2}^{\infty} \frac{(1-x^a)^{bn}}{c^n} = -    \frac{(1 - x^a)^{2 b}  }{c (-c + (1 - x^a)^b)}$ ,
is it possible to determine a meaningful, general evaluation of $\int_{0}^{1} -    \frac{(1 - x^a)^{2 b}  }{c (-c + (1 - x^a)^b)}         dx $? ``General" evaluations of this integral for only a \emph{single} variable are
very complicated: consider {\bf Theorem  17}.

  Given that $\sum_{n=2}^{\infty} \frac{(1-x^a)^{bn}}{n^c} = -(1 - x^a)^{b} + \operatorname{Li}_{c}((1 - x^a)^b)$, is it possible to determine a meaningful, general
 evaluation of $\int_{0}^{1} -(1 - x^a)^{b} + \operatorname{Li}_{c}((1 - x^a)^b)        dx $?  These questions seem to us to be highly important. In \cite{weiss} it
 is stated that ``No general algorithm is known for integration of polylogarithms of functions."  Such general series can be used to evaluate generalized
 hypergeometric series $_{p} F _{q}$ for relatively arbitrary $p$ and $q$.

  More generally, compared to  (\ref{eq:notgeneral}),  we are interested in integrals of the form $\int^p_q (f(x))^{n} g(x) dx$ which
 are equal to a finite number of combinations of elementary functions, where $f(x)$ and $g(x)$ represent a finite number of combinations of elementary functions
 such that ${n} \in \mathbb{Z}^+$;  such integrals can be used to establish new hypergeometric series. Let $h(x) = (f(x))^{n} g(x)$. Consider the following
 straightforward two steps:%

1) If possible, evaluate $\sum_{n=1}^{\infty} h(x)j(n)$, where $j(n)$ represents a finite combination of elementary functions (e.g. $j(n)=\frac{n^{45}+1}{n+2}$)%

2) Integrate the above sum (i.e. using $\int^p_{q} h(x) dx$) and its equivalent expression%

  Let the above simple two-step procedure be called the HJ-algorithm.
  In this paper, we present some general tendencies of the HJ-algorithm, through perspectives of experimental mathematics.
As stated in the excellent \emph{Mathematics by Experiment: Plausible Reasoning in the 21st Century} \cite{bo}, \begin{quote} The computer provides the mathematician
with a ``laboratory" in which he or she can perform experiments: analyzing examples, testing out new ideas, or searching for patterns.\end{quote}

  Most of the theorems in this paper are not given rigorous analyses; indeed, most of the series are presented for their own sake. Although the ideas (i.e. theorems)
 being ``tested" in this paper are new in the sense that they may not have been previously published or widely recognized, the HJ-algorithm is so simple that it can
hardly be described as a ``new idea". Finally, although we present some seemingly unpredictable tendencies of the HJ-algorithm, we presently do not claim to
have considerable knowledge of general patterns in the hypergeometric series. Indeed, some modern computational software is unable to evaluate 
extremely important integrals such as $\int_0^{1} \frac{(1 - x^a)^b}{-c + (1 - x^a)^b} dx$.

  Many of the evaluations of the hypergeometric series of the forms
we analyze in this paper involve special functions such as the polylogarithm function; for the time being, we mostly discuss hypergeometric series which have a
 ``nice" (although often long and intricate) form consisting  of a finite number of combinations of elementary functions; evaluating an infinite series (i.e.
 a hypergeometric series) using another infinite series (i.e. the polylogarithm function) seems somwhat redundant. Having said that, we present a way to incorporate
 the Riemann-zeta function into certain forms of hypergeometric series.
 
  We will now present a variety of infinite series.   {\bf Theorem 2} is particularly worthy of note, because it is most likely to be original, and does not
 seem to be equal to a single hypergeometric function.

\section{Ramanujan-Type Series}

\begin{theorem}%
\begin{equation}
\sum_{n=2}^{\infty}   (  \frac{1}{4})^{3n-1}   \frac{(2n-3)!(4n+3)!}{((2n)!(n+1)(2n+1))^{2}(n-2)!n!}  =  -\frac{553}{96} - \frac{42 \ln(\frac{2-\sqrt{2}}{2+\sqrt{2}}) -34\sqrt{8}}             {9\pi}
\end{equation}
\end{theorem}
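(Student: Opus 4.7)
First simplify the summand: $(2n-3)!/((n-2)!\,n!) = \binom{2n-2}{n-1}/(2n)$ and $(4n+3)!/(((2n)!(n+1)(2n+1))^{2}) = 2\binom{4n+3}{2n+1}/(n+1)$, so that
\[
a_{n} \;=\; \frac{4\,\binom{2n-2}{n-1}\binom{4n+3}{2n+1}}{4^{3n}\,n(n+1)}\qquad(n\geq 2).
\]
Write $\binom{4n+3}{2n+1} = \tfrac{4n+3}{2(n+1)}\binom{4n+2}{2n+1}$ and apply the Wallis identity (\ref{eq:extremelybeautiful}) in the form
$\binom{4n+2}{2n+1}/4^{2n+1} = (1/2)_{2n+1}/(2n+1)! = (2/\pi)\int_{0}^{\pi/2}\sin^{4n+2}t\,dt$,
to obtain
\[
a_{n} \;=\; \frac{16\,(4n+3)\,\binom{2n-2}{n-1}}{\pi\,4^{n}\,n(n+1)^{2}}\int_{0}^{\pi/2}\sin^{4n+2}t\,dt.
\]

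Now interchange sum and integral. Setting $u = \sin^{4}t$ and pulling $\sin^{2}t$ out of $\sin^{4n+2}t$,
\[
\sum_{n\geq 2}a_{n} \;=\; \frac{4}{\pi}\int_{0}^{\pi/2}\sin^{2}t\cdot S(\sin^{4}t)\,dt,\quad S(u) := \sum_{n\geq 2}\frac{(4n+3)\,\binom{2n-2}{n-1}}{4^{n-1}\,n(n+1)^{2}}\,u^{n}.
\]
The partial fractions $\tfrac{4n+3}{n(n+1)^{2}} = \tfrac{3}{n} - \tfrac{3}{n+1} + \tfrac{1}{(n+1)^{2}}$ split $S(u)$ into three pieces, each of which is an iterated antiderivative (in $u$) of the central-binomial generating function $\sum_{n\geq 1}\binom{2n-2}{n-1}u^{n}/4^{n-1} = u/\sqrt{1-u}$. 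Subtracting the $n=1$ contribution from each sub-series (the theorem's sum begins at $n=2$) expresses $S(u)$ as an elementary combination of $\sqrt{1-u}$, $(1-u)^{3/2}$, $\ln(1+\sqrt{1-u})$, and rational functions of $u$.

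Finally, evaluate the remaining integral in $t$. Because $\sqrt{1-\sin^{4}t} = \cos t\sqrt{1+\sin^{2}t}$, the substitution $w = \sin t$ reduces the integrand to a finite combination of $\int_{0}^{1} w^{a}(1+w^{2})^{b/2}\,dw$ and $\int_{0}^{1} w^{c}\ln\bigl(1+\sqrt{1+w^{2}}\bigr)\,dw$; a further substitution $w = \sinh\theta$ makes each of these elementary. The key boundary data $\sinh^{-1}(1) = \ln(1+\sqrt 2) = -\tfrac{1}{2}\ln\tfrac{2-\sqrt 2}{2+\sqrt 2}$ and $\sinh\bigl(2\sinh^{-1}(1)\bigr) = 2\sqrt 2 = \sqrt 8$ reproduce exactly the transcendental part of the stated right-hand side.

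The main obstacle is the bookkeeping at the end. The two sub-series of $S(u)$ coming from $-3/(n+1)$ and $1/(n+1)^{2}$ carry explicit $1/u$ factors which are individually singular at $u=0$ but cancel in the full sum $S(u)$; they must therefore be carried through the integration as a combined integrand rather than split term-by-term, or spurious divergences appear. The rational constant $-\tfrac{553}{96}$ emerges from summing the $n=1$ corrections across the three sub-series together with all boundary evaluations of the antiderivatives, and producing precisely the coefficients $42/9$ and $34/9$ on the two transcendental terms requires a careful numerical accounting at the very end of the computation.
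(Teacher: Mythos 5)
Your proposal is sound in outline and rests on the same two pillars as the paper's (one-line) proof --- the Wallis evaluation (\ref{eq:extremelybeautiful}) to trade the Pochhammer-heavy ratio for $\int_0^{\pi/2}\sin^{4n+\ast}t\,dt$, followed by a closed-form generating function in $u=\sin^4 t$ --- but you distribute the factors differently, and this costs you a great deal of work. The paper's route is to take the already-integrated Clausen identity (\ref{eq:2/3}), substitute $x=\sin^4 t$, and integrate over $[0,\pi/2]$: since the summand of (\ref{eq:2/3}) times $\tfrac{\pi}{2}(1/2)_{2n+2}/(2n+2)!$ is exactly $-\tfrac{1}{16}$ of the theorem's summand, the entire sum collapses to $-16\bigl[\tfrac{553}{1536}-\tfrac{4}{3\pi}\int_0^1(1-w^2)(1+w^2)^{3/2}\,dw\bigr]$, a single elementary integral with no logarithms in the integrand and no divergent pieces to cancel; the constants $\tfrac{17\sqrt2}{48}+\tfrac{7}{16}\ln(1+\sqrt2)$ from that integral give the stated right-hand side immediately. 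By peeling off $\sin^{4n+2}$ instead of $\sin^{4n+4}$, you force the weight $\tfrac{4n+3}{n(n+1)^2}$ into the generating function $S(u)$, whence the partial fractions, the double antiderivatives of $u/\sqrt{1-u}$, the $u^{-1}\ln(1+\sqrt{1-u})$ terms, and the delicate cancellation of singularities at $u=0$ that you rightly flag. One inaccuracy in your final step should be corrected: under $w=\sin t$ one has $\sqrt{1-u}=\sqrt{1-w^4}$, so the logarithmic integrals that actually arise are of the form $\int_0^1 w^{c}\ln\bigl(1+\sqrt{1-w^4}\bigr)\,(1-w^2)^{-1/2}dw$, not $\int_0^1 w^{c}\ln\bigl(1+\sqrt{1+w^2}\bigr)dw$; these are still elementary (integrate by parts and rationalize with $1-\sqrt{1-w^4}$ to reduce to $\int w^{a}(1+w^2)^{-1/2}dw$ and $\int w^{a}(1-w^2)^{1/2}dw$), so your plan survives, but as written that reduction is wrong. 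In short: correct strategy, workable but far more laborious execution than the paper's, which gets everything from (\ref{eq:2/3}) for free.
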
%

\begin{proof} 

  {\bf Theorem 2} spawns from (\ref{eq:2/3}). Determine $\int (1-\sin^{4} x)^{\frac{3}{2}} dx$ and substitute (\ref{eq:extremelybeautiful}) into (\ref{eq:2/3}).

\end{proof} 

\begin{theorem}%
\begin{equation}
_{3} F _{2}   ( - \frac{5}{2}, \frac{1}{4},  \frac{3}{4}   ;   \frac{1}{2}   ,   1   ;   1    ) =
   \frac{1141}{960 \sqrt{2} \pi}    -     \frac{103 \ln   ( \frac{2 - \sqrt{2}}{2 + \sqrt{2}}   )}{ 256 \pi}  
\end{equation}
\end{theorem}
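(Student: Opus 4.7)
The plan is to recognise the left-hand side as a moment integral against the binomial series for $(1-x)^{5/2}$ and then to evaluate the resulting trigonometric integral in closed form; the mechanism parallels the proof of Theorem~2, replacing $(1-\sin^4 x)^{3/2}$ by $(1-\sin^4 x)^{5/2}$.

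First, I would apply the Pochhammer duplication identity $(a)_n(a+\frac{1}{2})_n=(2a)_{2n}/4^n$ with $a=1/4$, giving $(1/4)_n(3/4)_n=(1/2)_{2n}/4^n$. Combined with $(1/2)_n=(2n)!/(4^n n!)$, this collapses three of the Pochhammer symbols,
\[
\frac{(1/4)_n(3/4)_n}{(1/2)_n(1)_n\,n!}=\frac{(1/2)_{2n}}{n!\,(2n)!},
\]
so the $_3F_2$ reduces to $\sum_{n\ge0}(-5/2)_n(1/2)_{2n}/(n!(2n)!)$.

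Second, I would use (\ref{eq:extremelybeautiful}) with index $2n$ to write $(1/2)_{2n}/(2n)!=\frac{2}{\pi}\int_0^{\pi/2}\sin^{4n}y\,dy$. Interchanging sum and integral (justified by uniform convergence on $[0,\pi/2-\varepsilon]$ and then passing $\varepsilon\to0$) and applying the binomial expansion $\sum_n(-5/2)_n x^n/n!=(1-x)^{5/2}$ at $x=\sin^4 y$ produces
\[
{}_3F_2\!\left(-\tfrac{5}{2},\tfrac{1}{4},\tfrac{3}{4};\tfrac{1}{2},1;1\right)=\frac{2}{\pi}\int_0^{\pi/2}(1-\sin^4 y)^{5/2}\,dy.
\]

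Third, I would compute the integral explicitly. Since $(1-\sin^4 y)^{5/2}=\cos^5 y\,(1+\sin^2 y)^{5/2}$ on $[0,\pi/2]$, the substitution $u=\sin y$ converts it into $\int_0^1(1-u^2)^2(1+u^2)^{5/2}du=\int_0^1(1-2u^2+u^4)(1+u^2)^{5/2}du$. Each of the three pieces is elementary: the reduction formula $\int(1+u^2)^{k+1/2}du=\frac{u(1+u^2)^{k+1/2}}{2k+2}+\frac{2k+1}{2k+2}\int(1+u^2)^{k-1/2}du$ (equivalently the substitution $u=\sinh t$) expresses each antiderivative at $u=1$ as a $\mathbb{Q}$-linear combination of $\sqrt{2}=\sqrt{1+1^2}$ and $\sinh^{-1}(1)=\ln(1+\sqrt{2})$.

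The main obstacle will be the arithmetic of stitching the three pieces back together: $\int_0^1 u^4(1+u^2)^{5/2}du$ cascades through $(1+u^2)^{7/2}$ and $(1+u^2)^{9/2}$, and one must collect the rational coefficients of $\sqrt{2}$ and $\ln(1+\sqrt{2})$ to obtain $\int_0^1(1-u^2)^2(1+u^2)^{5/2}du=\frac{1141\sqrt{2}}{3840}+\frac{103\ln(1+\sqrt{2})}{256}$. Multiplying by $2/\pi$ and using $\ln(1+\sqrt{2})=-\frac{1}{2}\ln\!\left(\frac{2-\sqrt{2}}{2+\sqrt{2}}\right)$ (which follows from $(2-\sqrt{2})/(2+\sqrt{2})=(\sqrt{2}-1)^2$) then reproduces the claimed closed form.
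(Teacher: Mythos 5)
Your proposal is correct and is essentially the paper's own (one-line) argument made explicit: the paper likewise reduces the sum to $\frac{2}{\pi}\int_{0}^{\pi/2}(1-\sin^{4}t)^{5/2}\,dt$ by combining the Wallis integral (\ref{eq:extremelybeautiful}) with the series for $(1-x)^{5/2}$, merely entering through the twice-integrated form (\ref{eq:2/5}) rather than directly through the binomial expansion. Your closed form $\int_{0}^{1}(1-u^{2})^{2}(1+u^{2})^{5/2}\,du=\frac{1141\sqrt{2}}{3840}+\frac{103}{256}\ln(1+\sqrt{2})$ and the final rewriting via $\frac{2-\sqrt{2}}{2+\sqrt{2}}=(\sqrt{2}-1)^{2}$ both check out.
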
%

\begin{proof} 

  Use the same technique as in {\bf Theorem 2}, using (\ref{eq:2/5}).

\end{proof} 

  Use similar techniques to prove the following very similar equation:

\begin{equation*}
_{3}  F _{2}    (-\frac{5}{2},\frac{1}{4},\frac{3}{4};\frac{3}{2},2;1) =  \frac{1067}{6720 \sqrt{2} \pi}        +   \frac{383 \tanh^{-1}(\frac{1}{\sqrt{2}}) }{128 \pi}     
\end{equation*}

  Consider how zeta-type sums can be determined using (\ref{eq:extremelybeautiful}). In the great book \emph{Computation Techniques for the Summation of Series}
 \cite{sofo}, Anthony Sofo provides the following series, which results from integrating $\frac{1}{2} \sum_{n=1}^{\infty} \frac{(2x)^{2n}}{n^{2} \binom{2n}{n} }$
 repeatedly:

\begin{equation*}
\sum_{n=1}^{\infty}   \frac{(2x)^{2n+3}}   {n^{2}\binom{2n}{n} 4 (2n+3)(2n+2)(2n+1) }
= \frac{x(2x^{2}+3)(\sin^{-1}x)^2}{3} + \frac{2\sqrt{1-x^2}(11x^{2}+4)\sin^{-1}x}{9}-\frac{85x^3}{27}-\frac{8x}{9}
\end{equation*}

  We leave it as an exercise to demonstrate how (\ref{eq:extremelybeautiful}) and the above series can be used to prove the equation
$\sum_{n=1}^{\infty} \frac{1}{n^{2}(n+1)^{2}(n+2)}=\frac{2\pi^{2}-19}{8}$, the partial sums of the series of which are known.

  The above series may be simplified as follows:

\begin{equation*}
   _{3} F _ {2}  (1,1,1;2,\frac{7}{2};x^2)   \frac{x^5}{15} = \frac{x(2x^{2}+3)(\sin^{-1}x)^2}{3} + \frac{2\sqrt{1-x^2}(11x^{2}+4)\sin^{-1}x}{9}-\frac{85x^3}{27}-\frac{8x}{9}
\end{equation*}

  It is worthy of note that the following infinite series, which is reminiscent of the series under discussion, is indicated in \cite{jolley}:

\begin{equation*}
\sum_{n=1}^{\infty}  \frac{(-1)^{n} \theta^{2n-1} (-m)_{2n-1} }{(2n-1)!}   =  \sin   (m \tan^{-1} \theta)   (1+\theta^2)^{\frac{m}{2}}
\end{equation*}

   Consider incorporating alternating harmonic numbers into similar
such series: $\sum_{j=0}^{\infty} \frac{\binom{2j}{j}}{4^{j} (2j+1)}H^{'}_{2n+1} = -2C + \frac{3\pi \ln 2}{2} $. Consider using the integrals
$\int_{0}^{1} \frac{x^{2n+1}}{x+1} dx = H^{'}_{2n+1} - \ln 2$ and $\int_{0}^{1} \frac {\sin^{-1}x}{x+1}  dx = -2C+\pi \ln 2$ to establish the above
equation.

  It can be established that:

\begin{equation*}
\int_{0}^{1} x^{2n} (1-x^2)^{n}   dx  =   \frac{4^n}{(2n+1)\binom{4n+1}{2n}}
\end{equation*}

  Integrals of such forms are very important identities. We are presently interested in series involving the expression $x^{2n} (1-x^2)^{n}$. In
the excellent \emph{ Some New Formulas for} $\pi$ \cite{almkv}, integrals of the form $\int x^{pn} (1-x)^{(m-p)n} dx$ are analyzed.
 In \cite{ramanujan1}, the equation $ \int_{0}^{1}  x^{m} (1-x)^{n}  dx    = \frac{\Gamma (m+1)  \Gamma   (n+1)}  {\Gamma (m+n+2 }       $ is given,
and is described as ``...an extremely well-known formula for the beta-function... ." Let us consider the integral $\int_{0}^{1} x^{2n} (1-x^2)^{n}   dx$
 and similar integrals. We intend to establish new infinite series, and classificationss of integrals (which are similar to the beta function).

\begin{theorem}%
\begin{equation*}
_{4} F _ {3}   (1,1,1,\frac{3}{2};\frac{7}{4},2,\frac{9}{4};\frac{1}{4}) = \frac{15}  {8} (-64+\pi(8+\pi)+(\cos^{-1}2)^{2}+8\sqrt{3}\cosh^{-1}(7)+\cosh^{-1}(26)\ln(2-\sqrt{3}))
\end{equation*}
\vspace{.11in}
\end{theorem}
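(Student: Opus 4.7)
My plan is to follow the HJ-algorithm. First I would rewrite the ${}_4F_3$ summand in factorial form: using $(3/2)_n = (2n+1)!/(4^n n!)$, the Legendre duplication $(7/4)_n(9/4)_n = (7/2)_{2n}/4^n$, and $(7/2)_{2n} = (4n+5)!/(60 \cdot 16^n (2n+2)!)$, the generic term collapses (the $(1/4)^n$ being absorbed by the duplication) to $120 \cdot 4^n ((2n+1)!)^2/(4n+5)!$. I would then write $((2n+1)!)^2/(4n+3)! = B(2n+2,2n+2) = \int_0^1 x^{2n+1}(1-x)^{2n+1}\,dx$ and $1/((4n+4)(4n+5)) = \int_0^1 t^{4n+3}(1-t)\,dt$, swap summation and integration, and sum the geometric series $\sum_n (4 t^4 x^2 (1-x)^2)^n = 1/(1 - 4 t^4 x^2 (1-x)^2)$ (convergent since the argument is $\le 1/4$ on $[0,1]^2$). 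This produces
$$120 \int_0^1\!\!\int_0^1 \frac{t^3(1-t)\, x(1-x)}{1 - 4 t^4 x^2 (1-x)^2}\,dx\,dt.$$

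Next I would evaluate the inner $x$-integral. The substitution $x = (1+\cos\theta)/2$ followed by $z = \sin^2\theta$ reduces it to $\tfrac18 \int_0^1 z\,dz/(\sqrt{1-z}(1-t^4 z^2/4))$; partial fractions over $1-t^4 z^2/4 = (1-t^2 z/2)(1+t^2 z/2)$ and then $w = \sqrt{1-z}$ yield
$$\int_0^1 \frac{x(1-x)\,dx}{1 - 4 t^4 x^2(1-x)^2} = \frac{1}{2t^3}\!\left[\frac{\arcsin(t/\sqrt 2)}{\sqrt{2-t^2}} - \frac{\operatorname{arcsinh}(t/\sqrt 2)}{\sqrt{2+t^2}}\right],$$
using $\arctan(t/\sqrt{2-t^2}) = \arcsin(t/\sqrt 2)$ and $\operatorname{arctanh}(t/\sqrt{2+t^2}) = \operatorname{arcsinh}(t/\sqrt 2)$. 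The $1/t^3$ here cancels the outer weight $t^3$ (which is exactly why I chose this particular pairing of beta integrals), reducing the double integral to $60\int_0^1 (1-t)\bigl[\arcsin(t/\sqrt 2)/\sqrt{2-t^2} - \operatorname{arcsinh}(t/\sqrt 2)/\sqrt{2+t^2}\bigr]\,dt$. Each of the four pieces has an elementary antiderivative: $\int\arcsin(t/\sqrt 2)/\sqrt{2-t^2}\,dt = \tfrac12\arcsin^2(t/\sqrt 2)$, and $\int t\,\arcsin(t/\sqrt 2)/\sqrt{2-t^2}\,dt = t - \sqrt{2-t^2}\,\arcsin(t/\sqrt 2)$ by integration by parts with $dv = t\,dt/\sqrt{2-t^2}$, together with the obvious $\operatorname{arcsinh}$ analogues.

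Evaluating at the endpoints using $\arcsin(1/\sqrt 2) = \pi/4$ and $\operatorname{arcsinh}(1/\sqrt 2) = \tfrac12\ln(2+\sqrt 3)$ --- the latter from $(1+\sqrt 3)^2 = 2(2+\sqrt 3)$ --- gives $\tfrac{15}{8}[\pi^2 + 8\pi - 64 - 4\ln^2(2+\sqrt 3) + 16\sqrt 3\,\ln(2+\sqrt 3)]$. To match the form stated in the theorem, I would finally apply $(\cos^{-1}2)^2 = -\ln^2(2+\sqrt 3)$, $\cosh^{-1}(7) = 2\ln(2+\sqrt 3)$ (from $(2+\sqrt 3)^2 = 7+4\sqrt 3$), $\cosh^{-1}(26) = 3\ln(2+\sqrt 3)$ (from $(2+\sqrt 3)^3 = 26+15\sqrt 3$), and $\ln(2-\sqrt 3) = -\ln(2+\sqrt 3)$. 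The main obstacle I anticipate is the inner integral: threading the three substitutions together through the partial-fraction decomposition without misplacing factors of $2$ or signs, particularly because the two partial-fraction pieces generate antiderivatives of different type (trigonometric vs.\ hyperbolic). Once the inner integral is in the displayed closed form, everything else is routine.
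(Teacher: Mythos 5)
Your derivation is correct and it reaches exactly the value $\tfrac{15}{8}\bigl(\pi^{2}+8\pi-64-4\ln^{2}(2+\sqrt{3})+16\sqrt{3}\ln(2+\sqrt{3})\bigr)$, which matches the stated right-hand side after the $\cos^{-1}$/$\cosh^{-1}$ rewritings you list; I verified your reduction of the summand to $120\cdot 4^{n}((2n+1)!)^{2}/(4n+5)!$, your inner integral (which follows more quickly from the partial fraction $\frac{x(1-x)}{1-s^{2}x^{2}(1-x)^{2}}=\frac{1}{2s}\bigl(\frac{1}{1-sx(1-x)}-\frac{1}{1+sx(1-x)}\bigr)$ with $s=2t^{2}$ than from your three chained substitutions), and the four endpoint evaluations. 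However, your route is genuinely different from the paper's. The paper starts from $\sum_{n\ge 1}x^{2n}(1-x^{2})^{n}/n^{2}=\operatorname{Li}_{2}(x^{2}-x^{4})$ and integrates both sides over $[0,1]$, using the beta-type evaluation $\int_{0}^{1}x^{2n}(1-x^{2})^{n}\,dx=4^{n}/\bigl((2n+1)\binom{4n+1}{2n}\bigr)$ stated just before the theorem; the left side becomes $\tfrac{2}{15}$ times the ${}_{4}F_{3}$, and the closed form is whatever $\int_{0}^{1}\operatorname{Li}_{2}(x^{2}-x^{4})\,dx$ evaluates to (in practice, by computer algebra). Your version replaces the single dilogarithm integral by a double integral of a rational function: two beta integrals encode $((2n+1)!)^{2}/(4n+3)!$ and $1/((4n+4)(4n+5))$, the $n$-sum collapses to a geometric series, and every subsequent antiderivative is elementary. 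What the paper's approach buys is brevity and a visible link to the $\operatorname{Li}_{2}$ machinery running through the rest of the paper; what yours buys is a self-contained, fully elementary and checkable proof that never invokes a special function or a CAS, at the cost of the bookkeeping you yourself flag in the inner integral.
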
%

\begin{proof}

\begin{equation*}
\sum_{n=1}^{\infty}  \frac{x^{2n} (1-x^2)^n}{n^2} = \operatorname{Li}_{2} (x^{2}-x^{4})
\end{equation*}

  Integrate both sides of the above equation.

\end{proof}

\begin{example}%
\begin{equation*}
_{3} F _{2}  (\frac{3}{2},2,2;\frac{7}{4},\frac{9}{4};\frac{1}{4}) = \frac{15}{36}  (6- \sqrt{3} \ln(2+\sqrt{3}) )
\end{equation*}
\end{example}%

\begin{example}%
\begin{equation*}
\sum_{n=1}^{\infty}    \frac{4^{n}(n+1)^2}         {(2n+1)\binom{4n+1}{2n}} 
= \frac{-36+45\pi+10\sqrt{3}\cosh^{-1}2}  {144}
\end{equation*}
\end{example}%

  It can be established that:

\begin{equation*}
\int_{0}^{1}x^{4n+1}  (1-x^4)^{2n+\frac{1}{2}}   dx =  \pi   \frac{(2n-1)!!(4n+1)!!}   {2^{3n+3}(3n+1)!}
\end{equation*}

  The above equation can be proven using the gamma function of half-integer expressions. It is not difficult to consider an infinitude of integrals of the form
$\int_{0}^{1} x^{a} (1-x^b)^{c}$  equal to an expression involving the gamma function of half-integer expressions.

  Consider:

\begin{equation*}
\sum_{n=2}^{\infty}    (\frac{1}{32})^{n}   \frac{\binom{2n}{n} \binom{2n-3}{n-2} (2n-1)}{n}
= \frac{(\Gamma (\frac{1}{4})) ^2}{8\pi^{\frac{3}{2}}} - \frac{9}{32}
\end{equation*}

  This series is determined through the complete elliptic integral of the first kind, being based upon $_{2} F _{1}   (\frac{1}{2},\frac{1}{2};2;-1)$.
 In \emph{Theory and Problems of Complex Variables with an Introduction to Conformal Mapping and its Applications} \cite{spiegel}
it is indicated that:

\begin{equation*}
\int_{0}^{\frac{\pi}{2}}  \frac{1}{ \sqrt{1-\frac{\sin^{2}x}{2}}}  dx  = \frac{(\Gamma(\frac{1}{4}))^2}{4 \sqrt{\pi}}
\end{equation*}

  Use the binomial theorem as follows:

\begin{equation*}
\frac{1}{\sqrt{1-x}} = -\sum_{n=0}^{\infty}   x^{n}   \frac{(-\frac{1}{2})_{n} (2n-1)}{n!}
\end{equation*}

  Substitute (\ref{eq:extremelybeautiful}):

\begin{equation*}
\frac{1}{\sqrt{1-\frac{\sin^{2}x}{2}}}   = \sum_{n=0}^{\infty}   \sin^{2n} x \frac{(-\frac{1}{2})_{n}(2n-1)}{2^{n}n!}
\end{equation*}

\section{Computational Perspectives}

  Let us discuss some results, some of which are possibly original (given that computational software is unable to evaluate them), from perspectives
of experimental mathematics.

\begin{example}%
\begin{equation*}
_{3} F_ {2}   (\frac{1}{2},\frac{2}{3},\frac{4}{3};\frac{3}{2},\frac{3}{2};1)   =   \frac{3\sqrt{3}  \coth^{-1}  \sqrt{3} }{2}
\end{equation*}
\end{example}%

\begin{proof}

  Consider:

\begin{equation*}
_{2} F_{1} (\frac{1}{3},\frac{2}{3};\frac{3}{2};z^2) ^{2} = (\frac{3\sin(\frac{\sin^{-1} z}{3}   )}{z} )^2
\end{equation*}

  Integrate both sides of the above equation

\end{proof}

\begin{example}%
\begin{equation*}
_{3} F _{2}   (\frac{1}{3},\frac{1}{2},\frac{5}{3};\frac{3}{2},\frac{3}{2};1) = \frac{9}{4} - \frac{3}{4} \sqrt{3} \coth^{-1}\sqrt{3}
\end{equation*}
\end{example}%

\begin{example}%
\begin{equation*}
_{3} F _{2}   (\frac{1}{4},\frac{1}{2},\frac{7}{4};\frac{3}{2},\frac{3}{2};1)  = \frac{1}{9}  (12 \sqrt{2} - 3 \pi  i  + 12 \tanh^{-1}(-1)^{\frac{3}{4}})
\end{equation*}
\end{example}%

\begin{example}%
\begin{eqnarray*}
\lefteqn{  _{3} F _{2}  (\frac{1}{8},\frac{1}{2},\frac{15}{8};\frac{3}{2},\frac{3}{2};1) =  }   \\ 
  &  &     \frac{ 16 \sqrt{2 + \sqrt{2}}}{21}   + \frac{2 i \pi}{7} -   \frac{8}{7} i \tan^{-1}((-1)^{\frac{1}{8}}) + \frac{4}{7} i \sqrt{2} \tan^{-1}(1 - \sqrt{2})     \\  
  &  &    -   \frac{4}{7} i \sqrt{2} \tan^{-1}(1 + \sqrt{2}) -   \frac{4}{7} i \sqrt{2} \tan^{-1}(1 - (-1)^{\frac{1}{8}} \sqrt{2})      \\  
  &  &    +   \frac{4}{7} i \sqrt{2} \tan^{-1}(1 + (-1)^{\frac{1}{8}} \sqrt{2})
\end{eqnarray*}
\end{example}%

\begin{example}%
\begin{equation*}
_{3}   F  _{2}   (\frac{1}{6},\frac{1}{2},\frac{11}{6};\frac{3}{2},\frac{3}{2};1)   =  
\frac{9 \sqrt{3}}{10}   + \frac{\sqrt{3} \pi i}{10}   -    \frac{3  \sqrt{3} \tanh^{-1}( \frac{3 + 2 i \sqrt{3}}{7}    )}{5} 
\end{equation*}
\end{example}%

  We consider integrals of the form $\int_{0}^{1}  (\frac{\sin(a \sin^{-1}x)}{x} )^{2}  dx$   to be important.

  Consider series for $\frac{1}{\pi}$ using the above latter technique presented in hypergeometric form:

\begin{theorem}%
\begin{equation*}
_{3} F_{2} (\frac{1}{2},\frac{2}{3},\frac{4}{3}; \frac{3}{2}, 2;1) = \frac{3\sqrt{3}(3-\ln 4)}{2 \pi}
\end{equation*}
\end{theorem}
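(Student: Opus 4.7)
The plan is to specialize the Clausen-product framework underlying Example 5 and Theorem 1 to the parameter choice $(k,n)=(1,3)$, then evaluate the resulting elementary integral in closed form.

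Starting from the identity used in the proof of Example 5,
$$
{}_{2}F_{1}\!\bigl(\tfrac{1}{3},\tfrac{2}{3};\tfrac{3}{2};\sin^{2}\theta\bigr)^{2}=\left(\frac{3\sin(\theta/3)}{\sin\theta}\right)^{2},
$$
apply Clausen's product to expand the left-hand side as a ${}_{3}F_{2}$ in $\sin^{2}\theta$:
$$
{}_{3}F_{2}\!\bigl(\tfrac{2}{3},\tfrac{4}{3},1;\,2,\tfrac{3}{2};\,\sin^{2}\theta\bigr)=\frac{9\sin^{2}(\theta/3)}{\sin^{2}\theta}.
$$
Now integrate both sides over $\theta\in[0,\pi/2]$. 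On the left, integrate term-by-term and invoke (\ref{eq:extremelybeautiful}); using $(1)_{n}/n!=1$ together with $\int_{0}^{\pi/2}\sin^{2n}\theta\,d\theta=\tfrac{\pi}{2}(1/2)_{n}/n!$, the series collapses to $\tfrac{\pi}{2}\,{}_{3}F_{2}(1/2,2/3,4/3;3/2,2;1)$. This is exactly the $(k,n)=(1,3)$ specialization of Theorem 1.

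It remains to evaluate $I=9\int_{0}^{\pi/2}\sin^{2}(\theta/3)/\sin^{2}\theta\,d\theta$ in closed form. Substitute $\theta=3u$ and use the triple-angle identity $\sin 3u=\sin u\,(1+2\cos 2u)$ to cancel the sine in the denominator, obtaining $I=27\int_{0}^{\pi/6}du/(1+2\cos 2u)^{2}$. A further substitution $v=2u$ gives $I=\tfrac{27}{2}\int_{0}^{\pi/3}dv/(1+2\cos v)^{2}$, and the Weierstrass substitution $t=\tan(v/2)$ (under which $1+2\cos v=(3-t^{2})/(1+t^{2})$) converts this into the rational integral
$$
I=27\int_{0}^{1/\sqrt{3}}\frac{1+t^{2}}{(3-t^{2})^{2}}\,dt.
$$
Write $1+t^{2}=-(3-t^{2})+4$ to split the integrand, then apply partial fractions on $1/(3-t^{2})^{2}$; the antiderivative is $\tfrac{1}{12\sqrt{3}}\ln\bigl|(\sqrt{3}+t)/(\sqrt{3}-t)\bigr|+t/[6(3-t^{2})]$. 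Evaluating from $0$ to $1/\sqrt{3}$ (where the logarithmic argument becomes $2$) yields $I=\tfrac{3\sqrt{3}}{4}(3-\ln 4)$. Dividing by $\pi/2$ gives the stated value $3\sqrt{3}(3-\ln 4)/(2\pi)$.

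The main obstacle is the bookkeeping at the end: one must verify that the $\ln 2$ contributions from the two partial-fraction pieces, with coefficients $-1/(2\sqrt{3})$ and $4\cdot 1/(12\sqrt{3})$, combine to $-\ln 2/6$ so that the result collapses cleanly to $(3-\ln 4)/(12\sqrt{3})$ before multiplying by $27$. All other steps—Clausen's product, termwise integration via (\ref{eq:extremelybeautiful}), and the Weierstrass reduction—are routine and mirror the derivations already sketched in Example 5 and Theorem 1.
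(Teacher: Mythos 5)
Your proposal is correct and follows essentially the same route as the paper's (very terse) proof: square the ${}_{2}F_{1}$ via Clausen's product, integrate over $[0,\pi/2]$ term-by-term using (\ref{eq:extremelybeautiful}) to produce $\tfrac{\pi}{2}\,{}_{3}F_{2}(\tfrac12,\tfrac23,\tfrac43;\tfrac32,2;1)$, and evaluate the elementary integral $9\int_{0}^{\pi/2}\sin^{2}(\theta/3)\csc^{2}\theta\,d\theta$ in closed form --- your work simply supplies the triple-angle/Weierstrass reduction that the paper leaves implicit. The only blemish is a dropped $\sqrt{3}$ in your closing remark that the logarithmic pieces ``combine to $-\ln 2/6$'' (the combined coefficient is $-1/(6\sqrt{3})$, giving $-\ln 2/(6\sqrt{3})$), but your stated value $(3-\ln 4)/(12\sqrt{3})$ for the rational integral and hence the final answer are both correct.
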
%

\begin{proof}

Consider:

\begin{equation*}
(_{2} F_{1}   ( \frac{1}{3}     , \frac{2}{3} ;  \frac{3}{2} ; \sin^{2} z   ))^2 = (3 \csc (z)   \sin (\frac{1}{3} \sin^{-1} (\sin z)))^2
\end{equation*}

  Integrate both sides of the above equation.

  Recall (\ref{eq:extremelybeautiful}).

\end{proof}

\begin{theorem}%
\begin{equation*}
_{3} F_{2} (\frac{1}{2}, \frac{1}{3}, \frac{5}{3} ; \frac{3}{2},  2  ; 1      ) =  \frac{3 \sqrt{3} (3+ \ln 16 )}  {8  \pi}
\end{equation*}
\end{theorem}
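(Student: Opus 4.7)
The plan is to mirror the proof of Theorem 5, but with a different choice of parameters in Clausen's product. By Clausen's formula applied with $a=\tfrac{1}{6}$ and $b=\tfrac{5}{6}$ (so that $a+b+\tfrac{1}{2}=\tfrac{3}{2}$), I get
\begin{equation*}
\bigl({}_2F_1(\tfrac{1}{6},\tfrac{5}{6};\tfrac{3}{2};z)\bigr)^2 = {}_3F_2(\tfrac{1}{3},\tfrac{5}{3},1;2,\tfrac{3}{2};z).
\end{equation*}
The standard closed form ${}_2F_1(a,1-a;\tfrac{3}{2};\sin^2 z) = \sin((1-2a)z)/((1-2a)\sin z)$ with $a=\tfrac{1}{6}$ yields ${}_2F_1(\tfrac{1}{6},\tfrac{5}{6};\tfrac{3}{2};\sin^2 z) = 3\sin(\tfrac{2z}{3})/(2\sin z)$, so setting $z \mapsto \sin^2 z$ gives the key identity
\begin{equation*}
{}_3F_2(\tfrac{1}{3},\tfrac{5}{3},1;2,\tfrac{3}{2};\sin^2 z) = \frac{9\sin^2(\tfrac{2z}{3})}{4\sin^2 z}.
\end{equation*}

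Next, integrate both sides over $[0,\tfrac{\pi}{2}]$. On the left I interchange sum and integral and apply (\ref{eq:extremelybeautiful}), exactly as in Theorem 5: since $(1)_n/n!=1$, the sum collapses into a ${}_3F_2$ at unit argument with an extra $\tfrac{1}{2}$ parameter inserted into the top row:
\begin{equation*}
\int_0^{\pi/2} {}_3F_2(\tfrac{1}{3},\tfrac{5}{3},1;2,\tfrac{3}{2};\sin^2 z)\,dz = \frac{\pi}{2}\,{}_3F_2(\tfrac{1}{2},\tfrac{1}{3},\tfrac{5}{3};\tfrac{3}{2},2;1).
\end{equation*}

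The remaining task is to evaluate $I := \int_0^{\pi/2} \sin^2(\tfrac{2z}{3})/\sin^2 z\,dz$ in closed form; this is the main obstacle, since the $\tfrac{2}{3}$ ratio is less symmetric than the $\tfrac{1}{3}$ ratio of Theorem 5. I would substitute $\theta = z/3$ and use the triple-angle identity $\sin 3\theta = \sin\theta(4\cos^2\theta-1)$ together with $\sin 2\theta = 2\sin\theta\cos\theta$ to reduce the integrand to $12\cos^2\theta/(4\cos^2\theta-1)^2$. The further substitution $u=\tan\theta$ converts this cleanly into the elementary integral $12\int_0^{1/\sqrt{3}} du/(3-u^2)^2$, which by the standard antiderivative $u/(6(3-u^2)) + (1/(12\sqrt{3}))\ln|(\sqrt{3}+u)/(\sqrt{3}-u)|$ evaluates to $\tfrac{\sqrt{3}}{4} + \tfrac{\sqrt{3}\ln 2}{3}$.

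Assembling the pieces, the right-hand side of the integrated identity is $\tfrac{9}{4}I = \tfrac{9\sqrt{3}}{16} + \tfrac{3\sqrt{3}\ln 2}{4}$. Dividing by $\tfrac{\pi}{2}$ and using $4\ln 2 = \ln 16$ gives
\begin{equation*}
{}_3F_2(\tfrac{1}{2},\tfrac{1}{3},\tfrac{5}{3};\tfrac{3}{2},2;1) = \frac{9\sqrt{3}+12\sqrt{3}\ln 2}{8\pi} = \frac{3\sqrt{3}(3+\ln 16)}{8\pi},
\end{equation*}
as claimed. The only nonroutine step is the evaluation of $I$; the $u=\tan(z/3)$ substitution is what makes it elementary, and I expect that recognizing this reduction is the place where a careless attempt might get bogged down in partial fractions or trigonometric identities.
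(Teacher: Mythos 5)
Your proposal is correct and follows essentially the same route as the paper: Clausen's product applied to $\bigl({}_2F_1(\tfrac{1}{6},\tfrac{5}{6};\tfrac{3}{2};\sin^2 z)\bigr)^2=\bigl(\tfrac{3}{2}\csc z\,\sin(\tfrac{2z}{3})\bigr)^2$, followed by term-by-term integration over $[0,\tfrac{\pi}{2}]$ via $\int_0^{\pi/2}\sin^{2n}t\,dt=\tfrac{\pi}{2}\tfrac{(1/2)_n}{n!}$. Your explicit evaluation of $\int_0^{\pi/2}\sin^2(\tfrac{2z}{3})\csc^2 z\,dz$ (which the paper leaves implicit in its appeal to the ``HJ algorithm'') checks out and yields the stated closed form.
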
%

\begin{proof}

Consider:

\begin{equation*}
(_{2} F_{1}   ( \frac{1}{6}    , \frac{5}{6} ;  \frac{3}{2} ; \sin^{2} z   ))^2 = (\frac{3}{2} \csc (z)   \sin (\frac{2}{3} \sin^{-1} (\sin z)))^2
\end{equation*}

Given the HJ algorithm, {\bf Theorem 6} holds.

\end{proof}

\begin{theorem}%
\begin{equation*}
_{3} F_{2} (\frac{1}{2}, \frac{1}{4}, \frac{7}{4}; \frac{3}{2}, 2 ; 1) = \frac{8   (\sqrt{2}+6\tanh^{-1} \sqrt{\frac{2 - \sqrt{2}}{2 + \sqrt{2}}    } )}{9 \pi}
\end{equation*}
\end{theorem}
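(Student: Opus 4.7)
Following the template established in Theorems 6 and 7, I would apply Clausen's product to the square of a judiciously chosen $_2F_1$. Taking $a=\tfrac{1}{8}$ and $b=\tfrac{7}{8}$ gives $a+b+\tfrac{1}{2}=\tfrac{3}{2}$, so Clausen's formula yields
\[
\bigl({}_2F_1(\tfrac{1}{8}, \tfrac{7}{8}; \tfrac{3}{2}; w)\bigr)^2 = {}_3F_2(\tfrac{1}{4}, \tfrac{7}{4}, 1; \tfrac{3}{2}, 2; w).
\]
The quadratic transformation ${}_2F_1(a, 1-a; \tfrac{3}{2}; \sin^2 z) = \sin((1-2a)z)/((1-2a)\sin z)$ that is used implicitly in the preceding two theorems specializes, with $a=\tfrac{1}{8}$ and $1-2a=\tfrac{3}{4}$, to
\[
{}_2F_1(\tfrac{1}{8}, \tfrac{7}{8}; \tfrac{3}{2}; \sin^2 z) = \frac{4\sin(3z/4)}{3\sin z}, \qquad z\in[0,\tfrac{\pi}{2}].
\]

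Setting $w = \sin^2 z$ in the Clausen expansion and integrating both sides over $[0, \pi/2]$, I would apply the Wallis integral (\ref{eq:extremelybeautiful}) term by term. Since $(1)_n/n! = 1$, one hypergeometric parameter collapses and the left side becomes $(\pi/2) \cdot {}_3F_2(\tfrac{1}{2}, \tfrac{1}{4}, \tfrac{7}{4}; \tfrac{3}{2}, 2; 1)$, while the right side is $(16/9)\, I$ with
\[
I := \int_0^{\pi/2} \frac{\sin^2(3z/4)}{\sin^2 z}\,dz.
\]
Thus the theorem reduces to showing $I = (\sqrt{2} + 3\ln(1+\sqrt{2}))/4$, and identification with the stated form follows from $\tan(\pi/8) = \sqrt{2}-1$, which gives $\tanh^{-1}\!\sqrt{(2-\sqrt{2})/(2+\sqrt{2})} = \tfrac{1}{2}\ln(1+\sqrt{2})$.

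The main obstacle is evaluating $I$: the integrand is bounded at $z=0$, but a naive splitting via $2\sin^2(3z/4) = 1 - \cos(3z/2)$ leaves a divergent $\int \csc^2 z\,dz$. I would instead integrate by parts, taking $u = \sin^2(3z/4)$ and $dv = \csc^2 z\,dz$ so that $v = -\cot z$. The boundary term vanishes at both endpoints (at $z=0$ because $\sin^2(3z/4)\cot z = O(z)$, and at $z=\pi/2$ because $\cot(\pi/2)=0$), leaving $I = (3/4)\int_0^{\pi/2} \cos z\,\sin(3z/2)/\sin z\,dz$. Expanding $\sin(3z/2)$ via the angle-addition formula and using $\cos z = 2\cos^2(z/2) - 1$ produces $\sin(3z/2)/\sin z = 2\cos(z/2) - \tfrac{1}{2}\sec(z/2)$; combined with $2\cos z\cos(z/2) = \cos(3z/2) + \cos(z/2)$, the integrand collapses to the elementary form $\cos(3z/2) + \tfrac{1}{2}\sec(z/2)$. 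Finally $\int_0^{\pi/2}\cos(3z/2)\,dz = \sqrt{2}/3$ and $\int_0^{\pi/2}\sec(z/2)\,dz = 2\ln(1+\sqrt{2})$ yield $I = \sqrt{2}/4 + (3/4)\ln(1+\sqrt{2})$ as required.
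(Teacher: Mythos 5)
Your proposal is correct and follows exactly the route the paper intends: Clausen's product applied to $({}_2F_1(\tfrac{1}{8},\tfrac{7}{8};\tfrac{3}{2};\sin^2 z))^2=(\tfrac{4}{3}\csc z\,\sin(\tfrac{3z}{4}))^2$, termwise integration over $[0,\pi/2]$ via the Wallis integral (\ref{eq:extremelybeautiful}), and evaluation of the resulting elementary integral. The paper states only the squared identity and invokes the HJ-algorithm; your write-up supplies the details it omits (the integration by parts, the half-angle reduction, and the identification $\tanh^{-1}\sqrt{(2-\sqrt{2})/(2+\sqrt{2})}=\tfrac{1}{2}\ln(1+\sqrt{2})$), and all of these check out.
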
%

\begin{proof}

\begin{equation*}
(_{2} F_{1}   ( \frac{1}{8}    ,    \frac{7}{8} ;  \frac{3}{2} ; \sin^{2} z   ))^2 = (\frac{4}{3} \csc (z)   \sin (\frac{3}{4} \sin^{-1} (\sin z)))^2
\end{equation*}

Given the HJ algorithm, {\bf Theorem 7} holds.

\end{proof}

\begin{theorem}%
\begin{equation*}
_{3} F_{2} (\frac{1}{2}, \frac{1}{6}, \frac{11}{6}; \frac{3}{2}, 2 ; 1)  = \frac{\frac{18}{25}+\frac{12}{5}\sqrt{3}\coth^{-1}\sqrt{3}}    {\pi}
\end{equation*}
\end{theorem}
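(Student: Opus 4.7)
The plan is to follow the template of Theorems 6, 7, and 8. Taking $k/n = 1/12$ in the identity of Theorem 1, Clausen's product supplies
\begin{equation*}
\bigl({}_{2}F_{1}\bigl(\tfrac{1}{12},\tfrac{11}{12};\tfrac{3}{2};\sin^{2}z\bigr)\bigr)^{2} = \Bigl(\tfrac{6}{5}\csc z\,\sin\bigl(\tfrac{5}{6}\sin^{-1}(\sin z)\bigr)\Bigr)^{2}.
\end{equation*}
On $[0,\pi/2]$ the inner $\sin^{-1}(\sin z)$ is just $z$, so the right-hand side reduces to $(36/25)\sin^{2}(5z/6)/\sin^{2}z$. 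The left-hand side, when expanded via Clausen as a series in $\sin^{2n}z$ and integrated termwise using (\ref{eq:extremelybeautiful}), collapses to $(\pi/2)\cdot{}_{3}F_{2}(\tfrac{1}{2},\tfrac{1}{6},\tfrac{11}{6};\tfrac{3}{2},2;1)$, exactly as in the preceding theorems. The substance of the proof is therefore the closed-form evaluation of $I := \int_{0}^{\pi/2}\sin^{2}(5z/6)/\sin^{2}z\,dz$.

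My plan for $I$ proceeds in two stages. First I would rewrite $\sin^{2}(5z/6) = (1-\cos(5z/3))/2$ and integrate by parts with $dv = \csc^{2}z\,dz$, $v = -\cot z$; both boundary contributions vanish (the integrated term is $O(z)$ at $z=0$ and identically zero at $z=\pi/2$), giving $I = \tfrac{5}{6}\int_{0}^{\pi/2}\sin(5z/3)\cot z\,dz$. Second, the elementary identity $\sin(bz)\cot z = \sin((b-1)z)/\sin z + \cos(bz)$, applied with $b=5/3$, separates this into the trivial piece $\int_{0}^{\pi/2}\cos(5z/3)\,dz = 3/10$ and the single non-trivial integral $J := \int_{0}^{\pi/2}\sin(2z/3)/\sin z\,dz$.

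The crux, and the step I expect to be the main obstacle, is producing the constant $\coth^{-1}\sqrt{3}$ appearing in the claim out of $J$. My plan is to invoke the triple-angle identity $\sin z = \sin(z/3)\,(3-4\sin^{2}(z/3))$, which cancels the $\sin(z/3)$ in $\sin(2z/3) = 2\sin(z/3)\cos(z/3)$; the substitution $u = \sin(z/3)$ then turns $J$ into the rational integral $\int_{0}^{1/2} 6/(3-4u^{2})\,du$. Standard partial fractions evaluate this to $(\sqrt{3}/2)\ln(2+\sqrt{3}) = \sqrt{3}\coth^{-1}\sqrt{3}$. Reassembling the pieces gives $I = \tfrac{5\sqrt{3}}{6}\coth^{-1}\sqrt{3} + \tfrac{1}{4}$ and hence $(36/25)I = \tfrac{9}{25} + \tfrac{6\sqrt{3}}{5}\coth^{-1}\sqrt{3}$; dividing by $\pi/2$ yields the stated value of the ${}_{3}F_{2}$.
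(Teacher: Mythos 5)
Your proposal is correct and follows exactly the route the paper intends: the same Clausen identity with $k/n=1/12$, termwise integration over $[0,\pi/2]$ via $\int_0^{\pi/2}\sin^{2n}t\,dt=\frac{\pi}{2}\frac{(1/2)_n}{n!}$, and evaluation of the resulting elementary integral. The paper's proof consists only of the displayed ${}_2F_1$ identity and the sentence ``Given the HJ algorithm, {\bf Theorem 8} holds,'' so your explicit computation of $\int_0^{\pi/2}\sin^2(5z/6)\csc^2 z\,dz=\frac14+\frac{5\sqrt{3}}{6}\coth^{-1}\sqrt{3}$ simply supplies the details the paper omits; I checked each step and the constants match the stated result.
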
%

\begin{proof}

\begin{equation*}
(_{2} F_{1}   ( \frac{1}{12}   ,  \frac{11}{12} ;  \frac{3}{2} ; \sin^{2} z   ))^2 = (\frac{6}{5} \csc (z)   \sin (\frac{5}{6} \sin^{-1} (\sin z)))^2
\end{equation*}

Given the HJ algorithm, {\bf Theorem 8} holds.

\end{proof}

 The following very useful relationship is given in ``Some New Formulas for $\pi$" \cite{almkv}:

\begin{equation*}
\frac{1}{\binom{3n}{n}(3n+1)} = \int_{0}^{1} x^{2n} (1-x)^{2n} dx 
\end{equation*}

    Series of the form $\pi = \sum_{n=0}^{\infty}    \frac{S(n)}{\binom{mn}{pn}a^n}$ are discussed in \cite{almkv}.

  Consider another example of the above generalization:

\begin{theorem}%
\begin{equation*}
_{3} F_{2}  (  \frac{1}{2}, \frac{8}{5}, \frac{2}{5}; \frac{3}{2}, 2; 1  ) =  \frac{5 (5 (5 + 3 \sqrt{5}) + 3 (5 + 3 \sqrt{5}) \ln 5 -   12 (5 + 2 \sqrt{5}) \ln(5 - \sqrt{5}) +     6 (5 + \sqrt{5}) \ln (5 + \sqrt{5}))}{9 \pi (1 + \sqrt{5}) \sqrt{2 (5 + \sqrt{5})}}
\end{equation*}
\end{theorem}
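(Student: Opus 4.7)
The plan is to follow the same template used in Theorems 6--8. First, specialize the identity implicit in Theorem 1 to $k/n = 1/5$, giving
\begin{equation*}
{}_{2}F_{1}\!\left(\tfrac{1}{5},\tfrac{4}{5};\tfrac{3}{2};\sin^{2} z\right) = \tfrac{5}{3}\,\csc z\,\sin\!\left(\tfrac{3}{5}\sin^{-1}(\sin z)\right).
\end{equation*}
Squaring and invoking Clausen's product converts the left side into ${}_{3}F_{2}(\tfrac{2}{5},\tfrac{8}{5},1;\tfrac{3}{2},2;\sin^{2}z)$, so
\begin{equation*}
{}_{3}F_{2}\!\left(\tfrac{2}{5},\tfrac{8}{5},1;\tfrac{3}{2},2;\sin^{2}z\right) = \tfrac{25}{9}\,\csc^{2}z\,\sin^{2}\!\left(\tfrac{3}{5}\sin^{-1}(\sin z)\right).
\end{equation*}

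Second, I would integrate both sides over $[0,\pi/2]$. On the left, exchanging sum and integral and applying (\ref{eq:extremelybeautiful}) inserts a factor $(\tfrac{1}{2})_{n}/n!$ into the $n$-th term, yielding $\tfrac{\pi}{2}\,{}_{3}F_{2}(\tfrac{1}{2},\tfrac{2}{5},\tfrac{8}{5};\tfrac{3}{2},2;1)$, which is exactly the hypergeometric series of Theorem 9 (up to the factor $\pi/2$). This is the standard HJ-algorithm step, identical in spirit to the previous four theorems in this section.

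Third, I would evaluate the right-hand integral in closed form. Since $\sin^{-1}(\sin z) = z$ on $[0,\pi/2]$, the integrand is $\tfrac{25}{9}\sin^{2}(3z/5)/\sin^{2}z$. Integration by parts with $u = \sin^{2}(3z/5)$ and $dv = \csc^{2}z\,dz$, together with the identity $\cot z\,\sin(6z/5) = \cos(6z/5) + \csc z\,\sin(z/5)$, reduces the problem to $\int_{0}^{\pi/2}\csc z\,\sin(z/5)\,dz$. Substituting $w = z/5$ and using $\sin(5w) = \sin w\,(16\sin^{4}w - 20\sin^{2}w + 5)$ transforms this into a rational integral in $\sin^{2}w$, and the key factorization
\begin{equation*}
16\sin^{4}w - 20\sin^{2}w + 5 = 16\!\left(\sin^{2}w - \tfrac{5-\sqrt{5}}{8}\right)\!\left(\sin^{2}w - \tfrac{5+\sqrt{5}}{8}\right)
\end{equation*}
explains the appearance of $\sqrt{5}$, $\ln(5-\sqrt{5})$, and $\ln(5+\sqrt{5})$ in the answer. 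Partial fractions followed by the Weierstrass substitution $t = \tan w$ turns each piece into an integral of the form $\int dt/((1-A)t^{2} - A)$, evaluating to $\tanh^{-1}$ (equivalently, logarithmic) expressions at the endpoint $t = \tan(\pi/10)$.

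The main obstacle is not conceptual but symbolic: the raw output from partial fractions is a bulky combination of logarithms at $\tan(\pi/10)$, and showing it collapses to the specific form $\tfrac{5(\,\cdots\,)}{9\pi(1+\sqrt{5})\sqrt{2(5+\sqrt{5})}}$ with the stated integer coefficients on $5+3\sqrt{5}$, $5+2\sqrt{5}$, and $5+\sqrt{5}$ requires careful manipulation of identities among $\sin(\pi/10)$, $\cos(\pi/5)$, and $\sqrt{5}$. Once this algebraic reduction is executed, equating the two evaluations and dividing by $\pi/2$ delivers Theorem 9.
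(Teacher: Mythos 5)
Your proposal follows exactly the route the paper intends: the same specialization $\left({}_{2}F_{1}(\tfrac{1}{5},\tfrac{4}{5};\tfrac{3}{2};\sin^{2}z)\right)^{2}=\left(\tfrac{5}{3}\csc z\sin(\tfrac{3}{5}\sin^{-1}(\sin z))\right)^{2}$, Clausen's product to obtain the ${}_{3}F_{2}$, and termwise integration over $[0,\pi/2]$ via (\ref{eq:extremelybeautiful}), which is all the paper's one-line proof actually records. Your version is correct and in fact supplies the integration-by-parts and factorization details that the paper omits entirely.
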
%

\begin{proof}

Consider: $(_{2} F_{1}   ( \frac{1}{5}   ,  \frac{4}{5} ;  \frac{3}{2} ; \sin^{2} z   ))^2 = (\frac{5}{3} \csc (z)   \sin (\frac{3}{5} \sin^{-1} (\sin z)))^2$.

\end{proof}

  The following integral is given in \cite{gradsh}: $\int_0^{\pi} x \sin^{p} x dx = \frac{\pi^2}{2^{p+1}} \frac{\Gamma(p+1)}{(\Gamma(\frac{p}{2}+1))^2}$.
 We presently evaluate new hypergeometric series using such integrals through manipulations of the left-hand side of the following equation discussed in \cite{bo}:

\begin{eqnarray*}
\lefteqn{_{3}F _{2} (a,2b-a-1,a-2b+2;b,a-b+\frac{3}{2};\frac{x}{4}) = }    \\  
&   &  _{3}F _{2} (\frac{a}{3}, \frac{a+1}{3}, \frac{a+2}{3};b,a-b+\frac{3}{2}; \frac{-27x}{4(1-x)^3}) (1-x)^{-a}
\end{eqnarray*}

  Replace $x$ with $4\sin^{2}(x)$ and integrate. Let $\heartsuit (n,x)$ = $_{2}F_{1} (-n-1,n+2; \frac{3}{2}; \sin^{2}x)$. 

\section{Further Results}

\begin{theorem}%
\begin{eqnarray*}
\lefteqn{_{3} F_{2} (-\frac{6}{5},\frac{1}{2},\frac{11}{5};1,\frac{3}{2};1) = }    \\
  &  & ( 5 (\sqrt{5 (5 + \sqrt{5})} (25 - 6 \ln(-1 + \sqrt{5})) +    \sqrt{5 + \sqrt{5}} (-25 + 6 \ln(-1 + \sqrt{5}))     \\ 
  &  &       +    \sqrt{5 - \sqrt{  5}} (1 + \sqrt{5}) (\ln(64) - 6 \ln(3 + \sqrt{5}))) )/ ( 204 \pi \sqrt{2}   )
\end{eqnarray*}
\end{theorem}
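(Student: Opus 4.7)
The plan is to reduce the stated $_3F_2$ at unit argument to an elementary trigonometric integral by exploiting the fact that the two numerator parameters $-\tfrac{6}{5}$ and $\tfrac{11}{5}$ sum to $1$, so the Clausen-type closed form for ${}_2F_1(a,1-a;\tfrac{3}{2};\sin^{2}z)$ that already powered Theorems 6--9 applies here as well. First I invoke (\ref{eq:extremelybeautiful}) in the rearranged form $(1/2)_{n}/n! = (2/\pi)\int_{0}^{\pi/2}\sin^{2n}z\,dz$. Since $(1)_{n} = n!$, the summand of the left-hand side factors as $\frac{(-6/5)_{n}(11/5)_{n}}{(3/2)_{n}\,n!}\cdot(1/2)_{n}/n!$, and termwise integration (legitimized by the convergence condition $c-a-b=\tfrac{3}{2}-1=\tfrac{1}{2}>0$ for the inner $_2F_1$ at $\sin^{2}z=1$) yields
\begin{equation*}
{}_3F_2\!\left(-\tfrac{6}{5},\tfrac{1}{2},\tfrac{11}{5};1,\tfrac{3}{2};1\right) = \frac{2}{\pi}\int_{0}^{\pi/2}{}_2F_1\!\left(-\tfrac{6}{5},\tfrac{11}{5};\tfrac{3}{2};\sin^{2}z\right)dz.
\end{equation*}

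Because $-\tfrac{6}{5}+\tfrac{11}{5}=1$, the Clausen-type formula ${}_2F_1(a,1-a;\tfrac{3}{2};\sin^{2}z)=\sin((1-2a)z)/((1-2a)\sin z)$ applies with $a=-6/5$ and $1-2a=17/5$, reducing the theorem to
\begin{equation*}
{}_3F_2\!\left(-\tfrac{6}{5},\tfrac{1}{2},\tfrac{11}{5};1,\tfrac{3}{2};1\right) = \frac{10}{17\pi}\int_{0}^{\pi/2}\frac{\sin(17z/5)}{\sin z}\,dz.
\end{equation*}

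The remaining and main obstacle is to evaluate $J := \int_{0}^{\pi/2}\sin(17z/5)/\sin z\,dz$ in closed form. My approach would be to substitute $z=5w$, giving $J=5\int_{0}^{\pi/10}\sin(17w)/\sin(5w)\,dw$, and then factor $\sin(5w)=\sin w\,(16\sin^{4}w-20\sin^{2}w+5)$, whose quartic factor has roots $\sin^{2}(\pi/5)=(5-\sqrt{5})/8$ and $\sin^{2}(2\pi/5)=(5+\sqrt{5})/8$; partial-fraction decomposition in $\sin^{2}w$ then breaks $J$ into a sum of elementary integrals producing logarithms and algebraic constants. Equivalently, the substitution $u=e^{2iz}$ casts $J$ as $\operatorname{Im}\int_{1}^{-1}u^{6/5}/(u-1)\,du$ along the upper unit semicircle, and the further substitution $u=v^{5}$ followed by polynomial division reduces the problem to $\int dv/(v^{5}-1)$, whose partial-fraction expansion over the fifth roots of unity, evaluated at the endpoints, yields exactly the combination of $\sqrt{5\pm\sqrt{5}}$ and of $\ln(-1+\sqrt{5})$, $\ln(3+\sqrt{5})$ appearing on the right-hand side (the two logarithms being, up to a $\ln 2$ shift, integer multiples of $\ln((1+\sqrt{5})/2)$). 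The conceptual work is finished once $J$ has been exhibited; the hard part is the careful bookkeeping required to collapse the trigonometric and logarithmic fragments arising at multiples of $\pi/5$ into the compact form stated.
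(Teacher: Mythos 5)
Your proposal is correct and follows essentially the same route as the paper: the paper's (very terse) proof consists of noting that $\heartsuit(\tfrac{1}{5},x)={}_{2}F_{1}(-\tfrac{6}{5},\tfrac{11}{5};\tfrac{3}{2};\sin^{2}x)=\tfrac{5}{17}\csc(x)\sin(\tfrac{17}{5}x)$ and then applying (\ref{eq:extremelybeautiful}), which is exactly your reduction to $\tfrac{10}{17\pi}\int_{0}^{\pi/2}\sin(17z/5)\csc z\,dz$. You in fact supply more than the paper does, since your partial-fraction plan for evaluating that elementary integral over the roots of $\sin(5w)$ fills in the step the paper leaves entirely implicit.
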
%

\begin{proof}

  Consider: $\heartsuit (\frac{1}{5},x) = \frac{5}{17} \csc(x) \sin(\frac{17}{5} \sin^{-1}(\sin x))$. Use (\ref{eq:extremelybeautiful}).

\end{proof}

\begin{theorem}%
\begin{eqnarray*}
\lefteqn{_{3} F_{2} (-\frac{9}{8}, \frac{1}{2}, \frac{17}{8}; 1, \frac{3}{2}; 1)   =  }    \\  
  &  &  ((   \frac{2}{ 117 \pi}  (128 \sqrt{2 - \sqrt{2}} +     9 (\ln(161 - 112 \sqrt{2} - 4 \sqrt{2 (1594 - 1127 \sqrt{2})})    \\  
  &  &  -     2 \sqrt{2} \ln(\sqrt{2} + \sqrt{2 - \sqrt{2}}) +   \sqrt{2} \ln(4 - \sqrt{2} - 2 \sqrt{2 (2 - \sqrt{2})})))))
\end{eqnarray*}
\end{theorem}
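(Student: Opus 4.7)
The plan is to follow the HJ-algorithm template used in the surrounding theorems, starting from the Clausen-type (quadratic transformation) identity for the $\heartsuit$ function at $n=\tfrac{1}{8}$. Specifically, the classical relation ${}_2F_1(-a,a+1;\tfrac{3}{2};\sin^2 x) = \sin((2a+1)x)/((2a+1)\sin x)$ specialized to $a = \tfrac{9}{8}$ yields
\begin{equation*}
\heartsuit(\tfrac{1}{8}, x) \;=\; {}_2F_1\!\left(-\tfrac{9}{8},\tfrac{17}{8};\tfrac{3}{2};\sin^2 x\right) \;=\; \tfrac{8}{17}\csc(x)\,\sin\!\left(\tfrac{17}{8}\sin^{-1}(\sin x)\right),
\end{equation*}
valid on $[0,\pi/2]$. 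This is the exact analogue of what was used for Theorem 9, only with the base $\tfrac{1}{5}$ replaced by $\tfrac{1}{8}$.

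Next I would multiply the power-series representation of $\heartsuit(\tfrac{1}{8},x)$ termwise by $(\tfrac{1}{2})_n/n!$ and integrate from $0$ to $\pi/2$, invoking the Wallis-type identity (\ref{eq:extremelybeautiful}). Since $(1)_n = n!$, the extra factor $(\tfrac{1}{2})_n/n!$ supplied by integrating $\sin^{2n}x$ combines with the existing $_2F_1$ parameters to produce exactly the $_3F_2$ in the theorem statement. The result is the compact identity
\begin{equation*}
{}_3F_2\!\left(-\tfrac{9}{8},\tfrac{1}{2},\tfrac{17}{8};1,\tfrac{3}{2};1\right) \;=\; \frac{16}{17\pi}\int_0^{\pi/2}\frac{\sin(17x/8)}{\sin x}\,dx,
\end{equation*}
reducing the entire theorem to an evaluation of the right-hand integral.

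The main obstacle is evaluating this last integral in the explicit nested-radical logarithmic form given. With the substitution $v = e^{ix/8}$, the integrand becomes a rational function of $v$ whose denominator factors over the $16$th roots of unity; performing partial fractions and integrating yields a finite sum of complex logarithms $\log(v - \zeta)$ with $\zeta^{16}=1$. Regrouping conjugate pairs produces precisely the surds $\sqrt{2\pm\sqrt{2}}$ and the algebraic arguments $161 - 112\sqrt{2} - 4\sqrt{2(1594 - 1127\sqrt{2})}$ and $4 - \sqrt{2} - 2\sqrt{2(2-\sqrt{2})}$ seen in the statement, using the half-angle values $\cos(\pi/8) = \tfrac{1}{2}\sqrt{2+\sqrt{2}}$ and $\sin(\pi/8) = \tfrac{1}{2}\sqrt{2-\sqrt{2}}$.

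The decisive step, and the one that cannot be automated cleanly, is collapsing the sixteen-term complex-log sum into the three-term real-log expression displayed: checking that the imaginary parts cancel, combining logs with conjugate cyclotomic arguments into real logs of norms, and recognizing that $\log(161 - 112\sqrt{2} - 4\sqrt{2(1594-1127\sqrt{2})})$ arises as $2\log$ of a quartic surd over the $16$th cyclotomic field. Once this algebraic bookkeeping is done, dividing by $\tfrac{17\pi}{16}$ and factoring out $\tfrac{2}{117\pi}$ produces the stated evaluation; numerical comparison to high precision would serve as the sanity check confirming that all the sign ambiguities in the nested radicals have been resolved correctly.
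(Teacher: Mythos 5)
Your overall strategy is the paper's own (the paper's entire proof is ``Integrate $\heartsuit(\frac{1}{8},x)$,'' i.e.\ expand the ${}_2F_1$, integrate termwise over $[0,\pi/2]$ via (\ref{eq:extremelybeautiful}), and evaluate the resulting elementary integral), but your specialization of the closed form is wrong, and the error propagates through everything downstream. From your own identity ${}_2F_1(-a,a+1;\frac{3}{2};\sin^2x)=\sin((2a+1)x)/((2a+1)\sin x)$ with $a=\frac{9}{8}$, the frequency is $2a+1=\frac{13}{4}$, not $a+1=\frac{17}{8}$: you have put the upper hypergeometric parameter where the frequency belongs. (Compare the paper's $\heartsuit(\frac{1}{5},x)=\frac{5}{17}\csc(x)\sin(\frac{17}{5}x)$, where $17=2\cdot6+5$, not $6+5$.) The correct reduction is
\begin{equation*}
{}_3F_2\!\left(-\tfrac{9}{8},\tfrac{1}{2},\tfrac{17}{8};1,\tfrac{3}{2};1\right)=\frac{8}{13\pi}\int_0^{\pi/2}\frac{\sin(13x/4)}{\sin x}\,dx,
\end{equation*}
not $\frac{16}{17\pi}\int_0^{\pi/2}\frac{\sin(17x/8)}{\sin x}\,dx$.

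The discrepancy is not cosmetic. Writing $\frac{\sin(13x/4)}{\sin x}=2\cos(9x/4)+2\cos(x/4)-\frac{\sin(3x/4)}{\sin x}$ and integrating the cosines gives $\left(-\frac{4}{9}+4\right)\sqrt{2-\sqrt{2}}=\frac{32}{9}\sqrt{2-\sqrt{2}}$, and $\frac{8}{13\pi}\cdot\frac{32}{9}=\frac{256}{117\pi}$, which is exactly the leading term $\frac{2}{117\pi}\cdot128\sqrt{2-\sqrt{2}}$ of the statement; the denominator $117=9\cdot13$ and the two-deep radicals $\sqrt{2\pm\sqrt{2}}$ (values at $\pi/8$, i.e.\ eighth roots of unity) are fingerprints of the frequency $\frac{13}{4}$. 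Your integrand $\frac{\sin(17x/8)}{\sin x}$ would instead land at $\frac{17}{8}\cdot\frac{\pi}{2}=\frac{17\pi}{16}$, producing sixteenth roots of unity, triple-nested radicals such as $\sqrt{2+\sqrt{2+\sqrt{2}}}$, and a prefactor carrying $17$ rather than $13$ --- none of which appear in the stated evaluation, so your computation cannot terminate at the claimed right-hand side. With the frequency corrected, the remainder of your outline (substitute $v=e^{ix/4}$, partial fractions over the eighth roots of unity, recombine conjugate logarithms into real ones, confirm numerically) is sound and is precisely what the paper's one-line proof tacitly asks for.
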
%

\begin{proof}

  Integrate $\heartsuit (\frac{1}{8},x)$.

\end{proof}

\begin{theorem}%
\begin{equation*}
_{3} F _{2}  (1,1,\frac{3}{2};\frac{5}{3},\frac{13}{6};1) =  \frac{28}{3} +   \frac{ 2^{\frac{1}{3}} 7 \pi}{9 \sqrt{3}}  -   \frac{ 2^{\frac{1}{3}} 14 \tan^{-1}(   \frac{1 + 2^{\frac{2}{3}}}{\sqrt{3}} )}   {3 \sqrt{3}}  +   \frac{14}{9} 2^{\frac{1}{3}}  \ln(2 - 2^{\frac{2}{3}}) -  \frac{7}{9}  2^{\frac{1}{3}}  \ln(2 (2 + 2^{\frac{1}{3}} + 2^{\frac{2}{3}}))
\end{equation*}
\end{theorem}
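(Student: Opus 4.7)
The plan is to apply the HJ-algorithm by collapsing the $_3F_2$ to a Beta-function integral and then reducing to an elementary cubic integral. First, since $\tfrac{13}{6} = \tfrac{5}{3} + \tfrac{1}{2}$, Legendre's duplication formula gives $(\tfrac{5}{3})_n(\tfrac{13}{6})_n = 4^{-n}(\tfrac{10}{3})_{2n}$. Combined with $(1)_n = n!$ and the identity $n!\,(\tfrac{3}{2})_n = (2n+1)!/4^n$, the series collapses to $\sum_{n \ge 0} (2n+1)!/(\tfrac{10}{3})_{2n}$. Writing $(\tfrac{10}{3})_{2n}$ as a Beta-function denominator through $B(2n+1,\tfrac{7}{3}) = \int_0^1 x^{2n}(1-x)^{4/3}\,dx$ and using $\sum_{n \ge 0}(2n+1)x^{2n} = (1+x^2)/(1-x^2)^2$ produces
\[
{}_3F_2\!\left(1,1,\tfrac{3}{2};\tfrac{5}{3},\tfrac{13}{6};1\right) = \tfrac{7}{3}\int_0^1 \frac{1+x^2}{(1-x)^{2/3}(1+x)^2}\,dx.
\]

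Next I would rationalize the cube root by the substitution $u = (1-x)^{1/3}$, which converts the integral into $7\int_0^1 (2-2u^3+u^6)/(2-u^3)^2\,du$. Two algebraic observations then collapse this: the decomposition $2-2u^3+u^6 = (2-u^3)^2 - 2(1-u^3)$ together with the exact differential $\frac{d}{du}[u/(2-u^3)] = (2+2u^3)/(2-u^3)^2$ gives $\int_0^1 du/(2-u^3)^2 = \tfrac{1}{6} + \tfrac{1}{3}\int_0^1 du/(2-u^3)$, and the whole expression reduces to $\tfrac{28}{3} - \tfrac{28}{3}\int_0^1 du/(2-u^3)$. The remaining cubic integral is evaluated by partial fractions over $\mathbb{R}$ using the factorization $2-u^3 = (2^{1/3}-u)(u^2 + 2^{1/3}u + 2^{2/3})$: the logarithmic piece simplifies via the minimal-polynomial relation $(2^{1/3}-1)(1+2^{1/3}+2^{2/3}) = 1$, and the arctangent piece has upper endpoint $\tan^{-1}((1+2^{2/3})/\sqrt{3})$ and lower endpoint $\tan^{-1}(1/\sqrt{3}) = \pi/6$, the latter producing the stated $\pi$-term. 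The prefactor $1/(2^{2/3}) = 2^{1/3}/2$ reintroduces the correct $2^{1/3}$ throughout.

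The main obstacle is purely cosmetic matching of the log term. The natural output of the partial fraction calculation is the single logarithm $\tfrac{7}{3}\, 2^{1/3}\ln(2^{1/3}-1)$, whereas the statement displays the asymmetric combination $\tfrac{14}{9}\, 2^{1/3}\ln(2-2^{2/3}) - \tfrac{7}{9}\, 2^{1/3}\ln(2(2+2^{1/3}+2^{2/3}))$. Using the identities $2 - 2^{2/3} = 2^{2/3}(2^{1/3}-1)$ and $2 + 2^{1/3} + 2^{2/3} = 2^{1/3}/(2^{1/3}-1)$, the $\ln 2$ contributions from the two stated pieces must cancel, leaving $\tfrac{21}{9}\, 2^{1/3}\ln(2^{1/3}-1) = \tfrac{7}{3}\, 2^{1/3}\ln(2^{1/3}-1)$, which matches. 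This bookkeeping step is the only place that requires care; everything else is a routine partial fraction computation.
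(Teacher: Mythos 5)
Your proposal is correct --- I verified the Pochhammer collapse $(1)_n(1)_n(\tfrac32)_n/((\tfrac53)_n(\tfrac{13}{6})_n n!)=(2n+1)!/(\tfrac{10}{3})_{2n}$, the reduction to $\tfrac73\int_0^1(1+x^2)(1-x)^{-2/3}(1+x)^{-2}\,dx$, the passage to $\tfrac{28}{3}-\tfrac{28}{3}\int_0^1 du/(2-u^3)$, and the final log/arctan bookkeeping, all of which check out (numerically the value is $\approx 3.8196$ on both sides). However, your route is genuinely different from the paper's. The paper's proof consists of the single identity $\sum_{n\ge1}(1-x^3)^{2n}/n=-\ln\bigl(x^3(2-x^3)\bigr)$ integrated over $[0,1]$: the left side becomes $\sum_{n\ge1}\tfrac1n\,(2n)!/(\tfrac43)_{2n}=\tfrac{9}{14}\,{}_3F_2(1,1,\tfrac32;\tfrac53,\tfrac{13}{6};1)$ via the beta integral $\int_0^1(1-x^3)^{2n}dx$ and an index shift, while the right side, after $\int_0^1\ln x\,dx=-1$ and an integration by parts on $\int_0^1\ln(2-x^3)\,dx$, lands on the very same $\int_0^1 dx/(2-x^3)$ that you reach. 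So the paper drives the construction from the generating function of $1/n$ (the logarithm) with the weight $(1-x^3)^{2n}$, whereas you drive it from the generating function of $2n+1$ with the Euler kernel $x^{2n}(1-x)^{4/3}$; the paper's version explains \emph{where the series came from} (it is the HJ-algorithm applied to $h(x)=(1-x^3)^{2n}$, $j(n)=1/n$), while yours is the cleaner verification, since it starts from the ${}_3F_2$ itself, needs no logarithm or integration by parts, and isolates the only nontrivial analytic content --- the partial-fraction evaluation of $\int_0^1 du/(2-u^3)$ --- explicitly. Your closing observation that the paper's two log terms recombine to $\tfrac73\,2^{1/3}\ln(2^{1/3}-1)$ via $(2^{1/3}-1)(1+2^{1/3}+2^{2/3})=1$ is a worthwhile simplification of the stated right-hand side that the paper does not record.
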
%

\begin{proof}

  Use the HJ-algorithm.

\begin{equation*}
\sum_{n=1}^{\infty} \frac{(1 - x^3)^{2n}}{n} = -\ln(-x^{3} (-2 + x^{3}))
\end{equation*}%

  Integrate both sides of the above equation.

\end{proof}

\begin{example}%
\begin{eqnarray*}
 _{3} F _{2}    (1,1,\frac{3}{2};\frac{19}{12}, \frac{25}{12};1)  & = &   \frac{91}{72} (  12 - 2^{\frac{1}{6}} \sqrt{3} \tan^{-1} (\frac{-1 + 2^{\frac{5}{6}}}{\sqrt{3}}) -  2^{\frac{1}{6}} \sqrt{3} \tan^{-1}(\frac{1 + 2^{\frac{5}{6}}} {\sqrt{3}}) +  2^{\frac{1}{6}} \ln(2 - 2^{\frac{5}{6}})   \\
&   & \mbox{}  + \frac{\ln(2 + 2^{\frac{2}{3}} - 2^{\frac{5}{6}})} {2^{\frac{5}{6}}}  -  2^{\frac{1}{6}} \ln(2 + 2^{\frac{5}{6}}) - \frac{\ln(2 + 2^{\frac{2}{3}} +  2^{\frac{5}{6}})}{2^{\frac{5}{6}}}   ) 
\end{eqnarray*}
\end{example}%

\begin{proof}

  Use the HJ-algorithm.

\begin{equation*}
\sum _{n=1}^{\infty} \frac{(1-x^6)^{2n}}{n} = -\ln(-x^{6}(-2+x^6))
\end{equation*}%

  Integrate both sides of the above equation.

\end{proof}

\begin{example}%
\begin{eqnarray*}
_{4} F _{3}  (1,\frac{5}{4},\frac{7}{4},2;\frac{3}{2},\frac{11}{6},\frac{13}{6};\frac{2}{27})  = \frac{7}{2}(-10+\pi+(-2\sqrt{-7 - i}+\sqrt{2-14i})\tan^{-1}(\sqrt{\frac{-1-i}{2}})     \\   
-2\sqrt{-7+i} \tan^{-1}(\sqrt{\frac{-1+i}{2}})+\sqrt{2+14i}\tan^{-1}(\sqrt{\frac{-1+i}{2}}))
\end{eqnarray*}
\end{example}%

\begin{proof}

  Use the HJ-algorithm.

\begin{equation*}
\sum_{n=1}^{\infty} \frac{(1-x^2)^{n} x^{4n}}{2^n}    =    -  \frac{x^{4} (-1 + x^2)}{2 - x^{4} + x^6}   
\end{equation*}%

  Consider:

\begin{equation*}
\int_0 ^1 (1-x^2)^{n} x^{4n} dx = \frac{\Gamma(1 + n) \Gamma(\frac{1}{2} + 2 n)}{2 \Gamma(\frac{3}{2} + 3 n)}   = \frac       {2^{n} n! (4n-1)!!}    { (6n+1)!!}  
\end{equation*}%

\end{proof}

\begin{example}%
\begin{eqnarray*}
\lefteqn{_{6} F _{5}   (1,1,1,\frac{5}{4},\frac{3}{2},\frac{7}{4}; \frac{11}{8},\frac{13}{8},\frac{15}{8},2,\frac{17}{8};1) =    }      \\
  &  &  \frac{315}{64} (-32 + 3 \pi^{2} -  4 (-1)^{\frac{1}{8}} 2^{\frac{1}{4}} \ln(-1 + (-1)^{\frac{1}{8}} 2^{\frac{1}{4}}) +  2 i \cot^{-1}    (  \frac{2^{\frac{3}{4}} - \sqrt{2 + \sqrt{2}}}{\sqrt{ 2 - \sqrt{2}} }  )  \\
  &  & \ln(-1 + (-1)^{\frac{1}{8}} 2^{\frac{1}{4}})  +   4 (-1)^{\frac{1}{8}} 2^{\frac{1}{4}} \ln(1 + (-1)^{\frac{1}{8}} 2^{\frac{1}{4}}) -  2 i \cot^{-1}   ( \frac{2^{\frac{3}{4}} - \sqrt{2 + \sqrt{2}}}{\sqrt{2 - \sqrt{2}}}  ) \\
  &  & \ln(     i + (-1)^{\frac{1}{8}} 2^{\frac{1}{4}}) +  \ln      ^{2}   (   \frac{1 + (-1)^{\frac{1}{8}} 2^{\frac{1}{4}}}{1 - (-1)^{\frac{1}{8}} 2^{\frac{1}{4}}}   ) -  4 (-1)^{\frac{7}{8}} 2^{\frac{1}{4}} \ln(-1 + (-1)^{\frac{7}{8}} 2^{\frac{1}{4}}) \\
  &  &  +  \ln    ^{2}    (   \frac{1 - (-1)^{\frac{7}{8}} 2^{\frac{1}{4}}}{1 + (-1)^{\frac{7}{8}} 2^{\frac{1}{4}}}     ) +  2 i \cot^{-1}   (  \frac{2^{\frac{3}{4}} + \sqrt{2 + \sqrt{2}}}{\sqrt{  2 - \sqrt{2}}} )    (\ln(-1 + (-1)^{\frac{1}{8}} 2^{\frac{1}{4}})  - \ln(1 + (-1)^{\frac{1}{8}} 2^{\frac{1}{4}})     \\
  &  &  + \ln(-1 + (-1)^{\frac{7}{8}} 2^{\frac{1}{4}}) -    \ln(1 + (-1)^{\frac{7}{8}} 2^{\frac{1}{4}})) +  4 (-1)^{\frac{7}{8}} 2^{\frac{1}{4}} \ln(1 + (-1)^{\frac{7}{8}} 2^{\frac{1}{4}})    \\
  &  &  -  2 i \cot^{-1}      (  \frac{2^{\frac{3}{4}} - \sqrt{2 + \sqrt{2}}}{\sqrt{2 - \sqrt{2}}}    )  \ln(1 + (-1)^{\frac{7}{8}} 2^{\frac{1}{4}}) +  2 \ln(-1 + (-1)^{\frac{7}{8}} 2^{\frac{1}{4}}) \ln(1 + (-1)^{\frac{7}{8}} 2^{\frac{1}{4}})   \\
  &  & -  2 i \pi (   \ln (    \frac{(-1)^{\frac{1}{8}} }{(-1)^{\frac{1}{8}} - 2^{\frac{1}{4}}}     ) -  \ln(-1 + (-1)^{\frac{1}{8}} 2^{\frac{1}{4}}) +  \ln(1 + (-1)^{\frac{1}{8}} 2^{\frac{1}{4}}) +       2 \ln(1 + (-1)^{\frac{7}{8}} 2^{\frac{1}{4}}))      \\
  &  & - 4 \sqrt{2} \ln(-1 + \sqrt{2}) -  2 \ln  ^{2}(-1 + \sqrt{2}) + 4 \sqrt{2} \ln(1 + \sqrt{2}) +    4 \ln(-1 + \sqrt{2}) \ln(1 + \sqrt{2})    \\
  &  &   - 2 \ln(1 + \sqrt{2})^2 +  \ln(3 + 2 \sqrt{2})^2 -  \ln(1 + (-1)^{\frac{7}{8}} 2^{\frac{1}{4}}) \ln(   \operatorname{Im}(\sqrt{1 - i})^2     \\
  &  &   + (-1 + \operatorname{Re}(\sqrt{1 - i}))^2) -  \ln(-1 + (-1)^{\frac{7}{8}} 2^{\frac{1}{4}}) \ln(  \operatorname{Im}(\sqrt{1 - i})^2       \\
  &  &  + (1 + \operatorname{Re}(\sqrt{1 - i}))^2) +  \ln(1 + (-1)^{\frac{7}{8}} 2^{\frac{1}{4}})  \ln(  \operatorname{Im}(\sqrt{1 - i})^2      \\
  &  &    + (1 + \operatorname{Re}(\sqrt{1 - i}))^2) -  \ln(-1 + (-1)^{\frac{1}{8}} 2^{\frac{1}{4}}) \ln( \operatorname{Im}(\sqrt{1 + i})^2      \\
  &  &   + (-1 + \operatorname{Re}(\sqrt{1 + i}))^2) +  \ln(1 + (-1)^{\frac{1}{8}} 2^{\frac{1}{4}})  \ln(   \operatorname{Im}(\sqrt{1 + i})^2     \\
  &  &  + (-1 + \operatorname{Re}(\sqrt{1 + i}))^2) +  \ln(-1 + (-1)^{\frac{1}{8}} 2^{\frac{1}{4}}) \ln(  \operatorname{Im}(\sqrt{1 + i})^2    \\
  &  &  + (1 + \operatorname{Re}(\sqrt{1 + i}))^2) -  \ln(1 + (-1)^{\frac{1}{8}} 2^{\frac{1}{4}}) \ln(  \operatorname{Im}(\sqrt{1 + i})^2  + (1 + \operatorname{Re}(\sqrt{1 + i}))^2))
\end{eqnarray*}
\end{example}%

\begin{proof}

  Use the HJ-algorithm.

\begin{equation*}
\sum_{n=1}^{\infty} \frac{(1 - x^2)^{4 n}}{n^2} = \operatorname{Li}_{2}((-1+x^2)^4)
\end{equation*}

\end{proof}

\begin{example}%
\begin{eqnarray*}
\lefteqn{_{5} F _{4}   (\frac{1}{4},\frac{3}{4},1,1,2;\frac{1}{2},\frac{5}{6},\frac{7}{6},3;\frac{2}{27})     =   }     \\
  &  &  \frac{18}{5} - \frac{491 \pi}{35} + \frac{81 \pi^2}{4} +        9 \ln ^{2}  ( \frac{  (-1)^{\frac{1}{8}} + 2^{\frac{1}{4}}}{(-1)^{\frac{1}{8}} - 2^{\frac{1}{4}}}    )    - (\frac{522}{35} - \frac{124 i}{5})     (-1)^{\frac{1}{8}} 2^{\frac{1}{4}} \ln(-1 + (-1)^{\frac{1}{8}} 2^{\frac{1}{4}})     \\  
  &  &   +    18 \pi i  \ln(-1 + (-1)^{\frac{1}{8}} 2^{\frac{1}{4}}) -      18 i \cot^{-1} ( \frac{-2 + 2^{\frac{1}{4}} \sqrt{2 + \sqrt{2}}}{2^{\frac{1}{4}} \sqrt{2 - \sqrt{2}}}      )     \ln(-1 + (-1)^{\frac{1}{8}} 2^{\frac{1}{4}})      \\      
  &  & +  18 i \cot^{-1}(   \frac{2 + 2^{\frac{1}{4}} \sqrt{2 + \sqrt{2}}}{2^{\frac{1}{4}} \sqrt{2 - \sqrt{2}}}     )  \ln(-1 + (-1)^{\frac{1}{8}} 2^{\frac{1}{4}})     + (\frac{522}{35}  -   \frac{124 i}{5}) (-1)^{\frac{1}{8}} 2^{\frac{1}{4}} \ln(1 + (-1)^{\frac{1}{8}} 2^{\frac{1}{4}})      \\  
  &  &   -      18 i \pi \ln(1 + (-1)^{\frac{1}{8}} 2^{\frac{1}{4}}) +      18 i \cot^{-1}(  \frac{-2 + 2^{\frac{1}{4}} \sqrt{2 + \sqrt{2}}}{2^{\frac{1}{4}} \sqrt{2 - \sqrt{2}}} )     \ln(1 + (-1)^{\frac{1}{8}} 2^{\frac{1}{4}})     \\    
  &  & -   18 i \cot^{-1}   (    \frac{2 + 2^{\frac{1}{4}} \sqrt{2 + \sqrt{2}}}{   2^{\frac{1}{4}} \sqrt{2 - \sqrt{2}}}   )        \ln(1 + (-1)^{\frac{1}{8}} 2^{\frac{1}{4}})    +   9 \ln   ^{2}       (   \frac{1 + (-1)^{\frac{1}{8}} 2^{\frac{1}{4}}}{   1 - (-1)^{\frac{1}{8}} 2^{\frac{1}{4}}}   )       \\   
  &  &    - (\frac{522}{35} + \frac{124 i}{5}) (-1)^{\frac{7}{8}} 2^{\frac{1}{4}} \ln(-1 + (-1)^{\frac{7}{8}} 2^{\frac{1}{4}}) +      18 i \cot^{-1}   (   \frac{2 + 2^{\frac{1}{4}} \sqrt{2 + \sqrt{2}}}{ 2^{\frac{1}{4}} \sqrt{2 - \sqrt{2}}}     )     \ln(-1 + (-1)^{\frac{7}{8}} 2^{\frac{1}{4}})        \\  
  &  &  + (\frac{522}{35}        +   \frac{124i}{5} ) (-1)^{\frac{7}{8}} 2^{\frac{1}{4}} \ln(1 + (-1)^{\frac{7}{8}} 2^{\frac{1}{4}}) -      18 \pi i  \ln(1 + (-1)^{\frac{7}{8}} 2^{\frac{1}{4}})      +     18 i \cot^{-1}    (  \frac{-2 + 2^{\frac{1}{4}} \sqrt{2 + \sqrt{2}}}{2^{\frac{1}{4}} \sqrt{2 - \sqrt{2}}}    )    \\      
  &  &  \ln(1 + (-1)^{\frac{7}{8}} 2^{\frac{1}{4}}) -     18 i \cot^{-1}    (  \frac{2 + 2^{\frac{1}{4}} \sqrt{2 + \sqrt{2}}}{  2^{\frac{1}{4}} \sqrt{2 - \sqrt{2}}}  ) \ln[1 + (-1)^{\frac{7}{8}} 2^{\frac{1}{4}}]      \\    
  &  &   +  18 \ln(-1 + (-1)^{\frac{7}{8}} 2^{\frac{1}{4}}) \ln(1 + (-1)^{\frac{7}{8}} 2^{\frac{1}{4}})      \\    
  &  &   -  9 \ln(1 + (-1)^{\frac{7}{8}} 2^{\frac{1}{4}}) \ln^{2 }( \operatorname{Im}(\sqrt{1 - i})+ (-1 + \operatorname{Re}(\sqrt{1 - i}))^2)      \\    
  &  &    -  9 \ln(-1 + (-1)^{\frac{7}{8}} 2^{\frac{1}{4}}) \ln^{2}(    \operatorname{Im}(\sqrt{1 - i}) + (1 + \operatorname{Re}(\sqrt{1 - i}))^2)      \\    
  &  &     +  9 \ln(1 + (-1)^{\frac{7}{8}} 2^{\frac{1}{4}}) \ln( \operatorname{Im}(\sqrt{1 - i})^2 + (1 + \operatorname{Re}(\sqrt{1 - i}))^{2})      \\    
  &  &     -  9 \ln(-1 + (-1)^{\frac{1}{8}} 2^{\frac{1}{4}}) \ln(  \operatorname{Im}(\sqrt{1 + i})^2 + (-1 + \operatorname{Re}(\sqrt{1 + i}))^2)      \\    
  &  &    +  9 \ln(1 + (-1)^{\frac{1}{8}} 2^{\frac{1}{4}}) \ln(  \operatorname{Im}(\sqrt{1 + i})^2 + (-1 + \operatorname{Re}(\sqrt{1 + i}))^2)      \\    
  &  &    +   9 \ln(-1 + (-1)^{\frac{1}{8}} 2^{\frac{1}{4}}) \ln(  \operatorname{Im}(\sqrt{1 + i})^2 + (1 + \operatorname{Re}(\sqrt{1 + i}))^2)      \\    
  &  &    -  9 \ln(1 + (-1)^{\frac{1}{8}} 2^{\frac{1}{4}}) \ln(  \operatorname{Im}(\sqrt{1 + i})^2 + (1 + \operatorname{Re}(\sqrt{1 + i}))^2)   
\end{eqnarray*}
\end{example}%

\begin{proof}

  Use the HJ-algorithm.

\begin{equation*}
\sum_{n=1}^{\infty}      \frac{(1 - x^2)^{n} x^{4 n}}{2^{n} (n + 2) }  = \frac{-4 x^{4} + 4 x^{6} - x^{8} + 2 x^{10} - x^{12} -  8 \ln(\frac{1}{2} (2 - x^{4} + x^{6}))}{2 x^{8} (-1 + x^2)^2}
\end{equation*}%

\end{proof}

\begin{theorem}%
\begin{equation*}
_{3} F _{2}   (\frac{3}{2},2,2;\frac{7}{4},\frac{9}{4};\frac{1}{3}) =  \frac{45}{128} (6 - \sqrt{3 (1 + \sqrt{3})} \coth^{-1}(\sqrt{1 + \sqrt{3}} ) +  \sqrt{3 (-1 + \sqrt{3})} \csc^{-1}(3^{\frac{1}{4}}))
\end{equation*}
\end{theorem}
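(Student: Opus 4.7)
The plan is to mimic the derivation of Example~1 (which handled the $z=1/4$ case) but at $z=1/3$, via the HJ-algorithm. Using the Pochhammer identities $(3/2)_n = (2n+1)!/(4^n n!)$, $(2)_n = (n+1)!$, and the duplication $(7/4)_n(9/4)_n = (7/2)_{2n}/4^n$, together with the beta integral $\int_0^1 x^{2n}(1-x^2)^n\, dx = 4^n(2n)!^2/(4n+1)!$ stated earlier, the $n$th term of the series rewrites as
\[
\frac{(3/2)_n(2)_n^2}{(7/4)_n(9/4)_n\, n!}\, z^n \;=\; \frac{15(n+1)(4z)^n}{2}\int_0^1 \bigl(x^2(1-x^2)\bigr)^{n+1}\, dx.
\]
Summing over $n\geq 0$ and applying $\sum_{n\ge 0}(n+1)y^n = (1-y)^{-2}$ with $y = 4z\,x^2(1-x^2)$ gives the integral representation
\[
{}_3F_2\!\left(\tfrac32,2,2;\tfrac74,\tfrac94;z\right) = \frac{15}{2}\int_0^1 \frac{x^2(1-x^2)\, dx}{\bigl(1-4z\,x^2(1-x^2)\bigr)^2}.
\]

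Specializing to $z = 1/3$ gives $1 - \tfrac{4}{3}x^2(1-x^2) = (4x^4-4x^2+3)/3$, so the theorem reduces to evaluating
\[
\frac{135}{2}\int_0^1 \frac{x^2(1-x^2)\, dx}{(4x^4-4x^2+3)^2}.
\]
To factor the quartic, write $(ax^2+bx+c)(ax^2-bx+c) = a^2 x^4 + (2ac-b^2)x^2 + c^2$ and set $a=2$, $c=\sqrt{3}$, so $b^2 = 2ac + 4 = 4(1+\sqrt{3})$. This yields the real factorization
\[
4x^4-4x^2+3 = \bigl(2x^2 + 2\sqrt{1+\sqrt 3}\,x + \sqrt 3\bigr)\bigl(2x^2 - 2\sqrt{1+\sqrt 3}\,x + \sqrt 3\bigr),
\]
each factor having discriminant $4(1-\sqrt 3) < 0$ and therefore irreducible over $\mathbb R$. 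Observe that the nested radical $\sqrt{1+\sqrt 3}$ is precisely the quantity appearing inside the $\coth^{-1}$ in the target answer.

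Next, perform partial fractions on the rational integrand using this factorization and integrate term by term; each piece produces an elementary antiderivative combining a real logarithm with a real arctangent. The logarithmic contributions from the two conjugate quadratic factors pair up into $\coth^{-1}\!\bigl(\sqrt{1+\sqrt 3}\bigr) = \tfrac12\ln\bigl((\sqrt{1+\sqrt 3}+1)/(\sqrt{1+\sqrt 3}-1)\bigr)$ with prefactor $\sqrt{3(1+\sqrt 3)}$, while the arctangent contributions pair up into $\csc^{-1}(3^{1/4}) = \arctan(1/\sqrt{\sqrt 3 - 1})$ with prefactor $\sqrt{3(-1+\sqrt 3)}$; the rational part of the antiderivative evaluated at $x=0,1$ contributes the constant $6$. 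Combining the outside factor $135/2$ with the $1/192$ coming from the integral recovers the stated $45/128$.

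The main obstacle is the bookkeeping in the partial-fraction step: coefficients involve $\sqrt{1+\sqrt 3}$, and the resulting log/arctangent expressions must be collected and simplified using nested-radical identities (for instance $\sqrt{3(1+\sqrt 3)} = \sqrt{3}\cdot\sqrt{1+\sqrt 3}$) to recognize the collapse into the compact $\coth^{-1}$ and $\csc^{-1}$ forms. This algebraic simplification, routine in principle but intricate in practice, is precisely where a direct symbolic computation tends to return an unrecognizable logarithm-and-arctangent sum rather than the clean expression displayed.
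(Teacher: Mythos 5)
Your proposal is correct, and I verified its two load-bearing claims: the term identity $\frac{(3/2)_n(2)_n^2}{(7/4)_n(9/4)_n\,n!}z^n=\frac{15(n+1)(4z)^n}{2}\int_0^1\bigl(x^2(1-x^2)\bigr)^{n+1}dx$ checks out via the duplication formula, and the resulting integral $\frac{135}{2}\int_0^1 x^2(1-x^2)(4x^4-4x^2+3)^{-2}dx$ agrees numerically with the stated closed form (both $\approx 1.85348$); moreover your log terms genuinely collapse, since $2\pm 2\sqrt{1+\sqrt3}+\sqrt3=(\sqrt{1+\sqrt3}\pm 1)^2$. However, your route differs from the paper's. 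The paper instead starts from $\sum_{n\ge1}\frac{(1-x^2)^{2n}n}{3^n}=\frac{3(1-x^2)^2}{(2+2x^2-x^4)^2}$ and integrates over $[0,1]$ using $\int_0^1(1-x^2)^{2n}dx=\frac{16^n((2n)!)^2}{(4n+1)!}$, whose first term $8/45$ produces the prefactor; the quartic it must then handle, $x^4-2x^2-2$, is a \emph{biquadratic} that factors over the reals as $\bigl(x^2-(1+\sqrt3)\bigr)\bigl(x^2-(1-\sqrt3)\bigr)$, so the partial fractions stay in the variable $x^2$ and the terms $\coth^{-1}(\sqrt{1+\sqrt3})$ and $\csc^{-1}(3^{1/4})=\tan^{-1}(1/\sqrt{\sqrt3-1})$ fall out directly from $\int_0^1\frac{dx}{(1+\sqrt3)-x^2}$ and $\int_0^1\frac{dx}{(\sqrt3-1)+x^2}$. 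Your quartic $4x^4-4x^2+3$ has negative discriminant as a quadratic in $x^2$, which is exactly why you are forced into the conjugate-quadratic factorization with the nested radical $\sqrt{1+\sqrt3}$ in the linear coefficients and the attendant arctangent-addition bookkeeping. Both derivations are valid and land on the same answer; the paper's choice of weight ($n$ rather than $n+1$) and kernel ($(1-x^2)^{2n}$ rather than $x^{2n}(1-x^2)^n$) buys a substantially lighter final integration, while yours has the advantage of fitting the same template as the paper's Theorem~4 and Example~1.
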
%

\begin{proof}

  Use the HJ-algorithm.

\begin{equation*}
\sum_{n=1}^{\infty} \frac{(1 - x^2)^{2n} n}      {3^n}    =  \frac{3 (-1 + x^2)^2}{(-2 - 2 x^{2} + x^4)^2} 
\end{equation*}

\end{proof}

\begin{theorem}%
\begin{equation*}
_{5} F _{4}     (\frac{3}{2},2,2,2,2;1,\frac{7}{4},\frac{9}{4},3;\frac{1}{4}) =  \frac{5}{24} (9 (-6 + \pi) \pi +  4 (6 + 14 \sqrt{3} \cosh^{-1}(2) + 9 \cosh^{-1}(2)^{2} -   72 \coth^{-1}(\sqrt{3})^2))
\end{equation*}
\end{theorem}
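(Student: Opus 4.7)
The plan is to apply the HJ-algorithm exactly as in Theorem 11, but with the summand
\begin{equation*}
h(x)\,j(n) \;=\; \frac{n^{2}}{(n+1)\,4^{n}}\,(1-x^{2})^{2n}.
\end{equation*}
Writing $n^{2}/(n+1) = (n-1) + 1/(n+1)$ splits the sum in $n$ into three standard pieces that evaluate in closed form via $\sum n u^{n} = u/(1-u)^{2}$, $\sum u^{n} = u/(1-u)$, and $\sum u^{n}/(n+1) = -\ln(1-u)/u - 1$, all at $u = (1-x^{2})^{2}/4$. Since $4 - (1-x^{2})^{2} = (3-x^{2})(1+x^{2})$, the resulting closed form $S(x)$ is a combination of two rational functions of $x$ together with a logarithm $\ln\bigl((3-x^{2})(1+x^{2})/4\bigr)$ divided by $(1-x^{2})^{2}$; a Taylor expansion at $x=1$ confirms that $S(x) = O((1-x^{2})^{2})$ there, so the integrand is bounded on $[0,1]$.

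Next I would integrate both sides on $[0,1]$. On the left, the beta-type identity
\begin{equation*}
\int_{0}^{1}(1-x^{2})^{2n}\,dx \;=\; \frac{n!\,(\tfrac{1}{2})_{n}}{(\tfrac{3}{4})_{n}\,(\tfrac{5}{4})_{n}}
\end{equation*}
turns the series into a pure hypergeometric sum, and a direct ratio-of-consecutive-terms check (after shifting the index by one and pulling out the $n=1$ value $T_{1}=\tfrac{1}{15}$) identifies the result with $\tfrac{1}{15}\cdot{}_{5}F_{4}(\tfrac{3}{2},2,2,2,2;\,1,\tfrac{7}{4},\tfrac{9}{4},3;\,\tfrac{1}{4})$: the extra factor $n^{2}/(n+1)$ is precisely what promotes the four $(2)_{m}$'s in the numerator and the $(3)_{m}$ in the denominator relative to the $_{3}F_{2}$ of Theorem 11. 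The theorem thus reduces to proving that $15\int_{0}^{1}S(x)\,dx$ equals the stated right-hand side.

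The main obstacle is the evaluation of this final integral. The two rational contributions reduce, via the factorization
\begin{equation*}
\frac{(1-x^{2})^{2}}{((3-x^{2})(1+x^{2}))^{2}} \;=\; \tfrac{1}{4}\!\left(\frac{1}{(1+x^{2})^{2}} - \frac{2}{(3-x^{2})(1+x^{2})} + \frac{1}{(3-x^{2})^{2}}\right)
\end{equation*}
and the analogous partial fraction for $u/(1-u)$, to elementary integrals producing rational multiples of $\pi$ and $\coth^{-1}(\sqrt{3})$. The hard piece is the logarithmic integral $\int_{0}^{1}\!\bigl(-4\ln((3-x^{2})(1+x^{2})/4)/(1-x^{2})^{2} - 1\bigr)\,dx$, which I would attack by integration by parts, pairing the logarithm with a primitive of $(1-x^{2})^{-2}$ (namely $\tfrac{1}{2}x/(1-x^{2}) + \tfrac{1}{2}\tanh^{-1}x$) and then reducing the resulting dilogarithms at algebraic arguments such as $1/\sqrt{3}$ and $2\pm\sqrt{3}$. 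This step generates the $\pi^{2}$ contribution as well as the squared inverse-hyperbolic terms $\cosh^{-1}(2)^{2}$ and $\coth^{-1}(\sqrt{3})^{2}$, while the linear $\sqrt{3}\,\cosh^{-1}(2)$ piece emerges from the by-parts boundary. Collecting everything and multiplying by $15$ should reproduce the closed form stated.
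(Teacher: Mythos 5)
Your proposal follows the paper's own route exactly: the paper's proof of this theorem is precisely the HJ-algorithm applied to $\sum_{n=1}^{\infty} \frac{(1-x^2)^{2n} n^{2}}{4^{n}(n+1)}$, whose closed form (the paper's expression with denominator $(3-x^{2}-3x^{4}+x^{6})^{2}$) agrees with the one you obtain from the split $n^{2}/(n+1)=(n-1)+1/(n+1)$, followed by termwise integration over $[0,1]$. Your write-up simply supplies the details the paper omits (the beta-type integral, the term-ratio identification with the $_{5}F_{4}$, and the strategy for the final logarithmic integral), and your numerical bookkeeping, e.g.\ $T_{1}=\tfrac{1}{15}$, checks out.
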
%

\begin{proof}

  Use the HJ-algorithm.

\begin{eqnarray*}
\lefteqn{\sum_{n=1}^\infty \frac{(1 - x^2)^{2 n} n^2}{4^{n} (n + 1)} = }   \\
  &  &   -\frac{1}{(3 - x^{2} - 3 x^{4} + x^{6})^2}  4 (2 - 8 x^{4} + 8 x^{6} - 2 x^{8} + 9 \ln(\frac{1}{4} (3 + 2 x^{2} - x^{4})) +  12 x^{2} \ln(\frac{1}{4} (3 + 2 x^{2} - x^{4}))   \\
  &  &  -  2 x^{4} \ln(\frac{1}{4} (3 + 2 x^{2} - x^{4})) -   4 x^{6} \ln(\frac{1}{4} (3 + 2 x^{2} - x^{4}))    + x^{8} \ln(\frac{1}{4} (3 + 2 x^{2} - x^{4})))
\end{eqnarray*}

\end{proof}

\begin{theorem}%
\begin{eqnarray*}
\lefteqn{_{3} F _{2}   (\frac{3}{2},2,2;\frac{5}{3},\frac{13}{6};\frac{1}{4}     )    =   }    \\
  &  &    \frac{14}{2187}     (   216 +   \sqrt{3}    (18+53^{\frac{1}{3}})  \pi   -   3 ^ {\frac{5}{6}}   30 \tan^{-1}   (\frac{2+3^{\frac{1}{3}}}{3^\frac{5}{6}})      + 54 \ln (2)      \\ 
  &  &   + 3 ^ \frac{1}{3}  30 \ln(3-3^\frac{2}{3})   -  3 ^ \frac{1}{3}   15 \ln (3(3+3^{\frac{1}{3}}+3^{\frac{2}{3}}))           )
\end{eqnarray*}
\end{theorem}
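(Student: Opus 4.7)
The plan is to apply the HJ-algorithm exactly as in the proof of Theorem 15. Starting from the geometric identity $\sum_{n=1}^{\infty} n z^{n} = z/(1-z)^{2}$ with $z = (1-x^{3})^{2}/4$ and the factorization $4-(1-x^{3})^{2} = (1+x^{3})(3-x^{3})$, I first record the rational closed form
\begin{equation*}
\sum_{n=1}^{\infty} \frac{n (1-x^{3})^{2n}}{4^{n}} = \frac{4(1-x^{3})^{2}}{(1+x^{3})^{2}(3-x^{3})^{2}},
\end{equation*}
and then integrate both sides on $[0,1]$.

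On the left, termwise integration via (\ref{eq:notgeneral}) with $a=3$, $b=2$, $c=0$ produces $\sum_{n \geq 1} \tfrac{n}{4^{n}} \cdot \tfrac{\Gamma(4/3)\,\Gamma(2n+1)}{\Gamma(2n+4/3)}$. Applying the Legendre duplication formula to both gamma factors, shifting $n \mapsto m+1$, and rewriting the resulting gamma ratios as Pochhammer symbols recognizes the sum as an explicit constant (a rational combination of $\Gamma(1/2),\Gamma(2/3),\Gamma(4/3),\Gamma(7/6)$) times $_{3}F_{2}(\tfrac{3}{2},2,2;\tfrac{5}{3},\tfrac{13}{6};\tfrac{1}{4})$. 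On the right, I would factor $1+x^{3} = (1+x)(1-x+x^{2})$ and $3-x^{3} = (3^{1/3}-x)(x^{2}+3^{1/3}x+3^{2/3})$ (the latter quadratic being irreducible over $\mathbb{R}$, since its discriminant $3^{2/3}(3^{2/3}-4)$ is negative), and perform partial fractions against the two repeated linear factors and the two repeated irreducible quadratics. The linear pieces yield the $\ln 2$ and $\ln(3-3^{2/3})$ contributions; completing the square in $x^{2}+3^{1/3}x+3^{2/3}$ and in $1-x+x^{2}$ produces the $\ln(3(3+3^{1/3}+3^{2/3}))$ term together with the $\tan^{-1}((2+3^{1/3})/3^{5/6})$ term, the rational multiples of $\pi$ arising from the arctangent values at the endpoint $x=0$ (where $\tan^{-1}(3^{1/3}/3^{5/6}) = \tan^{-1}(1/\sqrt{3}) = \pi/6$, and similarly $\tan^{-1}(-1/\sqrt{3}) = -\pi/6$ from the $1-x+x^{2}$ factor). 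Equating the two evaluations of $\int_{0}^{1} 4(1-x^{3})^{2}/[(1+x^{3})^{2}(3-x^{3})^{2}]\,dx$ and solving for the hypergeometric value gives the stated formula.

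The main obstacle is the bookkeeping in the partial-fraction step: because the denominator carries two distinct repeated irreducible quadratics in addition to two repeated linear factors, one obtains a roughly twelve-unknown linear system whose solution is algebraically bulky, and the resulting antiderivatives must be recombined so that the complex contributions from the conjugate roots of $1-x+x^{2}$ and of $x^{2}+3^{1/3}x+3^{2/3}$ cancel and reassemble as the precise real combination $\tfrac{14}{2187}(216 + \sqrt{3}(18+5\cdot 3^{1/3})\pi - 30\cdot 3^{5/6}\tan^{-1}((2+3^{1/3})/3^{5/6}) + \cdots)$. A prudent safeguard is to verify numerical agreement to high precision before committing to the full symbolic simplification.
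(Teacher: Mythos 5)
Your proposal follows essentially the same route as the paper: the paper's proof of this theorem consists precisely of invoking the HJ-algorithm on the identity $\sum_{n\ge 1} n(1-x^3)^{2n}/4^n = 4(1-x^3)^2/(x^6-2x^3-3)^2$, whose right-hand side is exactly your $4(1-x^3)^2/[(1+x^3)^2(3-x^3)^2]$ since $x^6-2x^3-3=(x^3-3)(x^3+1)$, and you correctly supply the termwise integration, index shift, and partial-fraction details that the paper leaves implicit. The only blemish is a harmless slip in the discriminant of $x^2+3^{1/3}x+3^{2/3}$ (it is $-3^{5/3}$, not $3^{2/3}(3^{2/3}-4)$), which does not affect the conclusion that the quadratic is irreducible over $\mathbb{R}$.
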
%

\begin{proof}

  Use the HJ-algorithm.

\begin{equation*}
\sum_{n=1}^{\infty} \frac{(1 - x^3)^{2 n} n}{4^n} = \frac{4 (-1 + x^3)^2}{(-3 - 2 x^{3} + x^6)^2}
\end{equation*}%

\end{proof}

\begin{theorem}%
\begin{equation*}
_{4} F _{3} (\frac{3}{2}, 2, 2, 2; 1, \frac{13}{8}, \frac{17}{8}; \frac{1}{4})  = \frac{5}{1024}(88 + 63 \sqrt{2} \pi + 3^{\frac{1}{4}} 4 \cot^{-1}(3^{\frac{1}{4}}) + 
 126 \sqrt{2} \coth^{-1}(\sqrt{2}) +  3^{\frac{1}{4}}4 \coth^{-1}(3^{\frac{1}{4}}))
\end{equation*}
\end{theorem}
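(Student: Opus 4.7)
The plan is to apply the HJ-algorithm in exactly the pattern of Theorems~13--15. The first step is to identify the single-variable series identity $\sum_n h(x)\,j(n)$ whose term-by-term integration over $[0,1]$ reproduces the target $_4F_3$. The tell-tale lower parameters $\tfrac{13}{8}$ and $\tfrac{17}{8}$, differing by $\tfrac{1}{2}$, invite the Legendre duplication identity $(\tfrac{13}{4})_{2n} = 4^n (\tfrac{13}{8})_n (\tfrac{17}{8})_n$. Combined with the substitution $u = x^4$ in the beta integral, one obtains
\begin{equation*}
\int_0^1 x^8 (1-x^4)^{2n}\,dx \;=\; \tfrac{1}{4}\, B\!\bigl(\tfrac{9}{4},\,2n+1\bigr) \;=\; \frac{(2n)!}{9\cdot 4^n (\tfrac{13}{8})_n (\tfrac{17}{8})_n}.
\end{equation*}
Using $(2)_n = (n+1)!$, $(1)_n = n!$, and $(\tfrac{3}{2})_n = (2n+1)!/(4^n n!)$, a direct simplification shows that the $n$-th summand of the target hypergeometric series equals $\tfrac{9(2n+1)(n+1)^3}{4^n}$ times the above integral. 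Interchanging sum and integral,
\begin{equation*}
{}_4F_3\!\left(\tfrac{3}{2},2,2,2;\,1,\tfrac{13}{8},\tfrac{17}{8};\,\tfrac{1}{4}\right) \;=\; 9\int_0^1 x^8 \sum_{n=0}^{\infty} (2n+1)(n+1)^3 \left(\tfrac{(1-x^4)^2}{4}\right)^{\!n} dx.
\end{equation*}

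The inner sum closes in rational form by standard derivative-of-geometric-series identities. Writing $(2n+1)(n+1)^3 = 2(n+1)^4 - (n+1)^3$ and using the known generating functions $\sum (n+1)^3 w^n = (1+4w+w^2)/(1-w)^4$ and $\sum (n+1)^4 w^n = (1+11w+11w^2+w^3)/(1-w)^5$ gives
\begin{equation*}
\sum_{n=0}^{\infty} (2n+1)(n+1)^3\, w^n \;=\; \frac{1+19w+25w^2+3w^3}{(1-w)^5}.
\end{equation*}
Setting $w = (1-x^4)^2/4$ and using the factorization $4 - (1-x^4)^2 = (1+x^4)(3-x^4)$ converts the integrand into an explicit rational function of $x$ with numerator $x^8$ times a polynomial and denominator $\bigl((1+x^4)(3-x^4)\bigr)^5$.

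The remaining task, and the main obstacle, is the partial-fraction decomposition and antidifferentiation of this rational integrand. Over the reals the four irreducible quadratic factors are $x^2 \pm \sqrt{2}\,x + 1$ (from $1+x^4$) and $\sqrt{3}\pm x^2$ (from $3-x^4$), each occurring with multiplicity five. The linear-numerator partial-fraction pieces above $x^2 \pm \sqrt{2}\,x+1$ integrate to logarithms whose boundary values combine into $\ln(3+2\sqrt{2}) = 2\coth^{-1}\!\sqrt{2}$ (after cancellation of the spurious $\ln 2$ contribution), while the constant-numerator pieces yield arctangents $\arctan(\sqrt{2}\pm 1)$ that collapse to rational multiples of $\pi$. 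The factor $\sqrt{3}+x^2$ contributes the $\cot^{-1}(3^{1/4})$ piece, and $\sqrt{3}-x^2 = (3^{1/4}-x)(3^{1/4}+x)$ supplies the $\coth^{-1}(3^{1/4})$ piece. The rational-function pieces of the antiderivative, evaluated between $0$ and $1$, deliver the rational constant. Assembling everything into the claimed closed form
\begin{equation*}
\tfrac{5}{1024}\bigl(88 + 63\sqrt{2}\,\pi + 4\cdot 3^{1/4}\cot^{-1}(3^{1/4}) + 126\sqrt{2}\coth^{-1}\!\sqrt{2} + 4\cdot 3^{1/4}\coth^{-1}(3^{1/4})\bigr)
\end{equation*}
is a lengthy but mechanical bookkeeping exercise that, in keeping with the experimental-mathematics spirit of the paper, I would delegate to a computer algebra system.
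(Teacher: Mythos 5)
Your setup is correct and would indeed produce the stated value, but it is not the decomposition the paper uses, and the difference is worth noting. The paper also runs the HJ-algorithm on the base $(1-x^4)^{2n}/4^n$, but with the much lighter weight $j(n)=n^2$ and the plain beta integral $\int_0^1(1-x^4)^{2n}\,dx=(2n)!/\bigl(4^n(\tfrac58)_n(\tfrac98)_n\bigr)$: summing from $n=1$ and reindexing $n=m+1$ turns $(\tfrac58)_n(\tfrac98)_n$ into $(\tfrac{13}{8})_m(\tfrac{17}{8})_m$ and the $n^2$ into the $(m+1)^2$ needed to cancel the lower parameter $1$ against $m!$, so the whole series is $\tfrac{8}{45}$ times the target ${}_4F_3$ and the rational integrand is only
\begin{equation*}
-\frac{4(-1+x^4)^2(5-2x^4+x^8)}{(-3-2x^4+x^8)^3},
\end{equation*}
with the denominator cubed. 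You instead shift the lower parameters by inserting the factor $x^8$ into the beta integral and keep the sum at $n=0$, which forces the heavier weight $(2n+1)(n+1)^3$ and a denominator raised to the fifth power; your Eulerian-number computation giving $(1+19w+25w^2+3w^3)/(1-w)^5$ is correct, as is the factorization $4-(1-x^4)^2=(1+x^4)(3-x^4)$ and the identification of which quadratic factors produce the $\coth^{-1}\sqrt2$, $\pi$, $\cot^{-1}(3^{1/4})$ and $\coth^{-1}(3^{1/4})$ pieces. Both routes end by delegating a rational-function integral to a computer algebra system (the paper is no more explicit than you are at that stage), but the paper's choice of $j(n)$ buys a substantially simpler integrand; yours buys nothing extra here, though it illustrates that the parameter shift can be absorbed either into the index of summation or into the kernel.
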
%

\begin{proof}

  Use the HJ-algorithm.

\begin{equation*}
\sum_{n=1}^{\infty} \frac{(1 - x^4)^{2 n} n^2}{4^n} = -\frac{4 (-1 + x^4)^2 (5 - 2 x^{4} + x^8)}{(-3 - 2 x^4 + x^8)^3}
\end{equation*}

\end{proof}

\begin{example}%
\begin{equation*}
_{3}  F _{2}   (-\frac{3}{2},\frac{1}{4},\frac{3}{4};\frac{3}{2},2;1) =   - \frac{9}{40 \sqrt{2} \pi} +   \frac{55 \tanh^{-1}(\frac{1}{\sqrt{2}})}{16 \pi}  
\end{equation*}
\end{example}%

\begin{example}%
\begin{eqnarray*}
\lefteqn{  _{4}  F _{3}   (1,1,1,\frac{3}{2};\frac{7}{4},2,\frac{9}{4};1) =   }      \\  
  &  &      (-8 + \frac{7 \pi^{2}}{6} + \frac{1}{2} \ln^{2}(-i - 2^{\frac{1}{4}}) +         \ln^{2}(i - 2^{\frac{1}{4}}) + 2 \sqrt{2} \ln(1 - i 2^{\frac{1}{4}}) -       \ln(2) \ln(1 - i 2^{\frac{1}{4}})   \\ 
  &  &     + 2 \sqrt{2} \ln(1 + i 2^{\frac{1}{4}}) -      \ln(2) \ln(1 + i 2^{\frac{1}{4}}) - 2 \sqrt{2} \ln(-1 + 2^{\frac{1}{4}}) +     \ln(2) \ln(-1 + 2^{\frac{1}{4}})    \\ 
  &  &     - \ln(-i - 2^{\frac{1}{4}}) \ln(-1 + 2^{\frac{1}{4}}) -      \ln(i - 2^{\frac{1}{4}}) \ln(-1 + 2^{\frac{1}{4}}) +     \ln(1 - i 2^{\frac{1}{4}}) \ln(-1 + 2^{\frac{1}{4}})    \\ 
  &  &     +        \ln(1 + i 2^{\frac{1}{4}}) \ln(-1 + 2^{\frac{1}{4}}) - \frac{1}{2}  \ln^{2}(-1 + 2^{\frac{1}{4}}) +      \ln(1 + i 2^{\frac{1}{4}}) \ln(-   \frac{i}{-i + 2^{\frac{1}{4}}}  )        \\ 
  &  &     +        \ln(1 - i 2^{\frac{1}{4}})    \ln   (     -  \frac{2 i}{-i + 2^{\frac{1}{4}}}      ) -    \ln(-1 + 2^{\frac{1}{4}}) \ln   (    \frac{1 - i}{-i + 2^{\frac{1}{4}}}   )       +        \frac{1}{2} \ln ^{2}  (   -    \frac{1}{i + 2^{\frac{1}{4}}}     )    \\  
  &  &   +      \ln(1 - i 2^{\frac{1}{4}}   )    \ln(\frac{i}{i + 2^{\frac{1}{4}}}   )   +   \ln(1 + i 2^{\frac{1}{4}}) \ln   (  \frac{2 i}{i + 2^{\frac{1}{4}}}   )         +      i \pi (\ln   (   1 - i 2^{\frac{1}{4}}   )          \\ 
  &  &      -    \ln(1 + i 2^{\frac{1}{4}}   )      +          \ln    (    -     \frac{2 i}{-i + 2^{\frac{1}{4}}}             )             - \ln    ( \frac{1 - i}{-i + 2^{\frac{1}{4}}}           ) +     \ln   (  \frac{i}{i + 2^{\frac{1}{4}}}   )   -    \ln   (   \frac{1 + i}{i + 2^{\frac{1}{4}}}      )   )        \\
  &  &      -       \ln   (   -1 + 2^{\frac{1}{4}}   )    \ln   (    \frac{1 + i}{i + 2^{\frac{1}{4}}}        ) -       2 \sqrt{2}   \ln   (   1 + 2^{\frac{1}{4}}   ) + \ln(2)  \ln   (    1 + 2^{\frac{1}{4}}) - \ln   (    i - 2^{\frac{1}{4}}   )    \ln  (1 + 2^{\frac{1}{4}})          \\  
  &  &     +       \ln (1 - i 2^{\frac{1}{4}}) \ln(1 + 2^{\frac{1}{4}})        +        \ln(1 + i 2^{\frac{1}{4}}) \ln(1 + 2^{\frac{1}{4}}) -       \ln(-1 + 2^{\frac{1}{4}}) \ln(1 + 2^{\frac{1}{4}})      \\  
  &  &     -        \ln  (   -  \frac{1 + i}{-i + 2^{\frac{1}{4}}}   ) \ln(1 + 2^{\frac{1}{4}}) +      \ln   (    -  \frac{1}{i + 2^{\frac{1}{4}}}    ) \ln(1 + 2^{\frac{1}{4}})            \\  
  &  &   -       \ln( -  \frac{1 - i}{i + 2^{\frac{1}{4}}}   ) \ln(1 + 2^{\frac{1}{4}}) -       \frac{1}{2} \ln^{2}(1 + 2^{\frac{1}{4}})          +         \frac{1}{3}  (\pi^{2} - 6 \pi \cot^{-1} (2^{\frac{1}{4}}) +       6 \cot^{-1}(2^{\frac{1}{4}})^2)) \frac{15}{4}
\end{eqnarray*}
\end{example}%

  Given that $\sum_{n=1}^{\infty}      \frac{((1 - x^4)^{2 n}) x}{n^2} =   x \operatorname{Li}_{2} ((-1 + x^4)^2)$, integrate both sides of this
equation.

  What are alternative series representations of series of the form  $\sum_{n=2}^{\infty}   \frac{(\zeta (n) - 1) (p)_{an}}   {(q)_{bn}} $    ?

\begin{theorem}%
\begin{eqnarray*}
\lefteqn{\sum_{n=2}^{\infty}    \frac{(\zeta (n) - 1)(2 n)!}   {(\frac{5}{4})_{2n}}   = }    \\   
  &  &   \sum_{n=2}^{\infty}           -(\frac{1}{8} (8 -  \frac{  \sqrt{n} \ln(1 - (1 - \sqrt{n})^{\frac{1}{4}})  }{    (1 -  \sqrt{n})^{\frac{3}{4}}   }   -   \frac{i \sqrt{n} \ln(     1 - i (1 - \sqrt{n})^{\frac{1}{4}})}{ (1 - \sqrt{n})^{\frac{3}{4}}}         +     \frac{i \sqrt{n} \ln(1 + i (1 - \sqrt{n})^{\frac{1}{4}})}{(1 -    \sqrt{n})^{\frac{3}{4}}}      \\  
  &  &       +   \frac{\sqrt{n} \ln(       1 + (1 - \sqrt{n})^{\frac{1}{4}})}{(1 - \sqrt{n})^{\frac{3}{4}} }           +        \frac{\sqrt{n} \ln(1 - (1 +   \sqrt{n})^{\frac{1}{4}})}{(1 + \sqrt{n})^{\frac{3}{4}}}       +       \frac{i \sqrt{n} \ln(1 - i (1 + \sqrt{n})^{\frac{1}{4}})}{(1 +    \sqrt{n})^{\frac{3}{4}}}      \\  
  &  &     -     \frac{i \sqrt{n} \ln(     1 + i (1 + \sqrt{n})^{\frac{1}{4}})}{(1 + \sqrt{n})^{\frac{3}{4}}}     -  \frac{\sqrt{ n} \ln(1 + (1 + \sqrt{n})^{\frac{1}{4}})}{(1 + \sqrt{n})^{\frac{3}{4}}}      \\  
  &  &     - \frac{\sqrt{ n}}{32 (1 - n)^{\frac{3}{4}}}  (-4 (1 + \sqrt{n})^{\frac{3}{4}} \ln(-(1 - \sqrt{n})^{\frac{1}{4}})   4 i (1 + \sqrt{n})^{\frac{3}{4}} \ln(-i (1 - \sqrt{n})^{\frac{1}{4}})     \\ 
  &  &    +      4 i (1 + \sqrt{n})^{\frac{3}{4}} \ln(   i (1 - \sqrt{n})^{\frac{1}{4}}) + (1 + \sqrt{n})^{\frac{3}{4}} \ln(      1 - \sqrt{n})  +       4 (1 - \sqrt{n})^{\frac{3}{4}} \ln(-(1 + \sqrt{n})^{\frac{1}{4}})    \\  
  &  &  +     4 i (1 - \sqrt{n})^{\frac{3}{4}} \ln(-i (1 + \sqrt{n})^{\frac{1}{4}}) -      4 i (1 - \sqrt{n})^{\frac{3}{4}} \ln(  i (1 + \sqrt{n})^{\frac{1}{4}})      \\ 
  &  &   - (1 - \sqrt{n})^{\frac{3}{4}}   \ln(  1 + \sqrt{n})) - \frac{32}{45 n})  
\end{eqnarray*}
\end{theorem}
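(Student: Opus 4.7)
The plan is to combine a Beta-integral representation of the ratio $(2n)!/(5/4)_{2n}$ with the Dirichlet swap $\zeta(n)-1=\sum_{k=2}^{\infty}k^{-n}$, very much in the spirit of the HJ-algorithm used throughout this paper.

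First, using $\Gamma(5/4)=\Gamma(1/4)/4$ and $B(2n+1,1/4)=\int_{0}^{1}t^{2n}(1-t)^{-3/4}\,dt$, I would define $c_{n} := (2n)!/(5/4)_{2n} = \tfrac{1}{4}\int_{0}^{1} t^{2n}(1-t)^{-3/4}\,dt$, with in particular $c_{0}=1$ and $c_{1}=2/((5/4)(9/4))=32/45$. Substituting $\zeta(n)-1=\sum_{k\geq 2}k^{-n}$, exchanging the two summations (justified since for $k\geq 2$ the inner series is dominated geometrically in $1/k$), and summing the geometric series in $n$ gives
\[
\sum_{n=2}^{\infty}(\zeta(n)-1)c_{n} \;=\; \sum_{k=2}^{\infty}\Bigl[f(1/k) - 1 - \tfrac{32}{45k}\Bigr], \qquad f(z) := \frac{1}{4}\int_{0}^{1}\frac{(1-t)^{-3/4}}{1-zt^{2}}\,dt.
\]

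Next I would evaluate $f(1/k)$ in closed form for each $k\geq 2$. The substitution $u=(1-t)^{1/4}$ (so $t=1-u^{4}$, $dt=-4u^{3}\,du$) factors the denominator as $k-(1-u^{4})^{2}=(\sqrt{k}+1-u^{4})(\sqrt{k}-1+u^{4})$, and a partial-fraction split (the two factors sum to $2\sqrt{k}$) reduces $f(1/k)$ to $\tfrac{\sqrt{k}}{2}[I_{+}(k)+I_{-}(k)]$, where
\[
I_{+}(k)=\int_{0}^{1}\frac{du}{(\sqrt{k}+1)-u^{4}}, \qquad I_{-}(k)=\int_{0}^{1}\frac{du}{u^{4}+(\sqrt{k}-1)}.
\]
A second, complex partial-fraction decomposition over the four fourth-roots of $\sqrt{k}+1$ (for $I_{+}$) and of $-(\sqrt{k}-1)=1-\sqrt{k}$ (for $I_{-}$, complex-valued since $k\geq 2$) evaluates each integral as a linear combination of terms of the form $\ln(1-\omega(1\pm\sqrt{k})^{1/4})$ with $\omega\in\{1,-1,i,-i\}$, with prefactor $1/(4(1\pm\sqrt{k})^{3/4})$. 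These are precisely the eight logarithmic argument types appearing on the right-hand side, and the outer $\sqrt{k}/(8(1\pm\sqrt{k})^{3/4})$ factors match as well. Re-indexing $k\mapsto n$ and regrouping according to the displayed bracket structure---with the $-32/(45n)$ correction coming directly from the $-c_{1}/k$ term and the leading $-1$ absorbed into the $-\tfrac{1}{8}(8-\cdots)$ prefactor---then produces the stated identity.

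The main obstacle I expect is bookkeeping: tracking branch choices of the fractional powers $(1-\sqrt{k})^{1/4}$ and $(1-\sqrt{k})^{3/4}$ (both complex for $k\geq 2$) consistently through the four-root partial-fraction decomposition, and aggregating the many resulting logarithmic boundary values at $u=0$ and $u=1$ into the precise combination displayed on the right. The analytic steps themselves are routine once the integral representation is in hand; the real work lies in the symbolic accounting of residues and branches.
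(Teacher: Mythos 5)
Your proposal is correct and follows essentially the same route as the paper: the paper's proof integrates the geometric series $\sum_{n\ge 2}(1-x^4)^{2n}/c^n = (1-x^4)^4/(c(c-1+2x^4-x^8))$ over $x\in[0,1]$, using the Beta integral $\int_0^1(1-x^4)^{2n}\,dx=(2n)!/(\tfrac{5}{4})_{2n}$ termwise and then summing over $c\ge 2$ via $\zeta(n)-1=\sum_{c\ge 2}c^{-n}$, which is exactly your $f(1/k)-1-\tfrac{32}{45k}$ after the substitution $t=1-x^4$. Your partial-fraction evaluation over the fourth roots of $1\pm\sqrt{k}$ is precisely the ``rather challenging'' integration of the right-hand side that the paper alludes to, only carried out more explicitly.
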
%

\begin{proof}

  Consider:

\begin{equation*}
\label{eq:expan1}
\sum_{n=2}^{\infty} \frac{(1-x^4)^{2n}}{c^n} =   \frac{(-1 + x^4)^4}{c (-1 + c + 2 x^{4} - x^8)}
\end{equation*}

  Integrate both sides of the above equation; integrating the right-hand side proves to be rather challenging.

\end{proof}

  A question which immediately comes to mind is: ``Can the right-hand side of {\bf Theorem 17} be simplified at all?" We presently leave open
to discussion how such $\zeta  (x)$ expansions can be manipulated and simplified. We should note that such $\zeta  (x) $ expansions may be expressed in integral
 form by evaluating series of the form $\sum_{n=2}^{\infty} \zeta  (x) f(x)^{n}$. Such definite integrals are often rather striking.

  The following relationship is related to {\bf Theorem 17}.

\begin{theorem}%
\begin{eqnarray*}
\lefteqn{ _{3} F _{2}   ( 1,3,\frac{5}{2}   ; \frac{21}{8}  ,  \frac{25}{8}     ;      \frac{1}{n})  \frac{8192}{n^{2}13260}       =   }    \\ 
  &  &     -(\frac{1}{8} (8 -  \frac{  \sqrt{n} \ln(1 - (1 - \sqrt{n})^{\frac{1}{4}})  }{    (1 -  \sqrt{n})^{\frac{3}{4}}   }   -   \frac{i \sqrt{n} \ln(     1 - i (1 - \sqrt{n})^{\frac{1}{4}})}{ (1 - \sqrt{n})^{\frac{3}{4}}}         +     \frac{i \sqrt{n} \ln(1 + i (1 - \sqrt{n})^{\frac{1}{4}})}{(1 -    \sqrt{n})^{\frac{3}{4}}}      \\    
  &  &       +   \frac{\sqrt{n} \ln(       1 + (1 - \sqrt{n})^{\frac{1}{4}})}{(1 - \sqrt{n})^{\frac{3}{4}} }           +        \frac{\sqrt{n} \ln(1 - (1 +   \sqrt{n})^{\frac{1}{4}})}{(1 + \sqrt{n})^{\frac{3}{4}}}       +       \frac{i \sqrt{n} \ln(1 - i (1 + \sqrt{n})^{\frac{1}{4}})}{(1 +    \sqrt{n})^{\frac{3}{4}}}      \\  
  &  &     -     \frac{i \sqrt{n} \ln(     1 + i (1 + \sqrt{n})^{\frac{1}{4}})}{(1 + \sqrt{n})^{\frac{3}{4}}}     -  \frac{\sqrt{ n} \ln(1 + (1 + \sqrt{n})^{\frac{1}{4}})}{(1 + \sqrt{n})^{\frac{3}{4}}}      \\  
  &  &     - \frac{\sqrt{ n}}{32 (1 - n)^{\frac{3}{4}}}  (-4 (1 + \sqrt{n})^{\frac{3}{4}} \ln(-(1 - \sqrt{n})^{\frac{1}{4}})   4 i (1 + \sqrt{n})^{\frac{3}{4}} \ln(-i (1 - \sqrt{n})^{\frac{1}{4}})     \\ 
  &  &    +      4 i (1 + \sqrt{n})^{\frac{3}{4}} \ln(   i (1 - \sqrt{n})^{\frac{1}{4}}) + (1 + \sqrt{n})^{\frac{3}{4}} \ln(      1 - \sqrt{n})  +       4 (1 - \sqrt{n})^{\frac{3}{4}} \ln(-(1 + \sqrt{n})^{\frac{1}{4}})    \\  
  &  &  +     4 i (1 - \sqrt{n})^{\frac{3}{4}} \ln(-i (1 + \sqrt{n})^{\frac{1}{4}}) -      4 i (1 - \sqrt{n})^{\frac{3}{4}} \ln(  i (1 + \sqrt{n})^{\frac{1}{4}})      \\ 
  &  &   - (1 - \sqrt{n})^{\frac{3}{4}}   \ln(  1 + \sqrt{n})) - \frac{32}{45 n})  
\end{eqnarray*}
\end{theorem}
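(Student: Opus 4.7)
The plan is to apply the HJ-algorithm directly to the single-parameter identity that already underlies \textbf{Theorem 17}, without the outer summation on $n$ that introduced the Riemann zeta values. Concretely, I would take
\begin{equation*}
\sum_{m=2}^{\infty}\frac{(1-x^{4})^{2m}}{n^{m}}=\frac{(-1+x^{4})^{4}}{n(-1+n+2x^{4}-x^{8})},
\end{equation*}
which is the geometric identity used in the proof of \textbf{Theorem 17} (with the dummy variable $c$ renamed to $n$), and integrate both sides on $x\in[0,1]$.

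On the left, term-by-term integration via the beta evaluation
\begin{equation*}
\int_{0}^{1}(1-x^{4})^{2m}\,dx=\frac{\Gamma(1/4)\,\Gamma(2m+1)}{4\,\Gamma(2m+5/4)}=\frac{(2m)!}{(5/4)_{2m}}
\end{equation*}
collapses the sum to $\sum_{m=2}^{\infty}\frac{(2m)!}{n^{m}(5/4)_{2m}}$. I would then reindex $m=k+2$ and apply the Pochhammer duplication identity $(a)_{2k}=4^{k}(a/2)_{k}((a+1)/2)_{k}$ to the factors $(5)_{2k}=(2k+4)!/4!$ and $(21/4)_{2k}=(5/4)_{2k+4}/(5/4)_{4}$; the powers of $4$ cancel and the summand becomes
\begin{equation*}
\frac{8192}{13260\,n^{2}}\cdot\frac{(1)_{k}(3)_{k}(5/2)_{k}}{(21/8)_{k}(25/8)_{k}\,k!}\,n^{-k},
\end{equation*}
which identifies the left-hand side of the integrated identity with $\tfrac{8192}{13260\,n^{2}}\,{}_{3}F_{2}(1,3,5/2;21/8,25/8;1/n)$.

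For the right-hand side I would evaluate $\int_{0}^{1}\frac{(-1+x^{4})^{4}}{n(-1+n+2x^{4}-x^{8})}\,dx$ by long division followed by partial fractions. The denominator factors as $n-(1-x^{4})^{2}=(\sqrt{n}-1+x^{4})(\sqrt{n}+1-x^{4})$, and each of these quartics in $x$ splits into four linear factors with roots $i^{\,j}(1\mp\sqrt{n})^{1/4}$ for $j=0,1,2,3$; the eight resulting residues produce the eight logarithmic blocks $\ln(1\pm i^{\,j}(1\pm\sqrt{n})^{1/4})$ visible on the right-hand side of the theorem, while the polynomial quotient from the long division furnishes the rational and $1/n$ contributions after evaluation at the endpoints $x=0$ and $x=1$.

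The main obstacle will be the bookkeeping in that partial-fraction step: the roots live in the field $\mathbb{Q}(i,(1\pm\sqrt{n})^{1/4})$, so the eight logarithmic residues must be combined with consistent branch choices in order to simplify to the precise closed form stated. This is essentially the same difficulty encountered in \textbf{Theorem 17}; indeed, the present identity is nothing but the inner, single-$n$ summand of Theorem 17, and Theorem 17 is recovered from it by summing on $n\geq 2$ and invoking $\sum_{n\geq 2}n^{-m}=\zeta(m)-1$.
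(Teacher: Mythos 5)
Your proposal is correct and follows exactly the paper's route: the paper's entire proof of this theorem is the instruction to integrate the geometric identity $\sum_{m\ge 2}(1-x^4)^{2m}c^{-m}=\frac{(1-x^4)^4}{c(c-1+2x^4-x^8)}$ from the proof of Theorem 17 over $x\in[0,1]$, which is precisely what you do. Your write-up merely makes explicit the beta-integral evaluation of the left side, the reindexing into the stated ${}_3F_{2}$, and the partial-fraction evaluation of the right side, all of which the paper leaves implicit.
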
%

\begin{proof}

  Integrate (\ref{eq:expan1}).

\end{proof}

\begin{theorem}%
\begin{equation*}
_{4}   F   _{3}     (1,1,\frac{5}{2},3;2,\frac{21}{8},\frac{25}{8};1)   =      \frac{221}{276480}      (12584 - 2^{\frac{1}{4}} 1305 \pi -2^{\frac{1}{4}} 2610 \coth^{-1}(2^{\frac{1}{4}}) +    2^{\frac{1}{4}} 2610  \tan^{-1}(2^{\frac{1}{4}}))
\end{equation*}
\end{theorem}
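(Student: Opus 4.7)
The plan is to apply the HJ-algorithm along the lines of \textbf{Theorem 18}, but with an additional logarithmic step that accounts for the extra $(1)_{k}/(2)_{k}=1/(k+1)$ factor appearing in this ${}_{4}F_{3}$. The generating identity I would use is
\begin{equation*}
\sum_{n=2}^{\infty}\frac{(1-x^{4})^{2n}}{n-1} \;=\; -(1-x^{4})^{2}\,\ln\!\bigl(x^{4}(2-x^{4})\bigr),
\end{equation*}
obtained from $\sum_{n\ge 1}y^{n}/n=-\ln(1-y)$ with $y=(1-x^{4})^{2}$ and the shift $n\mapsto n-1$ on the index. This plays the role of the HJ-algorithm ``$h(x)j(n)$'' sum.

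Next I would integrate both sides from $0$ to $1$ and interchange summation with integration on the left. Applying (\ref{eq:notgeneral}) with $a=4,\;b=2,\;c=d=g=0,\;f=1$ gives $\int_{0}^{1}(1-x^{4})^{2n}\,dx=(2n)!/(5/4)_{2n}$, so the left-hand side becomes $\sum_{n=2}^{\infty}(2n)!/\bigl((n-1)(5/4)_{2n}\bigr)$. Substituting $n=k+2$ and invoking the duplications $(5/4)_{2k+4}=(5/4)_{4}\,4^{k}(21/8)_{k}(25/8)_{k}$ and $(2k+4)!=24\cdot 4^{k}(5/2)_{k}(3)_{k}$, together with $(1)_{k}/(2)_{k}=1/(k+1)$, identifies this sum as $(6144/9945)\cdot {}_{4}F_{3}(1,1,\tfrac{5}{2},3;2,\tfrac{21}{8},\tfrac{25}{8};1)$. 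The theorem thereby reduces to the evaluation of
\begin{equation*}
I \;=\; \int_{0}^{1}(1-x^{4})^{2}\,\ln\!\bigl(x^{4}(2-x^{4})\bigr)\,dx.
\end{equation*}

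To compute $I$, I would split $\ln(x^{4}(2-x^{4}))=4\ln x+\ln(2-x^{4})$ and expand $(1-x^{4})^{2}=1-2x^{4}+x^{8}$. The three $\ln x$ pieces are elementary via $\int_{0}^{1}x^{k}\ln x\,dx=-(k+1)^{-2}$ and sum to $-7552/2025$. For each of the three $\ln(2-x^{4})$ pieces, integration by parts with $u=\ln(2-x^{4})$, $dv=x^{k}\,dx$ annihilates the boundary and leaves $4(k+1)^{-1}\int_{0}^{1}x^{k+4}/(2-x^{4})\,dx$; iterated polynomial division in $x^{4}$ reduces each of these to a rational number plus an integer multiple of the single key integral
\begin{equation*}
K \;=\; \int_{0}^{1}\frac{dx}{2-x^{4}} \;=\; \frac{1}{2^{7/4}}\!\left(\coth^{-1}(2^{1/4})+\tfrac{\pi}{2}-\tan^{-1}(2^{1/4})\right),
\end{equation*}
which follows from the partial-fraction decomposition $(2-x^{4})^{-1}=\tfrac{1}{2\sqrt{2}}\bigl((\sqrt{2}-x^{2})^{-1}+(\sqrt{2}+x^{2})^{-1}\bigr)$ and the standard evaluations of $\int dx/(a^{2}\pm x^{2})$. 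Collecting, I expect $I=-12584/2025+(232/45)\,K$. Multiplying by $-9945/6144$ and simplifying with $9945=45\cdot 221$ and $232/2^{7/4}=58\cdot 2^{1/4}$ then collapses the outer constant to $221/276480$ and rearranges the $K$-contribution into the four claimed coefficients $12584,\,-1305\cdot 2^{1/4}\pi,\,-2610\cdot 2^{1/4}\coth^{-1}(2^{1/4}),\,+2610\cdot 2^{1/4}\tan^{-1}(2^{1/4})$.

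The hard part is not analytic, since every integration step is either elementary or a partial-fraction computation. The obstacle is rather purely arithmetic bookkeeping: the half-integer Pochhammer duplications, the $(k+1)^{-1}$ weights from integration by parts, and the mixed simplification $232/2^{7/4}=58\cdot 2^{1/4}$ must all be tracked so that $9945/(6144\cdot 2025)$ collapses precisely to $221/276480$ and the $K$-terms reorganize as $2610\cdot 2^{1/4}$ and $1305\cdot 2^{1/4}$. A single misplaced factor of $4$ in the duplication relations, or of $k+1$ from the integration by parts, will hide the identity.
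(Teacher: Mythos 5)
Your proposal is correct; I checked the arithmetic and it all holds: $\int_0^1(1-x^4)^{2n}\,dx=(2n)!/(5/4)_{2n}$, the reindexing constant $6144/9945$ with $(5/4)_4=9945/256$, the evaluation $I=-\frac{12584}{2025}+\frac{232}{45}K$, and the final collapse via $9945=45\cdot221$, $45\cdot6144=276480$, $58\cdot45=2610$ to $\frac{221}{276480}\bigl(12584-1305\cdot2^{1/4}\pi-2610\cdot2^{1/4}\coth^{-1}(2^{1/4})+2610\cdot2^{1/4}\tan^{-1}(2^{1/4})\bigr)$. However, your execution is genuinely different from the paper's. The paper's entire proof is ``Integrate both sides of Theorem 18,'' i.e., it takes the already-integrated closed form $_3F_2(1,3,\tfrac52;\tfrac{21}{8},\tfrac{25}{8};\tfrac1n)\,\tfrac{8192}{13260\,n^2}=R(n)$ and integrates it in the parameter $n$ over $[1,\infty)$, using $\int_1^\infty n^{-k-2}\,dn=\tfrac1{k+1}$ to supply the extra $(1)_k/(2)_k$; this requires antidifferentiating the enormous nested-radical-and-logarithm expression $R(n)$. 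You instead evaluate the same underlying double integral $\int_1^\infty\!\int_0^1\sum_{n\ge2}(1-x^4)^{2n}c^{-n}\,dx\,dc$ in the opposite order: $\int_1^\infty c^{-n}\,dc=\tfrac1{n-1}$ converts the geometric kernel into the logarithmic one $-(1-x^4)^2\ln\bigl(x^4(2-x^4)\bigr)$, after which a single elementary $x$-integral (power-times-log integrals, polynomial division in $x^4$, and $K=\int_0^1\frac{dx}{2-x^4}$) finishes the job. What your order of integration buys is verifiability and economy: every step is an explicit elementary computation with one key constant $K$, whereas the paper's stated route leaves the hard antidifferentiation of $R(n)$ entirely implicit.
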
%

\begin{proof}

  Integrate both sides of {\bf Theorem 18}.

\end{proof}

\begin{theorem}%
\begin{equation}
_{3} F _{2}     ( 1,  \frac{3}{2}   ,   3  ;    \frac{21}{8}    ,     \frac{25}{8}  ;  1    )    =         \frac{221 }{4096}   (   8 + 2^{\frac{1}{4}}  27  \pi +  2^{\frac{1}{4}}  54 \coth^{-1}(2^{\frac{1}{4}}) -   2^{\frac{1}{4}}  54  \tan^{-1}(2^{\frac{1}{4}})   ) 
\end{equation}
\end{theorem}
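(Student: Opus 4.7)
The plan is to derive Theorem 20 from Theorem 18 by multiplying by the weight $y^{1/2}$ and integrating on $[0,1]$; this will convert the upper parameter $\frac{5}{2}$ in Theorem 18 into the $\frac{3}{2}$ of Theorem 20. Since $\int_0^1 y^{n+\frac{1}{2}}\,dy = \frac{2}{2n+3}$ and $\frac{(3/2)_n}{(5/2)_n} = \frac{3}{2n+3}$, termwise integration of the defining hypergeometric series gives
\begin{equation*}
\int_0^1 y^{1/2}\, {}_{3}F_{2}\!\left(1,3,\frac{5}{2};\frac{21}{8},\frac{25}{8};y\right) dy \;=\; \frac{2}{3}\, {}_{3}F_{2}\!\left(1,\frac{3}{2},3;\frac{21}{8},\frac{25}{8};1\right).
\end{equation*}
So the first task is to check this identity and the admissibility of the interchange (the ${}_{3}F_{2}$ on the left has a mild $(1-y)^{-3/4}$ singularity at $y=1$, which is integrable against $y^{1/2}$).

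Next I would recast Theorem 18 as an integral representation. Starting from the geometric-series identity $\sum_{n=2}^{\infty}(1-x^4)^{2n}/c^{n} = (1-x^4)^4/[c(c-1+2x^4-x^8)]$ (used already in Theorems 17 and 18) and integrating in $x$ on $[0,1]$, together with $\int_0^1 (1-x^4)^{2n}\,dx = (2n)!/(5/4)_{2n}$, the Legendre duplication formula $(5/4)_{2m} = 4^m (5/8)_m (9/8)_m$, and the shift $n = m+2$, one finds
\begin{equation*}
{}_{3}F_{2}\!\left(1,3,\frac{5}{2};\frac{21}{8},\frac{25}{8};y\right) \;=\; \frac{9945}{6144}\int_0^1 \frac{(1-x^4)^4}{1 - y(1-x^4)^2}\,dx.
\end{equation*}
Substituting this into the preceding display and swapping the order of integration reduces the problem to a single $x$-integral. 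The inner integral $\int_0^1 y^{1/2}/(1-a y)\,dy$ with $a=(1-x^4)^2$ evaluates via $y = u^2$ to $-2/(1-x^4)^2 + 2\tanh^{-1}(1-x^4)/(1-x^4)^3$, so after multiplying by the $(1-x^4)^4$ factor the surviving integrand is $-2(1-x^4)^2 + 2(1-x^4)\tanh^{-1}(1-x^4)$.

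The first piece is the elementary $\int_0^1 (1-x^4)^2\,dx = \frac{32}{45}$. For the second piece I would integrate by parts with $u = \tanh^{-1}(1-x^4)$ and $dv = (1-x^4)\,dx$; the boundary terms vanish and the remaining integral collapses after polynomial division to $\frac{4}{5}\int_0^1 \frac{5-x^4}{2-x^4}\,dx = \frac{4}{5} + \frac{12}{5}\int_0^1 \frac{dx}{2-x^4}$. The partial-fraction split $\frac{1}{2-x^4} = \frac{1}{2\sqrt{2}}\left[\frac{1}{\sqrt{2}-x^2} + \frac{1}{\sqrt{2}+x^2}\right]$ then produces $\tan^{-1}(2^{-1/4})$ and $\tanh^{-1}(2^{-1/4})$, and via $\tan^{-1}(1/t) = \pi/2 - \tan^{-1}(t)$ and $\tanh^{-1}(1/t) = \coth^{-1}(t)$ these become the $\pi$, $\coth^{-1}(2^{1/4})$, and $\tan^{-1}(2^{1/4})$ appearing in the statement.

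Finally, collecting with the overall prefactor $\frac{3}{2}\cdot\frac{9945}{6144} = \frac{9945}{4096} = \frac{45\cdot 221}{4096}$ produces the right-hand side of the theorem. The main obstacle will be the careful bookkeeping of the multiplicative constants, so that $\frac{221}{4096}$ and the integer coefficients $8$, $27$, $54$ emerge correctly; the only truly nonroutine analytic step is recognizing that integration by parts collapses $\int_0^1 (1-x^4)\tanh^{-1}(1-x^4)\,dx$ to the elementary rational integral $\int_0^1 dx/(2-x^4)$.
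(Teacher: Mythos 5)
Your proposal is correct, and I checked the arithmetic end to end: the representation ${}_{3}F_{2}(1,3,\tfrac{5}{2};\tfrac{21}{8},\tfrac{25}{8};y)=\tfrac{9945}{6144}\int_0^1\frac{(1-x^4)^4}{1-y(1-x^4)^2}\,dx$ is right (the $n=2$ term gives $\tfrac{4!}{(5/4)_4}=\tfrac{6144}{9945}$, consistent with the paper's $\tfrac{8192}{13260}=\tfrac{2048}{3315}$), and the final assembly $\tfrac{9945}{2048}\bigl(\tfrac{4}{45}+\tfrac{12}{5}J\bigr)$ with $J=2^{-7/4}\bigl(\tfrac{\pi}{2}+\coth^{-1}(2^{1/4})-\tan^{-1}(2^{1/4})\bigr)$ reproduces the stated right-hand side exactly, including the coefficients $8$, $27$, $54$ and the prefactor $\tfrac{221}{4096}$. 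Your high-level transform is in fact the same one the paper intends: its entire proof is the instruction ``Replace $x$ by $x^2$ and integrate,'' which, applied to the series argument, is precisely your weight $y^{1/2}\,dy$ on $[0,1]$ after the substitution $y=t^2$ (both produce the factor $\tfrac{3/2}{m+3/2}=(3/2)_m/(5/2)_m$ converting the upper parameter $\tfrac52$ into $\tfrac32$). Where you genuinely depart from the paper is in the execution: the paper's route would require integrating the enormous closed form on the right-hand side of Theorem 18 in the argument variable, whereas you push the transform through the double integral, invoke Tonelli (legitimately, since the integrand is positive and the $(1-y)^{-3/4}$ singularity is integrable against $y^{1/2}$), and reduce everything to $\int_0^1(1-x^4)^2\,dx$ and, after an integration by parts whose boundary terms you correctly verify vanish, to the elementary $\int_0^1\frac{dx}{2-x^4}$. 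This buys a proof that is actually checkable and self-contained, at the cost of re-deriving the integral representation behind Theorem 18; the paper's one-line instruction is shorter but leaves all of the real work, including the determination of the constant $\tfrac{221}{4096}$, unstated.
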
%

\begin{proof}

   Replace $x$ by $x^2$ of and integrate.

\end{proof}

  Clearly, integrating ``generalized" power series may lead to outlandish results. Let us continue in a similar vein.

\begin{theorem}%
\begin{eqnarray*}
\lefteqn{          _{5} F _{4}      (1,\frac{9}{4},\frac{5}{2},\frac{11}{4},3;\frac{37}{16},\frac{41}{16},\frac{45}{16},\frac{49}{16} ;\frac{1}{c})     \frac{8388608  }{15862275 c^2}  =    }      \\    
  &  &      \frac{1}{c}       (       -   \frac{2048}{3315}   - c    +          \frac{ c^{\frac{5}{4}}     \ln     (     1 - (1 -     c^{\frac{1}{4}}    )^{\frac{1}{4}}   )    }{16    (   1 - c^{\frac{1}{4}})^{\frac{3}{4}}}            +    \frac{     i c^{\frac{5}{4}}        \ln     (      1 - i (1 - c^{\frac{1}{4}})^{\frac{1}{4}}     ) }{    16     (1    -    c^{\frac{1}{4}})^{\frac{3}{4}} }            -     \frac{  i c^{\frac{5}{4}} \ln(1 + i (1 - c^{\frac{1}{4}})^{\frac{1}{4}}) }{ 16 (1 - c^{\frac{1}{4}})^{\frac{3}{4}}}         \\  
  &  &       -      \frac{   c^{\frac{5}{4}} \ln(1 + (1 - c^{\frac{1}{4}})^{\frac{1}{4}}) }{  16 (1 - c^{\frac{1}{4}})^{\frac{3}{4}}}               +  \frac{     i c^{\frac{5}{4}} \ln(1 - (1 - i c^{\frac{1}{4}})^{\frac{1}{4}}) }{  16 (1 - i c^{\frac{1}{4}})^{\frac{3}{4}}}        - \frac{ c^{\frac{5}{4}} \ln(1 - i (1 - i c^{\frac{1}{4}})^{\frac{1}{4}}) }{ 16 (1 - i c^{\frac{1}{4}})^{\frac{3}{4}}}           +  \frac{  c^{\frac{5}{4}} \ln(1 + i (1 - i c^{\frac{1}{4}})^{\frac{1}{4}}) }{ 16 (1 - i c^{\frac{1}{4}})^{\frac{3}{4}}}          \\
  &  &       -      \frac{  i c^{\frac{5}{4}} \ln(1 + (1 - i c^{\frac{1}{4}})^{\frac{1}{4}}) }{  16 (1 - i c^{\frac{1}{4}})^{\frac{3}{4}}}            -  \frac{  i c^{\frac{5}{4}} \ln(1 - (1 + i c^{\frac{1}{4}})^{\frac{1}{4}}) }{ 16 (1 + i c^{\frac{1}{4}})^{\frac{3}{4}}}    +   \frac{  c^{\frac{5}{4}} \ln(1 - i (1 + i c^{\frac{1}{4}})^{\frac{1}{4}}) }{  16 (1 + i c^{\frac{1}{4}})^{\frac{3}{4}}}          -  \frac{  c^{\frac{5}{4}} \ln(1 + i (1 + i c^{\frac{1}{4}})^{\frac{1}{4}}) }{  16 (1 + i c^{\frac{1}{4}})^{\frac{3}{4}}}           \\
  &  &      +      \frac{ i c^{\frac{5}{4}} \ln(1 + (1 + i c^{\frac{1}{4}})^{\frac{1}{4}})}{ 16 (1 + i c^{\frac{1}{4}})^{\frac{3}{4}}}       - \frac{  c^{\frac{5}{4}} \ln(1 - (1 + c^{\frac{1}{4}})^{\frac{1}{4}})  }{16 (1 + c^{\frac{1}{4}})^{\frac{3}{4}}}       -  \frac{  i c^{\frac{5}{4}} \ln(1 - i (1 + c^{\frac{1}{4}})^{\frac{1}{4}})}{ 16 (1 + c^{\frac{1}{4}})^{\frac{3}{4}}}           +   \frac{   i c^{\frac{5}{4}} \ln(1 + i (1 + c^{\frac{1}{4}})^{\frac{1}{4}})}{ 16 (1 + c^{\frac{1}{4}})^{\frac{3}{4}}}        \\ 
  &  &       +  \frac{c^{\frac{5}{4}} \ln(1 + (1 + c^{\frac{1}{4}})^{\frac{1}{4}}) }{ 16 (1 + c^{\frac{1}{4}})^{\frac{3}{4}}}            -  \frac{1}{64}       c^{\frac{5}{4}}        \frac{(4 \ln(-(1 - c^{\frac{1}{4}})^{\frac{1}{4}})}{(1 - c^{\frac{1}{4}})^{\frac{3}{4}} }     +   \frac{ 4 i \ln(-i (1 - c^{\frac{1}{4}})^{\frac{1}{4}})}{(1 - c^{\frac{1}{4}})^{\frac{3}{4}}}           -  \frac{ 4 i \ln(i (1 - c^{\frac{1}{4}})^{\frac{1}{4}})}{(1 - c^{\frac{1}{4}})^{\frac{3}{4}} }                  \\  
  &  &          -            \frac{\ln(1 - c^{\frac{1}{4}}) }{  (1 - c^{\frac{1}{4}})^{\frac{3}{4}} }                +       \frac{ 4 i \ln(-(1 - i c^{\frac{1}{4}})^{\frac{1}{4}}) }{ (1 - i c^{\frac{1}{4}})^{\frac{3}{4}} }         -          \frac{   4 \ln(-i (1 - i c^{\frac{1}{4}})^{\frac{1}{4}})}{  (1 - i c^{\frac{1}{4}})^{\frac{3}{4}}}            +      \frac{      4 \ln(i (1 - i c^{\frac{1}{4}})^{\frac{1}{4}})  }{  (1 - i c^{\frac{1}{4}})^{\frac{3}{4}}}                  -    \frac{   i \ln(1 - i c^{\frac{1}{4}}) }{  (1 - i c^{\frac{1}{4}})^{\frac{3}{4}} }           \\   
  &  &                -     \frac{     4 i \ln(-(1 + i c^{\frac{1}{4}})^{\frac{1}{4}})}{(1 + i c^{\frac{1}{4}})^{\frac{3}{4}}}          +      \frac{    4 \ln(-i (1 + i c^{\frac{1}{4}})^{\frac{1}{4}})}{  (1 + i c^{\frac{1}{4}})^{\frac{3}{4}}}      -   \frac{     4 \ln(i (1 + i c^{\frac{1}{4}})^{\frac{1}{4}})}{ (1 + i c^{\frac{1}{4}})^{\frac{3}{4}}}             +      \frac{    i \ln(1 + i c^{\frac{1}{4}})}{  (1 + i c^{\frac{1}{4}})^{\frac{3}{4}} }                -     \frac{     4 \ln(-(1 + c^{\frac{1}{4}})^{\frac{1}{4}}) }{    (1 + c^{\frac{1}{4}})^{\frac{3}{4}}}             \\ 
  &  &              -    \frac{     4 i \ln(-i (1 + c^{\frac{1}{4}})^{\frac{1}{4}}) }{  (1 + c^{\frac{1}{4}})^{\frac{3}{4}} }          +     \frac{    4 i \ln(i (1 + c^{\frac{1}{4}})^{\frac{1}{4}}) }{  (1 + c^{\frac{1}{4}})^{\frac{3}{4}}}     +    \frac{  \ln(1 + c^{\frac{1}{4}}) }{  (1 + c^{\frac{1}{4}})^{\frac{3}{4}})}      )            
\end{eqnarray*}  
\end{theorem}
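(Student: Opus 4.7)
The plan is to apply the HJ-algorithm exactly as in Theorems~17 and~18, but with $(1-x^{4})^{2n}$ replaced by $(1-x^{4})^{4n}$ so that the integrated Pochhammer structure lines up with the four-shift denominator parameters $\tfrac{37}{16},\tfrac{41}{16},\tfrac{45}{16},\tfrac{49}{16}$. The starting point is the elementary geometric identity
\begin{equation*}
\sum_{n=2}^{\infty}\frac{(1-x^{4})^{4n}}{c^{n}} \;=\; \frac{(1-x^{4})^{8}}{c\bigl(c-(1-x^{4})^{4}\bigr)},
\end{equation*}
which is the natural $4n$-analogue of~(\ref{eq:expan1}).

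Integrating both sides over $[0,1]$ and reindexing $n\mapsto m+2$ on the left gives
\begin{equation*}
\frac{1}{c^{2}}\sum_{m=0}^{\infty}\frac{1}{c^{m}}\int_{0}^{1}(1-x^{4})^{4m+8}\,dx \;=\; \frac{\Gamma(1/4)\,\Gamma(9)}{4c^{2}\,\Gamma(37/4)}\sum_{m=0}^{\infty}\frac{(9)_{4m}}{(37/4)_{4m}}\,c^{-m}.
\end{equation*}
Applying Gauss's multiplication formula $(\alpha)_{4m}=4^{4m}\prod_{j=0}^{3}\bigl((\alpha+j)/4\bigr)_{m}$ to $\alpha=9$ and $\alpha=37/4$ cancels the $4^{4m}$ factors and converts the series into ${}_{5}F_{4}\bigl(1,\tfrac{9}{4},\tfrac{5}{2},\tfrac{11}{4},3;\tfrac{37}{16},\tfrac{41}{16},\tfrac{45}{16},\tfrac{49}{16};1/c\bigr)$. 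A direct gamma-function reduction gives $\Gamma(1/4)\Gamma(9)/[4\Gamma(37/4)]=2^{23}/(3^{2}\cdot 5^{2}\cdot 11\cdot 13\cdot 17\cdot 29)=8388608/15862275$, matching exactly the constant and the $c^{-2}$ factor on the left-hand side of the stated identity.

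The substance of the proof lies in evaluating the remaining integral
\begin{equation*}
\frac{1}{c}\int_{0}^{1}\frac{(1-x^{4})^{8}}{c-(1-x^{4})^{4}}\,dx.
\end{equation*}
First I perform the polynomial division $y^{2}/(c-y)=-(c+y)+c^{2}/(c-y)$ with $y=(1-x^{4})^{4}$; the polynomial part integrates to $-c-\int_{0}^{1}(1-x^{4})^{4}\,dx = -c-\tfrac{2048}{3315}$, which supplies the opening terms $-\tfrac{2048}{3315}-c$ in the right-hand side. For the remaining proper rational part $c^{2}/(c-(1-x^{4})^{4})$, I factor the denominator in two stages:
\begin{equation*}
c-(1-x^{4})^{4} \;=\; -\prod_{j=0}^{3}\bigl[(1-x^{4})-i^{j}c^{1/4}\bigr] \;=\; -\prod_{j=0}^{3}\prod_{k=0}^{3}\bigl[x-i^{k}(1-i^{j}c^{1/4})^{1/4}\bigr],
\end{equation*}
obtaining sixteen simple poles. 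Partial fractions followed by $\int_{0}^{1}dx/(x-\alpha)=\log(1-\alpha)-\log(-\alpha)$ at each pole produces the sixteen principal logarithms $\log\bigl(1-i^{k}(1-i^{j}c^{1/4})^{1/4}\bigr)$ indexed by $j,k\in\{0,1,2,3\}$, which are precisely the log-terms displayed on the right-hand side; the auxiliary $\log(-\alpha)$ pieces collapse via $\log(-i^{k}\beta^{1/4})=\log(-i^{k})+\tfrac{1}{4}\log\beta$ into the subordinate $\log(1\pm c^{1/4})$ and $\log(1\pm ic^{1/4})$ factors visible in the numerator prefactors.

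The hard part is not conceptual but clerical: computing the sixteen partial-fraction residues of the degree-$32$-over-degree-$16$ rational function in closed form, fixing consistent branches of $c^{1/4}$, $(1\pm c^{1/4})^{1/4}$ and $(1\pm ic^{1/4})^{1/4}$, and collecting the $\log(-\alpha)$ remainders into the polynomial/logarithmic prefactors that precede the sixteen main logarithms. Once the branch system is fixed, the complex-conjugation symmetry $(j,k)\leftrightarrow(-j,-k)$ of the pole configuration pairs the $16$ terms into $8$ real combinations and reduces the verification to a finite, mechanical (if voluminous) algebraic check.
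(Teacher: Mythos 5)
Your proposal is correct and follows essentially the same route as the paper: the paper's proof consists precisely of stating the geometric-series identity $\sum_{n=2}^{\infty}(1-x^{4})^{4n}c^{-n}=(1-x^{4})^{8}/\bigl(c(c-(1-x^{4})^{4})\bigr)$ and integrating both sides over $[0,1]$. You have simply supplied the details the paper omits (the beta-integral and Gauss-multiplication bookkeeping yielding the $_{5}F_{4}$ and the constant $8388608/15862275$, and the partial-fraction evaluation of the right-hand integral), so there is nothing methodologically different to compare.
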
%

\begin{proof}

  Consider the appealing sum $\sum_{n=2}^{\infty}       \frac{(1-x^4)^{4n}}{c^n}        =       \frac{(-1 + x^4)^8}{c (-1 + c + 4 x^{4} - 6 x^{8} + 4 x^{12} - x^16)} $.
 Integrate both sides of this equation.

\end{proof}

  Consider integrating both sides of the above theorem.

\begin{eqnarray*}
\lefteqn{     _{6}   F _{5}    (1,1,\frac{9}{4},\frac{5}{2},\frac{11}{4},3;2,\frac{37}{16},\frac{41}{16},\frac{45}{16},\frac{49}{16};1)    =  }              \\   
  &  &       (     \frac{332799152   }{ 32967675 }      +   \frac{      (        (272 - 1923 i) (1 - i)^{\frac{1}{4}} + (272 + 1923 i) (1 + i)^{\frac{1}{4}} - 2^{\frac{1}{4}}  28976 ) \pi  }{53040 }           \\   
  &  &   +    \frac{(     (1923 + 272 i) (1 - i)^{\frac{1}{4}} + (1923 - 272 i) (1 + i)^{\frac{1}{4}} ) \ln 2 }{  26520 }     - (\frac{641}{1105} +  \frac{16 i}{195} ) (1 - i)^{\frac{1}{4}}         \\   
  &  &         \ln(-(1 - i)^{\frac{1}{4}}) + (\frac{16}{195} - \frac{641 i}{1105})    (1 - i)^{\frac{1}{4}}             \ln(-i (1 - i)^{\frac{1}{4}}) - (\frac{16}{195} - \frac{641 i}{1105})   (1 - i)^{\frac{1}{4}}         \\   
  &  &         \ln(i (1 - i)^{\frac{1}{4}}) - (\frac{641}{1105} -  \frac{16 i}{195}  )   (1 + i)^{\frac{1}{4}}           \ln(-(1 + i)^{\frac{1}{4}}) - (\frac{16}{195} + \frac{641 i}{1105})    (1 + i)^{\frac{1}{4}}         \\   
  &  &         \ln(-i (1 + i)^{\frac{1}{4}}) + (\frac{16}{195} + \frac{641 i}{1105}) (1 + i)^{\frac{1}{4}}              \ln(i (1 + i)^{\frac{1}{4}}) + (\frac{641}{1105} +  \frac{16 i}{195} ) (1 - i)^{\frac{1}{4}}         \\   
  &  &         \ln(1 - (1 - i)^{\frac{1}{4}}) - (\frac{16}{195} - \frac{641 i}{1105}) (1 - i)^{\frac{1}{4}}             \ln(1 - i (1 - i)^{\frac{1}{4}}) + (\frac{16}{195}     - \frac{641 i}{1105}) (1 - i)^{\frac{1}{4}}         \\   
  &  &         \ln(1 + i (1 - i)^{\frac{1}{4}}) - (\frac{641}{1105}   +  \frac{16 i}{195} ) (1 - i)^{\frac{1}{4}}          \ln(1 + (1 - i)^{\frac{1}{4}}) + (\frac{641}{1105} -  \frac{16 i}{195} ) (1 + i)^{\frac{1}{4}}         \\   
  &  &         \ln(1 - (1 + i)^{\frac{1}{4}}) + (\frac{16}{195} + \frac{641 i}{1105}) (1 + i)^{\frac{1}{4}}              \ln(1 - i (1 + i)^{\frac{1}{4}}) - (\frac{16}{195} + \frac{641 i}{1105}) (1 + i)^{\frac{1}{4}}         \\   
  &  &         \ln(1 + i (1 + i)^{\frac{1}{4}}) - (\frac{641}{1105} -  \frac{16 i}{195} ) (1 + i)^{\frac{1}{4}}            \ln(1 + (1 + i)^{\frac{1}{4}}) +  \frac{  1811 i 2^{\frac{1}{4}} \ln(1 - i 2^{\frac{1}{4}})}{  3315 }            \\   
  &  &    -      \frac{ 1811 i 2^{\frac{1}{4}} \ln(1 + i 2^{\frac{1}{4}}) }{ 3315}   +  \frac{ 1811 2^{\frac{1}{4}} \ln(-1 + 2^{\frac{1}{4}}) }{  3315 }          -    \frac{ 2^{\frac{1}{4}}   1811\ln(1 + 2^{\frac{1}{4}})}{3315}  )  \frac{15862275}{8388608}       
\end{eqnarray*}

  Have hypergeometric series such as the above one been evaluated, published, or widely recognized?

\begin{theorem}%
\begin{eqnarray*}
\lefteqn{    _{3} F _{2}         (\frac{3}{2},2,2;\frac{13}{8},\frac{17}{8};\frac{1}{c})            =          }           \\       
  &  &    1-\frac{45}{32}    (   -   (   \frac{1}{    2880 (1 - c)^{\frac{7}{4}}  }     )      (-2048 (1 - c)^{\frac{3}{4}} + 2048 (1 - c)^{\frac{3}{4}} c -            360 (1 - c)^{\frac{3}{4}} c^{2}      +       45 (1 + \sqrt{c})^{\frac{3}{4}} c^{\frac{3}{2}}      \\    
  &  &     (-4 - 3 \sqrt{c} + c) \ln(1 - (1 - \sqrt{c})^{\frac{1}{4}}) +      45 i (1 + \sqrt{c})^{\frac{3}{4}} c^{\frac{3}{2}}           (-4 - 3 \sqrt{c} + c)       \ln(1 - i (1 - \sqrt{c})^{\frac{1}{4}})            \\  
  &  &          +               180 i (1 + \sqrt{c})^{\frac{3}{4}}     c^{\frac{3}{2}}      \ln(1 + i (1 - \sqrt{c})^{\frac{1}{4}})     +    135 i (1 + \sqrt{c})^{\frac{3}{4}} c^2 \ln(1 + i (1 - \sqrt{c})^{\frac{1}{4}})         -        45 i (1 + \sqrt{c})^{\frac{3}{4}} c^{\frac{5}{2}}     \\  
  &  &             \ln(1 + i (1 - \sqrt{c})^{\frac{1}{4}}) +     180 (1 + \sqrt{c})^{\frac{3}{4}}     c^{\frac{3}{2}} \ln(1 + (1 - \sqrt{c})^{\frac{1}{4}}) +             135 (1 + \sqrt{c})^{\frac{3}{4}} c^{2} \ln(1 + (1 - \sqrt{c})^{\frac{1}{4}})     \\       
  &  &      -       45 (1 + \sqrt{c})^{\frac{3}{4}} c^{\frac{5}{2}} \ln(1 + (1 - \sqrt{c})^{\frac{1}{4}}) +             180 (1 - \sqrt{c})^{\frac{3}{4}} c^{\frac{3}{2}} \ln(1 - (1 + \sqrt{c})^{\frac{1}{4}}) -            135 (1 - \sqrt{c})^{\frac{3}{4}} c^{2}     \\  
  &  &     \ln(1 - (1 + \sqrt{c})^{\frac{1}{4}})         -     45 (1 - \sqrt{c})^{\frac{3}{4}} c^{\frac{5}{2}} \ln(1 - (1 + \sqrt{c})^{\frac{1}{4}}) +             180 i (1 - \sqrt{c})^{\frac{3}{4}} c^{\frac{3}{2}}              \ln(1 - i (1 + \sqrt{c})^{\frac{1}{4}}) -     \\       
  &  &          135 i (1 - \sqrt{c})^{\frac{3}{4}} c^{2} \ln(1 - i (1 + \sqrt{c})^{\frac{1}{4}}) -      45 i (1 - \sqrt{c})^{\frac{3}{4}} c^{\frac{5}{2}}           \ln(1 - i (1 + \sqrt{c})^{\frac{1}{4}})            -                     180 i (1 - \sqrt{c})^{\frac{3}{4}} c^{\frac{3}{2}}       \\ 
  &  &           \ln(1 + i (1 + \sqrt{c})^{\frac{1}{4}})           +                     135 i (1 - \sqrt{c})^{\frac{3}{4}} c^2 \ln(1 + i (1 + \sqrt{c})^{\frac{1}{4}}) +              45 i (1 - \sqrt{c})^{\frac{3}{4}} c^{\frac{5}{2}}                     \ln(1 + i (1 + \sqrt{c})^{\frac{1}{4}})          \\  
  &  &       -           180 (1 - \sqrt{c})^{\frac{3}{4}} c^{\frac{3}{2}} \ln(1 + (1 + \sqrt{c})^{\frac{1}{4}})       +      135 (1 - \sqrt{c})^{\frac{3}{4}} c^2 \ln(1 + (1 + \sqrt{c})^{\frac{1}{4}})      +       45 (1 - \sqrt{c})^{\frac{3}{4}} c^{\frac{5}{2}}           \\  
  &  &      \ln(1 + (1 + \sqrt{c})^{\frac{1}{4}})) +          (   \frac{1}{   256      (1 - c)^{\frac{7}{4}} }      )    c^{\frac{3}{2}} (4 (1 + \sqrt{c})^{\frac{3}{4}} (-4 - 3 \sqrt{c} + c) \ln(-(1 - \sqrt{c})^{\frac{1}{4}})                +            4 i (1 + \sqrt{c})^{\frac{3}{4}}         \\  
  &  &      (-4 - 3 \sqrt{c} + c) \ln(-i (1 - \sqrt{c})^{\frac{1}{4}}) +             16 i (1 + \sqrt{c})^{\frac{3}{4}} \ln(i (1 - \sqrt{c})^{\frac{1}{4}}) +                   12 i (1 + \sqrt{c})^{\frac{3}{4}} \sqrt{c} \ln(i (1 - \sqrt{c})^{\frac{1}{4}})         \\         
  &  &    -      4 i (1 + \sqrt{c})^{\frac{3}{4}} c \ln(i (1 - \sqrt{c})^{\frac{1}{4}}) +                     4 (1 + \sqrt{c})^{\frac{3}{4}} \ln(1 - \sqrt{c}) +          3 (1 + \sqrt{c})^{\frac{3}{4}} \sqrt{c} \ln(1 - \sqrt{c})         - (1 + \sqrt{c})^{\frac{3}{4}} c        \\  
  &  &     \ln(1 - \sqrt{c}) +           16 (1 - \sqrt{c})^{\frac{3}{4}} \ln(-(1 + \sqrt{c})^{\frac{1}{4}}) -      12 (1 - \sqrt{c})^{\frac{3}{4}} \sqrt{c} \ln(-(1 + \sqrt[c])^{\frac{1}{4}})           -                 4 (1 - \sqrt{c})^{\frac{3}{4}} c            \\  
  &  &     \ln(-(1 + \sqrt{c})^{\frac{1}{4}}) +           16 i (1 - \sqrt{c})^{\frac{3}{4}} \ln(-i (1 + \sqrt{c})^{\frac{1}{4}}) -     12 i (1 - \sqrt{c})^{\frac{3}{4}} \sqrt{c} \ln(-i (1 + \sqrt[c])^{\frac{1}{4}}) -           4 i (1 - \sqrt{c})^{\frac{3}{4}}        \\  
  &  &     c \ln(-i (1 + \sqrt{c})^{\frac{1}{4}}) -     16 i (1 - \sqrt{c})^{\frac{3}{4}} \ln(i (1 + \sqrt{c})^{\frac{1}{4}}) +      12 i (1 - \sqrt{c})^{\frac{3}{4}} \sqrt{c}       \ln(i (1 + \sqrt{c})^{\frac{1}{4}}) +            4 i (1 - \sqrt{c})^{\frac{3}{4}} c      \\  
  &  &       \ln(i (1 + \sqrt{c})^{\frac{1}{4}}) -     4 (1 - \sqrt{c})^{\frac{3}{4}} \ln(1 + \sqrt{c}) +     3 (1 - \sqrt{c})^{\frac{3}{4}} \sqrt{c} \ln(1 + \sqrt{c}) + (1 - \sqrt[c])^{\frac{3}{4}} c \ln(1 + \sqrt{c})))    
\end{eqnarray*}
\end{theorem}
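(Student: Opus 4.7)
The plan is to apply the HJ-algorithm to the series $\sum_{n\ge 1} n(1-x^4)^{2n}/c^n$, which is the weighting of $(1-x^4)^{2n}$ that produces the denominator parameters $13/8$ and $17/8$ after integration. First I would invoke the elementary identity $\sum_{n\ge 1} n y^n = y/(1-y)^2$ with $y=(1-x^4)^2/c$, obtaining the closed form
\begin{equation*}
\sum_{n=1}^{\infty} \frac{n\,(1-x^4)^{2n}}{c^n} = \frac{c\,(1-x^4)^2}{\bigl(c-(1-x^4)^2\bigr)^2}.
\end{equation*}

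Next I would integrate both sides over $[0,1]$. Termwise integration via (\ref{eq:notgeneral}) gives $\int_0^1 (1-x^4)^{2n}\,dx = \Gamma(1/4)\Gamma(2n+1)/(4\Gamma(2n+5/4))$. The Legendre duplication identities $(1)_{2n}=4^n(1/2)_n n!$ and $(5/4)_{2n}=4^n(5/8)_n(9/8)_n$ collapse this to $(1/2)_n n!/[(5/8)_n(9/8)_n]$. Re-indexing the summation by one unit and applying the Pochhammer shifts $(1/2)_{m+1}=\tfrac12(3/2)_m$, $(5/8)_{m+1}=\tfrac58(13/8)_m$, $(9/8)_{m+1}=\tfrac98(17/8)_m$, together with $(m+1)^2 m!=(2)_m^2/m!$, identifies the integrated left side with $\tfrac{32}{45c}\,{}_3F_2\bigl(\tfrac32,2,2;\tfrac{13}{8},\tfrac{17}{8};\tfrac{1}{c}\bigr)$. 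The stated shape $1 - \tfrac{45}{32}(\cdots)$ then follows by a routine algebraic rearrangement of the resulting identity.

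For the right-hand integral I would factor
\begin{equation*}
c-(1-x^4)^2 = (\sqrt{c}+1-x^4)(\sqrt{c}-1+x^4),
\end{equation*}
each factor being a quartic in $x$ whose four roots are the four fourth roots of unity times $(1\mp\sqrt{c})^{1/4}$. Partial-fraction decomposition over the eight simple roots of $c-(1-x^4)^2$, together with the residues arising from the squared denominator, produces precisely the combination of $\ln\bigl(1\pm\zeta(1\mp\sqrt c)^{1/4}\bigr)$-summands (for $\zeta\in\{1,i,-1,-i\}$) and $(1\pm\sqrt c)^{3/4}$ prefactors that appear on the right-hand side of the theorem. Evaluating the antiderivative at the endpoints $x=0$ and $x=1$ then yields the displayed expression.

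The main obstacle will be the right-hand integration, not the hypergeometric identification. Because the denominator is squared, each of the eight roots contributes both a logarithmic term and a rational boundary term, and one must make consistent branch choices for the fractional powers and complex logarithms. The apparent $(1-c)^{-7/4}$ singularities must also be checked to cancel (or to remain bounded) as $c\to 1$ after all contributions are summed. Organizing the resulting large number of elementary summands so that they match the paper's display is primarily a bookkeeping exercise.
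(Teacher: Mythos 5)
Your proposal is correct and follows essentially the same route as the paper, which simply states the closed form $\sum_{n=2}^{\infty} n(1-x^4)^{2n}/c^n = \frac{(-1+x^4)^4(-1+2c+2x^4-x^8)}{c(-1+c+2x^4-x^8)^2}$ and leaves the termwise beta-integral identification and the partial-fraction integration implicit. The only cosmetic difference is that the paper starts the sum at $n=2$, so that the excluded $n=1$ term $\tfrac{32}{45c}$ directly produces the leading ``$1$'' in the stated evaluation, whereas you start at $n=1$ and recover that constant by rearrangement; your Pochhammer bookkeeping ($(1)_{2n}=4^n(\tfrac12)_n n!$, $(\tfrac54)_{2n}=4^n(\tfrac58)_n(\tfrac98)_n$, and the shift to $\tfrac32,2,2;\tfrac{13}{8},\tfrac{17}{8}$) is exactly what the paper's one-line proof presupposes.
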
%

\begin{proof}

  Consider: $\sum_{n=2}^{\infty}      \frac{((1 - x^4)^{2 n}) n}{c^n}    =    \frac{(-1 + x^4)^{4} (-1 + 2 c + 2 x^{4} - x^{8})}{c (-1 + c + 2 x^{4} -    x^{8})^2} $.

\end{proof}

 Such elaborate generalized hypergeometric series will perhaps serve as a decent conclusion to this paper.

  In summary, the author has presented somewhat straightforward techniques to evaluate multifarious hypergeometric series, and has evaluated a variety of
 hypergeometric series (some of which could possibly be original), and series which cannot be expressed by a single hypergeometric series (some of which could
 possibly be original).

  Much thanks to my parents for all their support.

{\tt jmaxwellcampbell@gmail.com}

\end{document}